\newtheorem{theorem}{Theorem}[section]
\newtheorem{lemma}[theorem]{Lemma}
\newtheorem{corollary}[theorem]{Corollary}
\newtheorem{conjecture}[theorem]{Conjecture}
\newtheorem{proposition}[theorem]{Proposition}
\numberwithin{equation}{section}
\newcommand{\op}{\operatorname{Op}}
\newcommand{\pt}{\operatorname{PT}}
\newcommand{\ps}{\operatorname{PS}}
\newcommand{\qt}{\operatorname{QT}}
\newcommand{\qs}{\operatorname{QS}}
\newcommand{\id}{\operatorname{id}}
\newcommand{\sgn}{\operatorname{sgn}}
\newcommand{\rk}{\operatorname{rk}}
\newcommand{\mn}[1]{C_{n,#1}^{(d)}}
\newcommand{\br}{(k_1,\ldots,k_n)}
\newcommand{\mz}{\mathbb{Z}}
\newcommand{\asm}{\operatorname{ASM}}
\newcommand{\vs}{\operatorname{VSASM}}
\newcommand{\sym}{\operatorname{\mathbf{Sym}}}
\newcommand{\asym}{\operatorname{\mathbf{ASym}}}
\newcommand{\subfl}[2]{\begin{subarray}{l} #1 \\ #2 \end{subarray}}
\begin{document}

\title[Vertically symmetric alternating sign matrices]{Vertically symmetric alternating sign
matrices and a multivariate Laurent polynomial identity}

\author{Ilse Fischer \and Lukas Riegler}
\address{Universit\"at Wien, Fakult\"at f\"ur Mathematik, 
Oskar-Morgenstern-Platz 1, 1090 Wien, Austria}
\email{ilse.fischer@univie.ac.at, lukas.riegler@univie.ac.at}

\keywords{Alternating Sign Matrix, Monotone Triangle, VSASM}

\thanks{The authors acknowledge the support by the Austrian Science Foundation FWF, START grant Y463}

\begin{abstract} 
In 2007, the first author gave an alternative proof of the refined alternating sign matrix theorem by introducing a linear equation system that determines the refined ASM numbers uniquely. Computer experiments suggest that the numbers appearing in a conjecture concerning the number of  vertically symmetric alternating sign matrices with respect to the position of the first $1$ in the second row of the matrix establish the solution of a linear equation system similar to the one for the ordinary refined ASM numbers. In this paper we show how our attempt to prove this fact naturally leads to a more general conjectural multivariate Laurent polynomial identity. Remarkably, in contrast to the ordinary refined ASM numbers, we need to extend the combinatorial interpretation of the numbers to parameters which are not contained in the combinatorial admissible domain. Some partial results towards proving the conjectured multivariate Laurent polynomial identity and additional motivation why to study it are presented as well.
\end{abstract}

\maketitle

\section{Introduction}

An \emph{Alternating Sign Matrix} (ASM) is  a square matrix with entries in $\{0,1,-1\}$ where in each row and column the
non-zero entries alternate in sign and sum up to $1$. Combinatorialists are especially fond of these objects since they discovered that ASMs belong to the class of objects which possess a simple closed enumeration formula while at the same time no easy proof of this formula is known. Mills, Robbins and Rumsey \cite{MillsRobbinsRumseyASM} introduced ASMs in the course of generalizing the determinant and conjectured that the number of $n \times n$ ASMs is given by 
\begin{equation}
\label{asmnumbers}
\prod_{j=0}^{n-1} \frac{(3j+1)!}{(n+j)!}.
\end{equation}
More than ten years later, Zeilberger \cite{ZeilbergerASMProof} finally proved their conjecture. Soon after, Kuperberg \cite{KuperbergASMProof} gave another, shorter proof which makes use of a connection to statistical physics where ASMs have appeared before in an equivalent form as a  model for plane square ice (\emph{six vertex model}). Subsequently, it turned out that also many symmetry classes of ASMs can be enumerated by a simple product formula; a majority of the cases were dealt with in \cite{KuperbergSymmClasses}. A standard tool to prove these results are determinantal expressions for the partition function of the six vertex model. A beautiful account on the history of ASMs is provided by Bressoud \cite{BressoudPandC}.

Since an ASM has precisely one $1$ in its first row, it is natural to ask for the number of ASMs where this $1$ is in a prescribed column. Indeed, it turned out that also this refined enumeration leads to a simple product formula \cite{ZeilbergerRefinedASMProof}. Hence, it is also interesting to explore refined enumerations of symmetry classes of ASMs. The task of this paper is to present our attempt to prove the first author's conjecture \cite{FischerOpFormulaVSASM} on a refined enumeration of vertically symmetric alternating sign matrices. While
we are not yet able to complete our proof, we are able to show how it naturally leads to a conjecture on a much more general multivariate Laurent polynomial identity. Moreover, we present some partial results concerning this conjecture and additional motivation why it is interesting to study the conjecture.

A \emph{Vertically
Symmetric Alternating Sign Matrix} (VSASM) is an ASM which is invariant under
reflection with respect to the vertical symmetry axis. For instance, 
$$
\begin{pmatrix}
0 & 0 & 1 & 0 & 0 \\
1 & 0 & -1 & 0 & 1 \\
0 & 0 & 1 & 0 & 0 \\
0 & 1 & -1 & 1 & 0 \\
0 & 0 & 1 & 0 & 0 \\
\end{pmatrix}
$$
is a VSASM. Since the first row of an ASM contains a unique $1$, it follows that VSASMs can only exist
for odd dimensions. Moreover, the
alternating sign condition and symmetry imply that no $0$ can occur in
the middle column. Thus, the middle column of a VSASM has to be
$(1,-1,1,\ldots,-1,1)^T$.  The fact that the unique $1$ of the first row is always
in the middle column implies that the refined enumeration with respect to the first row is
trivial. However, it follows that the second row contains precisely two $1$s and one $-1$.
Therefore, a possible
refined enumeration of VSASMs is with respect to the unique $1$ in the second row that is situated
left of the middle column. Let $B_{n,i}$ denote the number of $(2n+1)\times
(2n+1)$-VSASMs where the first $1$ in the second row is in
column $i$. In \cite{FischerOpFormulaVSASM}, the first author conjectured that
\begin{equation}
\label{bniFormula}
B_{n,i}= \frac{\binom{2n+i-2}{2n-1} \binom{4n-i-1}{2n-1}}{\binom{4n-2}{2n-1}} 
 \prod_{j=1}^{n-1} \frac{(3j-1)(2j-1)!(6j-3)!}{(4j-2)!(4j-1)!}, \quad
i=1,\ldots,n.
\end{equation}
Let us remark that another possible refined enumeration is the one with respect to the first
column's unique $1$. Let $B_{n,i}^*$ denote the number of VSASMs of size
$2n+1$ where the first column's unique $1$ is located in row $i$. In \cite{RazStroVSASM},
A. Razumov and Y. Stroganov showed that 
\begin{equation}
\label{razstro}
B_{n,i}^* =  \prod_{j=1}^{n-1} \frac{(3j-1)(2j-1)!(6j-3)!}{(4j-2)!(4j-1)!} \sum_{r=1}^{i-1}
(-1)^{i+r-1}
\frac{\binom{2n+r-2}{2n-1} \binom{4n-r-1}{2n-1}}{\binom{4n-2}{2n-1}}, \quad i=1,\ldots,2n+1.
\end{equation}
Interestingly, the conjectured formula \eqref{bniFormula} would also imply a particularly simple
linear relation between the two refined enumerations, namely
\begin{equation*}
B_{n,i} = B_{n,i}^* + B_{n,i+1}^*, \quad i=1,\ldots,n.
\end{equation*}
To give a bijective proof of this relation is an open problem. Such a proof would also imply \eqref{bniFormula}.

Our approach is similar to the one used in the proof of the Refined Alternating Sign Matrix Theorem provided by the first author in \cite{FischerNewRefProof}. We summarize some relevant facts from there: Let $A_{n,i}$ denote the number of $n \times n$ ASMs where the unique $1$ in the first row is in column $i$. It was shown that 
$(A_{n,i})_{1 \le i \le n}$ is a solution of the following linear equation system (LES):
\begin{equation}
\label{LESord}
\begin{aligned}
A_{n,i} & = \sum_{j=i}^{n} \binom{2n-i-1}{j-i} (-1)^{j+n} A_{n,j},  &\quad i=1,\ldots,n, \\
A_{n,i} &= A_{n,n+1-i},  &\quad i=1,\ldots,n.
\end{aligned}
\end{equation}
Moreover it was proven that the solution space of this system is one-dimensional. The LES together with the recursion
\begin{equation}
\label{nminus}
A_{n,1} = \sum_{i=1}^{n-1} A_{n-1,i}
\end{equation}
enabled the first author to prove the formula for $A_{n,i}$ by induction with respect to $n$. 

The research presented in this paper started after observing that the numbers $B_{n,i}$ seem to be a solution of a similar LES: 
\begin{equation}
\label{LES}
\begin{aligned}
B_{n,n-i} & = \sum_{j=i}^{n-1} \binom{3n-i-2}{j-i} (-1)^{j+n+1} B_{n,n-j}, &\quad i=-n,-n+1,\ldots,n-1, \\
B_{n,n-i} & = B_{n,n+i+1},  &\quad i=-n,-n+1,\ldots,n-1.
\end{aligned}
\end{equation}
Here we have to be a bit more precise: $B_{n,i}$ is not yet defined if $i=n+1,n+2,\ldots,2n$. However, if we use for the moment 
\eqref{bniFormula} to define $B_{n,i}$ for all $i \in \mathbb{Z}$, basic hypergeometric manipulations (in fact, only the Chu-Vandermonde summation is involved) imply that 
$(B_{n,i})_{1 \le i \le 2n}$ is a solution of  \eqref{LES}; in Proposition~\ref{lemma:mni2UniqueLES} we show that the solution space of this LES is also one-dimensional. Coming back to the combinatorial definition of $B_{n,i}$, 
the goal of this paper is to show how to naturally extend the combinatorial interpretation of $B_{n,i}$ to $i=n+1,\ldots,2n$ and to present a conjecture of a completely different flavor which, once it is proven, implies that the numbers are a solution of the LES. The identity analogous to \eqref{nminus} is  
$$
B_{n,1} = \sum_{i=1}^{n-1} B_{n-1,i}.
$$
The Chu-Vandermonde summation implies that also the numbers on the right-hand side of \eqref{bniFormula} fulfill this identity, and, once the conjecture presented next is proven, \eqref{bniFormula} also follows by induction with respect to $n$.

In order to be able to formulate the conjecture, we recall that the unnormalized symmetrizer $\sym$ is defined as 
$
\sym p(x_1,\ldots,x_n) :=  \sum\limits_{\sigma \in {\mathcal S}_{n}} p(x_{\sigma(1)},\ldots,x_{\sigma(n)})
$.
\begin{conjecture}
\label{conj}
For integers $s,t \ge 0$, consider the following rational function in $z_1,\ldots,z_{s+t-1}$
$$
P_{s,t}(z_1,\ldots,z_{s+t-1}):= 
\prod_{i=1}^{s} z_i^{2s-2i-t+1} (1-z_i^{-1})^{i-1}
\prod_{i=s+1}^{s+t-1} z_i^{2i-2s-t} (1-z_i^{-1})^s \prod_{1 \le p < q \le s+t-1} 
\frac{1 - z_p + z_p z_q}{z_q - z_p}
$$
and let 
$R_{s,t}(z_1,\ldots,z_{s+t-1}) := \sym P_{s,t}(z_1,\ldots,z_{s+t-1})$.
If $s \le t$, then 
$$
R_{s,t}(z_1,\ldots,z_{s+t-1}) = R_{s,t}(z_1,\ldots,z_{i-1},z_i^{-1},z_{i+1},\ldots,z_{s+t-1})
$$
for all $i \in \{1,2,\ldots,s+t-1\}$.
 \end{conjecture}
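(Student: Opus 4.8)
The plan is to reduce the claimed invariance under $z_i \mapsto z_i^{-1}$ to a single ``master'' invariance, say under $z_1 \mapsto z_1^{-1}$, and then to attack that case by carefully tracking how the substitution acts on the summand $P_{s,t}$ after symmetrization. Since $R_{s,t}$ is symmetric in all of its variables, it suffices to prove the identity for one distinguished index, say $i=1$; the general case follows by conjugating with the transposition swapping $z_1$ and $z_i$, which commutes with $\sym$. So the whole problem is: show $\sym P_{s,t}(z_1,\dots,z_{s+t-1}) = \sym P_{s,t}(z_1^{-1},z_2,\dots,z_{s+t-1})$ when $s \le t$.

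The key step is to understand the product $\prod_{1 \le p < q} \frac{1 - z_p + z_p z_q}{z_q - z_p}$ under $z_1 \mapsto z_1^{-1}$. Writing $\Delta(z) = \prod_{p<q}(z_q - z_p)$ for the Vandermonde and $N(z) = \prod_{p<q}(1 - z_p + z_p z_q)$, I would compute separately how the factors involving $z_1$ transform: $z_q - z_1^{-1} = z_1^{-1}(z_1 z_q - 1)$ and $1 - z_1^{-1} + z_1^{-1}z_q = z_1^{-1}(z_1 - 1 + z_q)$. The crucial algebraic observation is the near-symmetry of the kernel: $1 - z_p + z_p z_q$ versus $1 - z_q + z_p z_q$ differ only by swapping $z_p \leftrightarrow z_q$ in the linear part, and $z_1 - 1 + z_q = -(1 - z_1 + \cdot) $-type rearrangements should let me rewrite $N(z_1^{-1}, z_2, \dots)$ as a monomial in $z_1$ (and the other $z_j$) times $N(z_1, z_2, \dots)$ up to factors that, crucially, are \emph{symmetric} in $z_2,\dots,z_{s+t-1}$ or otherwise survive the outer symmetrization. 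Combined with the monomial prefactors $z_i^{2s-2i-t+1}(1-z_i^{-1})^{i-1}$ and $z_i^{2i-2s-t}(1-z_i^{-1})^s$ — note $(1-z_1^{-1})^{i-1}$ transforms cleanly since $1 - z_1 = -z_1(1 - z_1^{-1})$ — the upshot should be that $P_{s,t}(z_1^{-1},z_2,\dots)$ equals $P_{s,t}(z_1,z_2,\dots)$ times an explicit rational function that is \emph{invariant under all permutations of $z_2,\dots,z_{s+t-1}$}, hence can be pulled out of the partial symmetrization over $\mathcal{S}_{s+t-1}$ acting on those coordinates — and then the condition $s \le t$ should force that pulled-out factor to be exactly $1$, or to telescope away after using the symmetry $R_{s,t}$ already has in $z_1$.

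In more detail, I would organize the symmetrizer as $\sym = \sum_{\sigma} \sigma$ and use that $\sym = \sym' \circ (\text{sum over transpositions moving the slot containing a fixed variable})$; the point is to isolate a ``one-variable'' identity. Concretely: fix the value $z_1$, let $A(z_1) := \sym_{z_2,\dots,z_{s+t-1}}[\text{the part of } P \text{ with } z_1 \text{ in the first slot}]$, and similarly cycle $z_1$ through each slot. One would then want a partial-fractions / residue argument in $z_1$: show that $R_{s,t}(z) - R_{s,t}(z_1^{-1},z_2,\dots)$ is a rational function of $z_1$ with controlled poles (only at $z_1 = z_j$ and $z_1 = 0, \infty$, with orders bounded using $s \le t$ to kill the behavior at $0$ and $\infty$) whose residues all vanish by the symmetry of $R_{s,t}$ in the remaining variables, forcing it to be identically $0$. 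The degree bookkeeping in $z_1$ at $0$ and $\infty$ is exactly where the hypothesis $s \le t$ must enter — with $s > t$ the Laurent degree in $z_1$ overflows and the argument (correctly) breaks.

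The main obstacle I anticipate is precisely this degree/pole analysis: after symmetrization the individual monomial degrees in $z_1$ from $P_{s,t}$ range over a window determined by $s$ and $t$, and showing that the \emph{symmetrized} expression $R_{s,t}$ — not just one term — has Laurent degree in each variable lying in an interval symmetric about $0$ (which is what ``invariance under $z_i \mapsto z_i^{-1}$'' morally amounts to, for a Laurent polynomial) requires either a clever rewriting of $R_{s,t}$ as a sum over a smaller index set with manifest symmetry, or an induction on $s+t$ peeling off one variable at a time. I expect the cleanest route is the latter: set up a recursion for $R_{s,t}$ in terms of $R_{s-1,t}$ and $R_{s,t-1}$ (or specializations like $z_{s+t-1} \to 1$, $\to \infty$), check the base cases $s=0$ and $s=1$ by hand, and propagate the $z_i \mapsto z_i^{-1}$ symmetry through the recursion, with the inequality $s \le t$ being exactly the condition under which the recursion stays within the symmetric regime. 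Even setting up such a recursion cleanly — matching the prefactor exponents $2s-2i-t+1$ and $2i-2s-t$ across the step — looks delicate, and is likely why the authors present this only as a conjecture.
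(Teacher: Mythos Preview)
The statement you are attempting is Conjecture~\ref{conj}, and the paper does \emph{not} prove it; it is explicitly left open. There is therefore no ``paper's own proof'' to compare against. What the paper does establish is the reduction in Theorem~\ref{inductionstep}: if the invariance under $z_i \mapsto z_i^{-1}$ holds in the diagonal cases $t=s$ and $t=s+1$, then it propagates to all $s \le t$. That reduction is carried out via recursion operators $\ps_{s,t}, \pt_{s,t}, \qs_{s,t}, \qt_{s,t}$ satisfying $P_{s,t} = \ps_{s,t}[P_{s-1,t}] = \pt_{s,t}[P_{s,t-1}]$, together with commutation relations (Lemma~\ref{commute}) organized through a labelled-lattice-path bookkeeping. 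Your final paragraph gestures toward precisely this kind of two-parameter recursion, so that instinct is aligned with the paper---but note that the paper's irreducible base cases are the diagonal $t \in \{s, s+1\}$ (still open), not $s \in \{0,1\}$ as you propose; even the case $s=0$ is not settled in full generality in Section~10.

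Your earlier approaches contain genuine gaps. The hope that $P_{s,t}(z_1^{-1}, z_2, \ldots)$ equals $P_{s,t}(z_1, z_2, \ldots)$ times a factor symmetric in $z_2, \ldots, z_{s+t-1}$ does not survive computation: under $z_1 \mapsto z_1^{-1}$ the factor $1 - z_1 + z_1 z_q$ becomes $z_1^{-1}(z_1 - 1 + z_q)$, and the ratio $(z_1 - 1 + z_q)/(1 - z_1 + z_1 z_q)$ depends on $z_q$ in an essential way---it is not a scalar that can be pulled through the symmetrizer. The invariance is a property of the full symmetrized sum, not of individual summands up to symmetric prefactors. The residue/degree approach is more promising in spirit but, as you outline it, circular: showing that the residues of $R_{s,t}(z) - R_{s,t}(z_1^{-1}, z_2, \ldots)$ at $z_1 = z_j$ vanish would require invoking exactly the lower-rank symmetry you are trying to prove, and the degree control at $0$ and $\infty$ needs a priori bounds on the Laurent degree of $R_{s,t}$ in $z_1$ that are not available without already knowing the answer. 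In short, your proposal correctly diagnoses the shape of the difficulty and converges on the same inductive architecture the paper uses for its partial result, but it does not supply the missing ingredient---and neither does the paper.
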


Note that in fact the following more general statement seems to be true: if $s \le t$, then 
$ R_{s,t}(z_1,\ldots,z_{s+t-1})$
is a linear combination of expressions of the form
$\prod\limits_{j=1}^{s+t-1} [(z_j -1)(1-z_j^{-1})]^{i_j}$, $i_j \ge 0$, where the coefficients are non-negative integers.
Moreover, it should be mentioned that it is easy to see that 
$R_{s,t}(z_1,\ldots,z_{s+t-1})$ is in fact a Laurent polynomial: Observe that 
$$
R_{s,t}(z_1,\ldots,z_{s+t-1}) =  
\frac{\asym \left(P_{s,t}(z_1,\ldots,z_{s+t-1}) \prod\limits_{1 \le i < j \le s+t-1} (z_j-z_i) \right)}{\prod\limits_{1 \le i < j \le s+t-1} (z_j - z_i)}
$$
with the unnormalized antisymmetrizer $\asym p(x_1,\ldots,x_n) := \sum\limits_{\sigma \in {\mathcal S}_{n}} \sgn \sigma \,p(x_{\sigma(1)},\ldots,x_{\sigma(n)})$. The assertion follows since $P_{s,t}(z_1,\ldots,z_{s+t-1}) \prod\limits_{1 \le i < j \le s+t-1} (z_j-z_i)$ is a Laurent polynomial and every antisymmetric Laurent polynomial is divisible by $\prod\limits_{1 \le i < j \le s+t-1} (z_j - z_i)$.

We will prove the following two theorems.

\begin{theorem} 
\label{implies}
Let $R_{s,t}(z_1,\ldots,z_{s+t-1})$ be as in Conjecture~\ref{conj}. If 
$$
R_{s,t}(z_1,\ldots,z_{s+t-1}) = R_{s,t}(z_1^{-1},\ldots,z_{s+t-1}^{-1})
$$
for all $1 \le s \le t$, then \eqref{bniFormula} is fulfilled.
\end{theorem}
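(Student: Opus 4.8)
The plan is to reduce the theorem to the one-dimensionality of the solution space of the LES~\eqref{LES}, established in Proposition~\ref{lemma:mni2UniqueLES}, together with the fact (noted in the excerpt via Chu--Vandermonde) that the right-hand side of~\eqref{bniFormula} satisfies~\eqref{LES} and the recursion $B_{n,1}=\sum_{i=1}^{n-1}B_{n-1,i}$. Concretely, it suffices to exhibit a combinatorial/algebraic extension of the quantities $B_{n,i}$ to indices $i=n+1,\ldots,2n$ --- presumably the extension alluded to just before the statement, given in terms of monotone triangles or of the operator formula of~\cite{FischerOpFormulaVSASM} --- and then to verify that the extended vector $(B_{n,i})_{1\le i\le 2n}$ satisfies the two families of relations in~\eqref{LES}. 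Once that is done, one-dimensionality forces $(B_{n,i})_{1\le i\le 2n}$ to be a scalar multiple of the vector given by~\eqref{bniFormula}; the recursion $B_{n,1}=\sum_{i=1}^{n-1}B_{n-1,i}$ (whose combinatorial validity comes from deleting the second row and first/last column of a VSASM, and which the product formula also satisfies) then pins down the scalar by induction on $n$, yielding~\eqref{bniFormula} for $i=1,\ldots,n$, which is all that is claimed.

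The role of the hypothesis on $R_{s,t}$ is to supply exactly the \emph{symmetry} half of~\eqref{LES}, i.e. the relations $B_{n,n-i}=B_{n,n+i+1}$. First I would set up the dictionary that expresses $B_{n,i}$ (in its extended range) as a specialization of the symmetrized rational function: writing $B_{n,i}$ via the operator/antisymmetrizer formula for the generating function of monotone triangles with the prescribed bottom data, one arrives --- after clearing the Vandermonde as in the displayed identity for $R_{s,t}$ --- at an expression of the shape $R_{s,t}(z_1,\ldots,z_{s+t-1})$ evaluated at all $z_j=1$ (or a derivative thereof), where $s,t$ are linear functions of $n$ and $i$ with $s+t-1 = $ (a suitable size parameter) and the constraint $s\le t$ corresponding precisely to $i\le n$. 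Under the substitution $z_j\mapsto z_j^{-1}$, the combinatorial parameters $(s,t)$ get swapped to $(t,s)$, so that the hypothesis $R_{s,t}=R_{s,t}(z_1^{-1},\ldots)$ translates into the identity relating the value at parameter $i$ to the value at the reflected parameter $2n+1-i$ --- which is the desired $B_{n,n-i}=B_{n,n+i+1}$. The \emph{first} family of relations in~\eqref{LES}, the alternating binomial sum, should on the other hand hold \emph{unconditionally} for the extended $B_{n,i}$: it is the analogue of the first line of~\eqref{LESord}, and in the ASM case that relation was proved in~\cite{FischerNewRefProof} directly from the monotone triangle recursion; here one expects the same mechanism (an inclusion--exclusion / finite-difference identity on the monotone-triangle generating function), independent of Conjecture~\ref{conj}.

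The main obstacle is the bookkeeping in the translation step: one must choose the extension of $B_{n,i}$ to $i>n$ so that \emph{simultaneously} (a) it agrees with the VSASM count for $i\le n$, (b) it satisfies the unconditional binomial-sum relations, and (c) the $z_j\mapsto z_j^{-1}$ symmetry of $R_{s,t}$ maps exactly onto the reflection $i\mapsto 2n+1-i$. Getting the exponents in $P_{s,t}$ to line up with the exponents coming from the monotone-triangle operator formula --- in particular matching the asymmetric roles of the first $s$ and the last $t-1$ variables with the two ``sides'' of the monotone triangle relative to the forced middle column of the VSASM --- is where the real work lies, and it is essentially the content of the later sections of the paper. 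Everything after that (checking~\eqref{LES} termwise, invoking Proposition~\ref{lemma:mni2UniqueLES}, and running the induction on $n$ with the Chu--Vandermonde identity for the base relation $B_{n,1}=\sum B_{n-1,i}$) is routine.
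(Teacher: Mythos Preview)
Your high-level strategy is right and matches the paper: extend $B_{n,i}$ to $i=n+1,\dots,2n$, verify~\eqref{LES}, invoke Proposition~\ref{lemma:mni2UniqueLES}, and finish by induction on $n$ using the recursion $B_{n,1}=\sum_{i}B_{n-1,i}$. But you have the two halves of~\eqref{LES} assigned to the wrong mechanisms, and this matters for how the hypothesis actually enters.

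In the paper, the \emph{symmetry} $C^{(2)}_{n,i}=C^{(2)}_{n,-i-1}$ (i.e.\ $B_{n,n-i}=B_{n,n+i+1}$) is proved \emph{unconditionally}: with the choice $x_j=-2j+1$ for the constants in the inverse difference operators, Proposition~\ref{prop:mniSymmetry} gives it directly, with no appeal to Conjecture~\ref{conj}. What does \emph{not} hold unconditionally is the binomial-sum relation for negative $i$. Proposition~\ref{prop:cniLES} only gives $C^{(2)}_{n,i}=\sum_j \binom{3n-i-2}{j-i}(-1)^{j+n+1} D^{(2)}_{n,j}$, where $D^{(2)}_{n,i}$ is defined with a \emph{different} sequence of constants $z_j=3n+j+1$; thus the first line of~\eqref{LES} holds only once one proves $C^{(2)}_{n,i}=D^{(2)}_{n,i}$ for $i<0$, and this is precisely where the hypothesis on $R_{s,t}$ is consumed. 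The route is: the inversion symmetry of $R_{s,t}$ implies (via Lemmas~\ref{lemma:operator} and~\ref{lemma:laurentToOp}) the operator identity in Conjecture~\ref{prop:somuchfun}, which by Proposition~\ref{cor:mni2cni2} yields $C^{(2)}_{n,i}=D^{(2)}_{n,i}$. In particular, the hypothesis is used as an \emph{operator vanishing} statement ($\asym\op=0$ applied to an antisymmetric polynomial), not as a specialization of $R_{s,t}$ at $z_j=1$; and the substitution $z_j\mapsto z_j^{-1}$ does \emph{not} swap $(s,t)\mapsto(t,s)$ --- both sides of the operator identity carry the same $(s,t)$, and the two terms of $\op$ arise from the $\delta$-form and the $\Delta$-form of the same $P_{s,t}$. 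Your proposed dictionary would therefore not produce the needed identity.
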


\begin{theorem}
\label{inductionstep}
Let $R_{s,t}(z_1,\ldots,z_{s+t-1})$ be as in Conjecture~\ref{conj}. Suppose
\begin{equation}
\label{allinverse}
R_{s,t}(z_1,\ldots,z_{s+t-1}) = R_{s,t}(z_1^{-1},\ldots,z_{s+t-1}^{-1})
\end{equation}
if $t=s$ and $t=s+1$, $s \ge 1$. Then 
\eqref{allinverse} holds for all $s,t$ with $1 \le s \le t$. 
\end{theorem}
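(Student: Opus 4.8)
The plan is to induct on $t$ for fixed $s$, using the hypothesis to establish the base cases $t=s$ and $t=s+1$, and to provide an induction step that expresses $R_{s,t}$ in terms of $R_{s,t-1}$ and $R_{s,t-2}$ (or more generally in terms of $R_{s,t'}$ with $s\le t'<t$). The key is to find a linear recurrence, with coefficients independent of the $z_j$, that is compatible with the symmetry $z_j\mapsto z_j^{-1}$. First I would look for such a recurrence by analyzing how $P_{s,t}$ depends on $t$: the exponents $2s-2i-t+1$ (for $i\le s$) and $2i-2s-t$ (for $i>s$) each decrease by one when $t$ increases by one, so $P_{s,t+1}$ and $P_{s,t}$ are related by multiplying by $\prod_{i} z_i^{-1}$ together with an extra variable $z_{s+t}$ carrying the factor $z_{s+t}^{t-1}(1-z_{s+t}^{-1})^s$ and the new Cauchy-type factors $\prod_{p<s+t}\frac{1-z_p+z_p z_{s+t}}{z_{s+t}-z_p}$. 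The aim is to integrate out (or take a suitable residue/coefficient in) the last variable $z_{s+t}$ after symmetrizing, producing a relation among the $R_{s,\bullet}$.

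The cleanest route is probably the following. Write $R_{s,t}$ via the antisymmetrizer as in the excerpt, so $R_{s,t}=\asym\!\big(P_{s,t}\prod_{i<j}(z_j-z_i)\big)\big/\prod_{i<j}(z_j-z_i)$; the numerator $N_{s,t}:=\asym\!\big(P_{s,t}\prod_{i<j}(z_j-z_i)\big)$ is an antisymmetric Laurent polynomial, hence a $\mathbb Z$-linear combination of Schur-like ``bialternants'' $\det(z_i^{\lambda_j})$. I would compute, for the last variable $z_{s+t-1}=:w$, the set of exponents of $w$ occurring in $P_{s,t}\prod_{i<j}(z_j-z_i)$ and show the span over all orderings is an interval of length $s+t-1$; then the coefficient extraction in $w$ of the bialternant expansion, summed against binomial coefficients coming from $(1-w^{-1})^s$ and the geometric-series expansion of $\prod_p\frac{1-z_p+z_p w}{w-z_p}$, collapses to a finite linear combination of the $N_{s,t'}$ for $t'<t$ with \emph{constant} (binomial) coefficients. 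Dividing back by the Vandermonde gives the desired recurrence $R_{s,t}=\sum_{t'} c_{t,t'}\,R_{s,t'}$.

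Once such a recurrence is in hand, the theorem follows formally: the operator $\iota:f(z_1,\dots,z_{s+t-1})\mapsto f(z_1^{-1},\dots,z_{s+t-1}^{-1})$ commutes with $\sym$ (it just permutes terms within each symmetrized sum and acts on the rational function $P_{s,t}$), and if the recurrence has coefficients independent of the $z_j$ then $\iota R_{s,t}-R_{s,t}=\sum_{t'}c_{t,t'}(\iota R_{s,t'}-R_{s,t'})$; by the inductive hypothesis the right-hand side vanishes, so $\iota R_{s,t}=R_{s,t}$. The induction is anchored at $t=s$ and $t=s+1$ by the hypothesis \eqref{allinverse}, so every $t\ge s$ is covered. (One must check the recurrence is valid for all $t\ge s+2$, i.e.\ that no degenerate boundary behaviour occurs precisely when $s\le t$; this is where the hypothesis $s\le t$ of Conjecture~\ref{conj} should enter, presumably because for $t<s$ the relevant exponent intervals overlap differently and the collapse fails.)

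The main obstacle I anticipate is establishing the recurrence itself with \emph{$z$-independent} coefficients: the naive coefficient extraction in $w$ produces coefficients that are polynomials in the remaining variables $z_1,\dots,z_{s+t-2}$ (coming from $\prod_p(1-z_p+z_p w)$ and from $\prod_p(w-z_p)^{-1}$), and one must show that after antisymmetrization and re-division by the Vandermonde these reorganize into $P_{s,t'}$-type expressions with purely numerical multipliers. Controlling this will likely require a careful bookkeeping of which monomials in $w$ survive antisymmetrization — equivalently, identifying the bialternant basis elements that appear — and matching them term-by-term against the $t'<t$ cases; I would expect this to be the technical heart of the argument and the step most likely to need the full strength of the structural description (the conjectured expansion in $\prod_j[(z_j-1)(1-z_j^{-1})]^{i_j}$) as a guide, even if that description is not itself invoked.
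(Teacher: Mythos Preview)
The gap you flag in your final paragraph is real and, I do not see how to close it along the line you sketch. A relation $R_{s,t}=\sum_{t'<t}c_{t,t'}\,R_{s,t'}$ with scalar $c_{t,t'}$ does not even typecheck: $R_{s,t}$ is a Laurent polynomial in $s+t-1$ variables while $R_{s,t'}$ lives in $s+t'-1$ variables, so one must specialize or eliminate variables on the right, and any honest way of doing so (residues, coefficient extraction in $w$, setting variables to $1$) produces coefficients that depend on the surviving $z_j$. Once the coefficients are $z$-dependent, your inheritance step ``$\iota R_{s,t}-R_{s,t}=\sum c_{t,t'}(\iota R_{s,t'}-R_{s,t'})$'' breaks down, because $\iota$ acts nontrivially on the coefficients as well as on the arguments. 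Your observation that $\iota$ commutes with $\sym$ is correct, but it does not help here.

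The paper accepts $z$-dependent coefficients and supplies the missing structural ingredient: a commutation relation between the $P$-step and the $Q$-step. Set $\pt_{s,t}[f]:=T_{s,t}(z_{s+t-1};z_1,\ldots,z_{s+t-2})\,f(z_1,\ldots,z_{s+t-2})$ with $T_{s,t}(z;\cdot)=(1-z^{-1})^s z^{t-2}\prod_i\frac{1-z_i+z_iz}{(z-z_i)z_i}$, so that $P_{s,t}=\pt_{s,t}[P_{s,t-1}]$; let $\qt_{s,t}$ be the companion operator with every variable inverted, so that $Q_{s,t}:=P_{s,t}(z_{s+t-1}^{-1},\ldots,z_1^{-1})=\qt_{s,t}[Q_{s,t-1}]$. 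Two elementary facts do all the work: (i) because $T_{s,t}(z;\cdot)$ is symmetric in its last $s+t-2$ arguments, $\sym\pt_{s,t}[f]$ and $\sym\qt_{s,t}[f]$ depend only on $\sym f$; (ii) one checks directly that $\pt_{s,t}\circ\qt_{s,t-1}=\qt_{s,t}\circ\pt_{s,t-1}$. By (i) together with the induction hypothesis, $\sym F_w$ for a word $w$ in $\{\pt,\qt\}$ depends only on the \emph{last} letter of $w$; by (ii) one may swap the last two letters without changing $F_w$, so $\sym F_w$ is independent of that last letter as well. Hence $\sym P_{s,t}=\sym Q_{s,t}$ whenever $t\ge s+2$. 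Analogous operators $\ps,\qs$ decreasing $s$ (with the commutations $\ps\circ\pt=\pt\circ\ps$ and $\qs\circ\qt=\qt\circ\qs$) are introduced so that every intermediate word stays in the region $s\le t$ and so that the induction connects to the assumed base cases $t=s$ and $t=s+1$; the paper packages the argument as the statement that $\sym F_{w_1}=\sym F_{w_2}$ for any two Dyck prefixes $w_1,w_2$ over $\{\ps,\pt,\qs,\qt\}$ with the same endpoint. The idea your outline lacks is this commutation between the $P$-type and $Q$-type recursions, not a scalar recurrence.
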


While we believe that \eqref{bniFormula} should probably be attacked with the six vertex model approach (although we have not tried), we also think that the more general Conjecture~\ref{conj} is interesting in its own right, given the fact that it only involves very elementary mathematical objects such as rational functions and the symmetric group.

The paper is organized as follows. 
We start by showing that the solution space of \eqref{LES} is one-dimensional. 
Then we provide a first expression for $B_{n,i}$ and present linear equation systems that generalize 
the system in the first line of \eqref{LESord} and the system in the first line of \eqref{LES} when restricting to non-negative 
$i$ in the latter. Next we use the expression for $B_{n,i}$ to extend the combinatorial interpretation to $i=n+1,n+2,\ldots,2n$ and also extend the linear equation system to negative integers $i$ accordingly. In Section~\ref{seq}, we justify the choice of certain constants that are involved in this extension.
Afterwards we present a first conjecture implying \eqref{bniFormula}. Finally, we are able to prove Theorem~\ref{implies}. The proof of Theorem~\ref{inductionstep} is given in Section~\ref{Theorem2}. It is independent of the rest of the paper and, at least for our taste, quite elegant.  We would love to see a proof of Conjecture~\ref{conj} which is possibly along these lines. 
We conclude with some remarks concerning the special $s=0$ in Conjecture~\ref{conj}, also providing additional motivation why it is of interest to study these symmetrized functions.

\section{The solution space of \texorpdfstring{\eqref{LES}}{(1.6)} is one-dimensional}
\label{dim}

The goal of this section is the proof of the proposition below. Let us remark that we use the following extension of the
binomial coefficient in this paper 
\begin{equation}
\label{bcPolExt}
\binom{x}{j}:=
\begin{cases} 
\frac{x(x-1)\cdots(x-j+1)}{j!}  & \text{if } j \ge 0, \\
0 & \text{if } j < 0,
\end{cases}
\end{equation}
where $x \in \mathbb{C}$ and $j \in \mathbb{Z}$.

\begin{proposition}
\label{lemma:mni2UniqueLES}
For fixed $n \geq 1$, the solution space of the following LES 
\begin{align*}
Y_{n,i} &= \sum_{j=i}^{n-1} \binom{3n-i-2}{j-i} (-1)^{j+n+1} Y_{n,j}, &i=-n,-n+1,\ldots,n-1, \\
Y_{n,i} &= Y_{n,-i-1}, &i=-n,-n+1\ldots,n-1, \\
\end{align*}
in the variables $(Y_{n,i})_{-n \le i \le n-1}$ is one-dimensional.
\end{proposition}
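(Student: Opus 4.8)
The plan is first to note that the solution space is nonzero, and then that it has dimension at most one. It is nonzero because, by the discussion in the Introduction, the vector $Y_{n,i}:=B_{n,n-i}$ with $B$ given by the right‑hand side of \eqref{bniFormula} (extended to all integer arguments) solves both families: the first by a Chu–Vandermonde computation, the second because $B_{n,n-i}$, as a polynomial in $i$, is invariant under $i\mapsto -i-1$.

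For the upper bound I would write the first family as $(I-M)Y=0$ with $M_{ij}=\binom{3n-i-2}{j-i}(-1)^{j+n+1}$ for $-n\le i,j\le n-1$. Since $\binom{x}{k}=0$ for $k<0$, the matrix $M$ is upper triangular with $M_{ii}=(-1)^{i+n+1}$, so $(I-M)_{ii}=1-(-1)^{i+n+1}$ equals $0$ when $i\not\equiv n\pmod 2$ and $2$ otherwise. Call the $n$ indices with $i\equiv n\pmod 2$ \emph{pivotal} and the other $n$ ones \emph{non-pivotal}. For pivotal $i$ the equation reads $2Y_{n,i}=\sum_{j=i+1}^{n-1}\binom{3n-i-2}{j-i}(-1)^{j+n+1}Y_{n,j}$, so reading the pivotal equations from $i=n-2$ downward expresses, by downward induction, every pivotal variable as an explicit linear combination of non-pivotal variables of strictly larger index. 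Now bring in the symmetry: the involution $i\mapsto -i-1$ is fixed-point-free on $\{-n,\dots,n-1\}$ and reverses parity, hence gives exactly $n$ distinct relations $Y_{n,i}=Y_{n,-i-1}$, each equating a pivotal variable with a non-pivotal one. Substituting the back-substitution formulas turns these into $n$ linear relations among the $n$ non-pivotal unknowns, and the proposition reduces to the claim that this $n\times n$ system has rank $n-1$; it is $\le n-1$ because the nonzero solution above exists.

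To obtain rank $\ge n-1$, order the pivotal indices decreasingly as $p_k=n-2k$; then the non-pivotal indices are $i_k=-p_k-1=-n+2k-1$, increasing in $k$. Since $Y_{n,p_k}$ has been written in terms of the non-pivotal variables of index exceeding $p_k$, namely $Y_{n,i_{n-k+1}},\dots,Y_{n,i_n}$, the relation attached to $p_k$ is $Y_{n,i_k}=(\text{combination of }Y_{n,i_{n-k+1}},\dots,Y_{n,i_n})$, and inserting the outcomes of the relations for $p_1,\dots,p_{k-1}$ rewrites it as $(1-c_k)\,Y_{n,i_k}=(\text{combination of }Y_{n,i_{k+1}},\dots,Y_{n,i_n})$. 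If $1-c_k\ne 0$ for $k=1,\dots,n-1$, these $n-1$ relations are triangular, hence independent, and the proof is complete. Verifying the non-vanishing of these $c_k$ (and, implicitly, of the leading coefficients built into the back-substitution) is the main obstacle: it amounts to evaluating explicit binomial-coefficient sums, and this is where the hypergeometric work — Chu–Vandermonde once more — is concentrated; the pivotal/non-pivotal triangular structure itself comes for free from the shape of the binomial coefficients. (One may alternatively record that $(I-M)Y=0$ is equivalent to $p(i)+\Delta^{3n-i-2}p(i)=0$ for $-n\le i\le n-1$, where $\Delta$ is the forward difference and $p$ is the polynomial of degree $\le 4n-2$ interpolating the $Y_{n,i}$ together with the values $0$ at $i=n,\dots,3n-2$, but using the symmetry to bring this down to a one-parameter family still requires the same binomial input.)
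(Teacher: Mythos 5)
Your lower bound and your overall reduction are fine: discarding the non-pivotal equations of the first family only enlarges the solution space, so it is legitimate to bound the dimension of the subsystem consisting of the pivotal equations plus the symmetry relations, and your bookkeeping with the indices $p_k=n-2k$, $i_k=-p_k-1$ does set up a Gaussian elimination whose success hinges on the pivots $1-c_k$ being nonzero for $k=1,\dots,n-1$. The problem is that this non-vanishing \emph{is} the proposition -- everything before it is linear-algebra scaffolding that would work for any matrix of this triangular shape -- and you do not prove it. Asserting that it "amounts to evaluating explicit binomial-coefficient sums" via "Chu--Vandermonde once more" is not substantiated and almost certainly underestimates the difficulty: in the paper the analogous non-degeneracy is reduced to the non-vanishing of the single determinant $\det\bigl(\binom{4n-i-1}{2n-i-j+1}(-1)^{j}+\delta_{i,j}\bigr)_{2\le i,j\le 2n}$, which is then identified (via \cite{FischerNewRefProof}) with the Andrews determinant $\det\bigl(\binom{i+j}{j-1}+\delta_{i,j}\bigr)$ counting descending plane partitions -- a deep determinant evaluation equivalent in weight to the ASM product formula \eqref{asmnumbers}, not a routine hypergeometric summation.

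There is also a structural reason to doubt that your route is easier than the paper's: you need \emph{all} of the first $n-1$ leading principal minors (in your particular ordering of the non-pivotal variables) to be nonzero, whereas rank $n-1$ only requires \emph{some} $(n-1)\times(n-1)$ minor to be nonzero; the paper needs exactly one such determinant and has the luxury of recognizing it as a known positive quantity. Unless you can either evaluate each $c_k$ in closed form and check $c_k\neq 1$, or reorganize the elimination so that the single surviving minor is one you can identify with a known nonzero determinant (which is, in effect, what the paper does after substituting the symmetry into the first family and passing to the $1$-eigenspace of $\bigl(\binom{3n-i-2}{-j-i-1}(-1)^{j+n}\bigr)$), the argument is incomplete at its only essential step.
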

\begin{proof}
As mentioned before, the numbers on the right-hand side of \eqref{bniFormula} are defined for all $i \in \mathbb{Z}$ and establish a solution after replacing $i$ by $n-i$. 
This implies that the solution space is at least one-dimensional. Since
\[
Y_{n,i} = \sum_{j=-n}^{n-1} \binom{3n-i-2}{j-i} (-1)^{j+n+1} Y_{n,-j-1} = \sum_{j=-n}^{n-1} \binom{3n-i-2}{-j-i-1} (-1)^{j+n} Y_{n,j} 
\]
it suffices to show that the $1$-eigenspace of
\[
\left( \binom{3n-i-2}{-j-i-1}(-1)^{j+n} \right)_{-n \leq i,j \leq n-1}
\]
is $1$-dimensional. So, we have to show that
\[
\rk\left( \binom{4n-i-1}{2n-i-j+1}(-1)^{j+1} -\delta_{i,j} \right)_{1 \leq i,j \leq 2n} = 2n-1.
\]
After removing the first row and column and multiplying each row with $-1$, we are done as soon as we show that 
\[
\det \left( \binom{4n-i-1}{2n-i-j+1}(-1)^{j}+\delta_{i,j} \right)_{2 \leq i,j \leq 2n} \neq 0.
\]
If $n=1$, this can be checked directly. Otherwise, 
it was shown in \cite[p.$262$]{FischerNewRefProof} that 
\[
\det \left( \binom{2m-i-1}{m-i-j+1}(-1)^{j}+\delta_{i,j} \right)_{2 \leq i,j \leq m} = \det \left( \binom{i+j}{j-1}+\delta_{i,j} \right)_{1 \leq i,j \leq m-2}
\]
when $m \ge 3$, whereby the last determinant counts descending plane partitions with no part greater than $m-1$, see \cite{AndrewsMacdonald}. However, this number is given by \eqref{asmnumbers} if we set $n=m-1$ there.
\end{proof}

\section{Monotone triangles and an expression for \texorpdfstring{$B_{n,i}$}{B\_\{n,i\}}}
\label{bniexpr}

A \emph{Monotone Triangle} (MT) of size $n$ is
a triangular array of integers $(a_{i,j})_{1 \leq j \leq i \leq n}$, often arranged as follows
\[
\begin{array}{ccccccc}
&&& a_{1,1} \\
&& a_{2,1} && a_{2,2} \\
& \rotatebox{75}{$\ddots$} &&&& \ddots \\
a_{n,1} &&\cdots&  &\cdots&& a_{n,n}
\end{array},
\]
with strict increase along rows, i.e. $a_{i,j} < a_{i,j+1}$, and weak increase along North-East- and
South-East-diagonals, i.e. $a_{i+1,j} \leq a_{i,j} \leq a_{i+1,j+1}$. It is
well-known \cite{MillsRobbinsRumseyASM} that MTs with $n$ rows and bottom row
$(1,2\ldots,n)$ are in one-to-one correspondence with ASMs of size $n$: the
$i$-th row of the MT contains an entry $j$ if the first $i$ rows of the $j$-th column in the corresponding ASM sum up to $1$. 

In order to see that $(2n+1) \times (2n+1)$ VSASMs correspond to MTs with bottom row 
$(2,4,\ldots,2n)$, rotate the VSASM by $90$ degrees. The $(n+1)$-st row
of the rotated VSASM is $(1,-1,1,\ldots,-1,1)$. From the
definition of ASMs, it follows that the vector of partial column sums of the
first $n$ rows is $(0,1,0,\ldots,1,0)$ in this case, i.e. the $n$-th row of the corresponding MT
is $(2,4,\ldots,2n)$. Since the rotated VSASM
is uniquely determined by its first $n$ rows, this establishes a one-to-one
correspondence between VSASMs of size $2n+1$ and MTs with bottom row
$(2,4,\ldots,2n)$. An example of the upper part of a rotated VSASM and its corresponding MT is depicted in Figure \ref{vsasm-stats}.

The refined enumeration of VSASMs directly translates into a refined enumeration of MTs with bottom row $(2,4,\ldots,2n)$: from the
correspondence it follows that $B_{n,i}$ counts MTs
with bottom row $(2,4,\ldots,2n)$ and exactly $n+1-i$ entries equal to $2$ in the
left-most North-East-diagonal (see Figure \ref{vsasm-stats}).
\begin{figure}
\begin{center}
$
\begin{array}{ccc}
\begin{pmatrix}
0 & 0 & 0 & 1 & 0 & 0 & 0 & 0 & 0 \\
0 & \boldsymbol{1} & 0 & -1 & 0 & 1 & 0 & 0 & 0 \\
0 & 0 & 0 & 0 & 0 & 0 & 1 & 0 & 0 \\
0 & 0 & 0 & 1 & 0 & 0 & -1 & 1 & 0 \\
1 & -1 & 1 & -1 & 1 & -1 & 1 & -1 & 1 \\
\end{pmatrix}
&
\Leftrightarrow
&
\begin{array}{ccccccccccc}
&&& 4 \\
&& \boldsymbol{2} && 6 \\
& \boldsymbol{2} && 6 && 7 \\
 \boldsymbol{2} && 4 && 6 && 8 \\
\end{array}
\end{array}
$
\end{center}
\caption{Upper part of a rotated VSASM and its corresponding Monotone
Triangle.\label{vsasm-stats}}
\end{figure}

The problem of counting MTs with fixed bottom row $(k_1,\ldots,k_n)$ was considered in \cite{FischerNumberOfMT}. For each $n\geq 1$, an explicit polynomial $\alpha(n;k_1,\ldots,k_n)$ of degree $n-1$ in each of the $n$ variables $k_1,\ldots,k_n$ was provided such that the evaluation at strictly increasing integers $k_1 < k_2 < \cdots < k_n$ is equal to the number of MTs with fixed bottom row $(k_1,\ldots,k_n)$ -- for instance $\alpha(3;1,2,3)=7$.  In \cite{FischerTopBottom}, it was described how to use the polynomial $\alpha(n;k_1,\ldots,k_n)$ to compute the number of MTs with given bottom row and a certain number of fixed entries in the left-most NE-diagonal:
Let 
\begin{align*}
E_x f(x) &:= f(x+1), \\
\Delta_x f(x) &:= (E_x - \id) f(x) = f(x+1)-f(x), \\
\delta_x f(x) &:= (\id - E_x^{-1}) f(x) = f(x)-f(x-1)
\end{align*}
denote the shift operator and the difference operators.
Suppose  $k_1 \le  k_2 < \cdots < k_n$ and $i \ge 0$, then 
$$(-1)^i \Delta_{k_1}^i \alpha(n;k_1,\ldots,k_n)$$ 
is the number of MTs with bottom
row $(k_1-1,k_2,\ldots,k_n)$ where precisely $i+1$ entries in the left-most
NE-diagonal are equal to $k_1-1$ (see Figure \ref{fig:DeltaiK1}). There exists an analogous result for the 
right-most SE-diagonal: if $k_1 <  \cdots < k_{n-1} \le k_n$, then
$$\delta_{k_n}^i \alpha(n;k_1,\ldots,k_n)$$ 
is the number of MTs where precisely
$i+1$ entries in the right-most SE-diagonal are equal to $k_n+1$ (see Figure
\ref{fig:deltaiKn}).
\begin{figure}[ht]
\begin{minipage}[b]{0.45\linewidth}
\centering
\includegraphics[width=\textwidth]{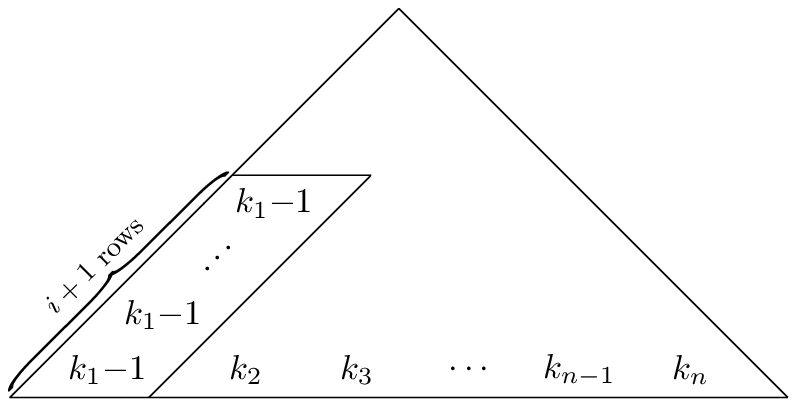}
\caption{$(-1)^i \Delta_{k_1}^i \alpha(n;k_1,\ldots,k_n)$\label{fig:DeltaiK1}}
\end{minipage}
\hspace{0.5cm}
\begin{minipage}[b]{0.45\linewidth}
\centering
\includegraphics[width=\textwidth]{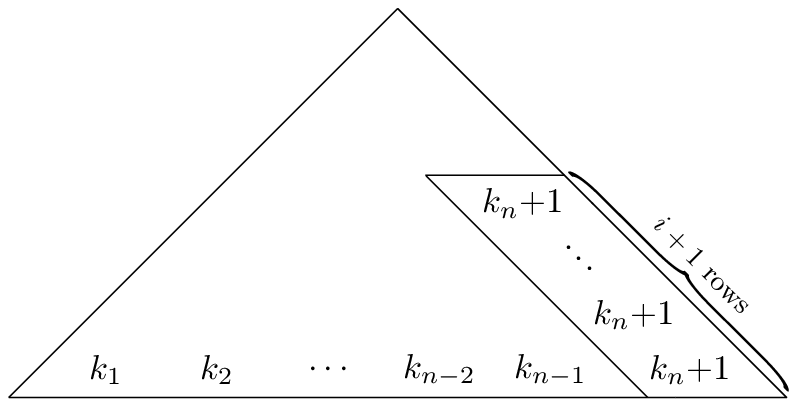}
\caption{$\delta_{k_n}^i \alpha(n;k_1,\ldots,k_n)$\label{fig:deltaiKn}}
\end{minipage}
\end{figure}

This implies the following formula
$$
B_{n,n-i}=(-1)^{i} \Delta_{k_1}^{i} \alpha(n; k_1,4,6,\ldots,2n)|_{k_1=3}.
$$
Let us generalize this by defining
\[
\mn{i}:=(-1)^{i}\Delta_{k_1}^{i}
\alpha(n;k_1,2d,3d,\ldots,nd)|_{k_1=d+1}, \quad d \in \mathbb{Z},\; i \ge 0,
\]
which is for $d \ge 1$ the number of MTs with bottom row
$(d,2d,3d,\ldots,nd)$ and exactly $i+1$ entries equal to $d$ in the left-most
NE-diagonal. If $d=2$, we obtain $B_{n,n-i}$, and it is also not hard to see that we obtain 
the ordinary refined enumeration numbers $A_{n,i+1}$ if $d=1$. Next we prove that the numbers 
$\mn{i}$ fulfill a certain LES. For $d=1$, this proves the first line of \eqref{LESord}, while for $d=2$ it proves the first line of 
\eqref{LES} for non-negative $i$.

\begin{proposition}
\label{lemma:mniLES}
For fixed $n,d \geq 1$ the numbers $(\mn{i})_{0 \le i \le n-1}$ satisfy the following LES 
\begin{equation}
\label{eq:mniLES}
\mn{i}=\sum_{j=i}^{n-1} \binom{n(d+1)-i-2}{j-i} (-1)^{j+n+1} \mn{j}, \quad
i=0,\ldots,n-1.
\end{equation}
\end{proposition}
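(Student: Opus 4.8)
The plan is to collapse the whole system \eqref{eq:mniLES} into a single identity for the polynomial $\alpha$ evaluated at two points, and then to verify that identity combinatorially through a reflection of monotone triangles; this is the strategy by which the first line of \eqref{LESord} is proved in \cite{FischerNewRefProof}, carried out here for general $d$. First I would perform the formal reduction. Write $g(k_1):=\alpha(n;k_1,2d,3d,\ldots,nd)$, a polynomial of degree $n-1$, so that $\mn{j}=(-1)^j(\Delta_{k_1}^j g)(d+1)$ and $\Delta_{k_1}^j g\equiv 0$ for $j\ge n$. Substituting into the right-hand side of \eqref{eq:mniLES}, the sign $(-1)^{j+n+1}(-1)^j=(-1)^{n+1}$ becomes independent of $j$, the summation may be extended to all $j\ge i$, and with $M:=n(d+1)-i-2\ge 0$ the binomial theorem $\sum_{\ell\ge 0}\binom{M}{\ell}\Delta_{k_1}^{\ell}=(\id+\Delta_{k_1})^{M}=E_{k_1}^{M}$ turns the sum into $(-1)^{n+1}(E_{k_1}^{M}\Delta_{k_1}^{i}g)(d+1)=(-1)^{n+1}(\Delta_{k_1}^{i}g)\bigl((n+1)(d+1)-i-2\bigr)$. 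Thus \eqref{eq:mniLES} is equivalent to
\[
(-1)^{i}(\Delta_{k_1}^{i}g)(d+1)=(-1)^{n+1}(\Delta_{k_1}^{i}g)\bigl((n+1)(d+1)-i-2\bigr),\qquad i=0,\ldots,n-1 .
\]

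Next I would bring in the rotation identity $\alpha(n;k_1,\ldots,k_n)=(-1)^{n-1}\alpha(n;k_2,\ldots,k_n,k_1-n)$ together with the translation invariance of $\alpha$ (both from \cite{FischerNumberOfMT}; the rotation identity is also the decisive tool in \cite{FischerNewRefProof}). Applying it to $g$ replaces $\Delta_{k_1}$ on the first argument by $\Delta$ on the last argument of $\alpha(n;2d,3d,\ldots,nd,\cdot\,)$; combining this with $\Delta^{i}=E^{i}\delta^{i}$ and keeping track of the shifts, the right-hand side of the displayed identity becomes $\bigl(\delta_{k_n}^{i}\alpha(n;2d,3d,\ldots,nd,k_n)\bigr)\big|_{k_n=(n+1)d-1}$. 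Since $d\ge 1$ we have $2d<3d<\cdots<nd\le(n+1)d-1$, so the combinatorial interpretation of $\delta_{k_n}^{i}$ applies: this equals the number of monotone triangles with bottom row $(2d,3d,\ldots,nd,(n+1)d)$ with exactly $i+1$ entries equal to $(n+1)d$ in the right-most SE-diagonal, equivalently (subtract $d$ from every entry) the number of monotone triangles with bottom row $(d,2d,\ldots,nd)$ with exactly $i+1$ entries equal to $nd$ in the right-most SE-diagonal. On the other side, $d+1\le 2d<3d<\cdots<nd$ lets us apply the interpretation of $(-1)^{i}\Delta_{k_1}^{i}$ to identify the left-hand side $\mn{i}$ with the number of monotone triangles with the same bottom row $(d,2d,\ldots,nd)$ having exactly $i+1$ entries equal to $d$ in the left-most NE-diagonal.

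It then remains to give a bijection between these two families, and this is provided by reflecting monotone triangles: the map $(a_{p,q})_{1\le q\le p\le n}\mapsto\bigl((n+1)d-a_{p,p+1-q}\bigr)_{1\le q\le p\le n}$ reverses each row and replaces each entry $v$ by $(n+1)d-v$, takes monotone triangles to monotone triangles, fixes the bottom row $(d,2d,\ldots,nd)$ (which is exactly where the arithmetic-progression shape of the bottom row is used), and interchanges the left-most NE-diagonal with the right-most SE-diagonal while exchanging the values $d$ and $nd$; hence the two families are equinumerous, which proves the displayed identity and therefore \eqref{eq:mniLES}. For $d=1$ this recovers the first line of \eqref{LESord} and for $d=2$ the first line of \eqref{LES} restricted to $i\ge 0$. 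The one place requiring real care is the passage from the first to the last argument of $\alpha$: the more obvious reflection $\alpha(n;k_1,\ldots,k_n)=\alpha(n;-k_n,\ldots,-k_1)$ would move the evaluation point outside the range where these combinatorial interpretations hold, so one genuinely needs the rotation identity, after which the evaluation point lands at $(n+1)d-1$ precisely when $d\ge 1$ --- which is also why the hypothesis $d\ge 1$ is needed here.
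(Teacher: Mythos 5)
Your proof is correct and follows essentially the same route as the paper's: both arguments rest on the cyclic rotation identity \eqref{eq:alphaPropCyc} together with shift invariance, the conversion $\Delta = E\delta$, a binomial expansion of a power of the shift operator, the degree truncation, and the reflection $x\mapsto (n+1)d-x$ identifying the $\delta$-count at $k_n=nd-1$ with $\mn{j}$. The only difference is presentational --- you collapse the right-hand side of \eqref{eq:mniLES} via $E^{M}=(\id+\Delta)^{M}$, whereas the paper expands the left-hand side via $E^{-M}=(\id-\delta)^{M}$ --- which is the same computation read in the opposite direction.
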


\begin{proof}
The main ingredients of the proof are the identities 
\begin{align}
\label{eq:alphaPropCyc}
  \alpha(n;k_1,k_2,\ldots,k_n) &= (-1)^{n-1} \alpha(n; k_2,k_3,\ldots,k_n,k_1-n),\\
\label{eq:alphaPropShift}
  \alpha(n;k_1,k_2,\ldots,k_n) &= \alpha(n;k_1+c,k_2+c,\ldots,k_n+c), \quad c \in \mathbb{Z}.
\end{align}
A proof of the first identity was given in \cite{FischerNewRefProof}. The second identity is obvious by combinatorial arguments if $k_1 < k_2 < \cdots < k_n$ and is therefore
also true as identity satisfied by the polynomial. Together with $\Delta_x =
E_x \delta_x$, $E^{-1}_x=(\id - \delta_x)$ and the Binomial Theorem we obtain
\begin{align*}
\mn{i}&= (-1)^{i}\Delta_{k_1}^{i}
\alpha(n;k_1,2d,3d,\ldots,nd)|_{k_1=d+1} \\
&= (-1)^{i+n+1} \Delta_{k_1}^{i} \alpha(n;2d,3d,\ldots,nd,k_1-n)|_{k_1=d+1} \\
&= (-1)^{i+n+1} E_{k_1}^{-n-nd+i+2} \delta_{k_1}^{i}
\alpha(n;2d,3d,\ldots,nd,k_1+d)|_{k_1=nd-1} \\
&= (-1)^{i+n+1} (\id-\delta_{k_1})^{n(d+1)-i-2} \delta_{k_1}^{i}
\alpha(n;d,2d,\ldots,(n-1)d,k_1)|_{k_1=nd-1} \\
&= \sum_{j \geq 0} \binom{n(d+1)-i-2}{j}(-1)^{i+j+n+1} \delta_{k_1}^{i+j}
\alpha(n;d,2d,\ldots,(n-1)d,k_1)|_{k_1=nd-1} \\
&= \sum_{j \geq i} \binom{n(d+1)-i-2}{j-i}(-1)^{j+n+1} \delta_{k_1}^{j}
\alpha(n;d,2d,\ldots,(n-1)d,k_1)|_{k_1=nd-1}. \\
\end{align*}
Since applying the $\delta$-operator to a polynomial decreases its degree, and
$\alpha(n;k_1,\ldots,k_n)$ is a polynomial of degree $n-1$ in each $k_i$, it
follows that the summands of the last sum are zero whenever $j\geq n$. So, it remains to show that 
\begin{equation}
\label{eq:mniAltDesc}
\mn{j} = \delta_{k_1}^{j} \alpha(n;d,2d,\ldots,(n-1)d,k_1)|_{k_1=nd-1}.
\end{equation}
From the discussion preceding the proposition we know that the right-hand side of
\eqref{eq:mniAltDesc} is the number of MTs with bottom row $(d,2d,\ldots,nd)$
and exactly $j+1$ entries equal to $nd$ in the right-most SE-diagonal.
Replacing each entry $x$ of the MT by $(n+1)d-x$ and reflecting
it along the vertical symmetry axis gives a one-to-one correspondence with the objects counted by $\mn{j}$.
\end{proof}

\section{The numbers \texorpdfstring{$\mn{i}$}{C\_\{n,i\}(d)} for \texorpdfstring{$i < 0$}{i < 0}}
\label{extension}
In order to prove \eqref{bniFormula}, it remains to extend the definition of $C_{n,i}^{(2)}$ to $i=-n,\ldots,-1$ in such a way that both the symmetry $C_{n,i}^{(2)} = C_{n,-i-1}^{(2)}$ and the first line of \eqref{LES} is satisfied for negative $i$. Note that the definition of $C_{n,i}^{(2)}$ contains the operator $\Delta_{k_1}^i$ which is per se only defined for $i \geq 0$. The difference operator is (in discrete analogy to differentiation) only invertible up to an additive constant. This motivates the following definitions of right inverse difference operators:

Given a polynomial $p: \mathbb{Z} \to \mathbb{C}$, we define the right inverse difference operators as 
\begin{equation}
\label{def:invDiffOperator}
{^z \Delta^{-1}_x} p(x) := - \sum_{x'=x}^{z} p(x') \qquad \text{ and }  \qquad {^z \delta^{-1}_x} p(x) := \sum_{x'=z}^{x} p(x')
\end{equation}
where $x, z \in \mathbb{Z}$ and the following extended definition of summation
\begin{equation}
\label{sumExtDef}
\sum_{i=a}^b f(i) := \begin{cases} 0, & \quad b=a-1, \\ -\sum\limits_{i =
b+1}^{a-1} f(i), & \quad b+1 \leq a-1, \end{cases}
\end{equation}
is used. The motivation for the extended definition is that it preserves polynomiality: suppose $p(i)$ is a polynomial in $i$ then
$(a,b) \mapsto \sum\limits_{i=a}^{b} p(i)$ is a polynomial function on $\mz^2$. The following identities can be easily checked.
\begin{proposition} 
\label{prop:inv_identities}
Let $z \in \mathbb{Z}$ and $p: \mathbb{Z} \to \mathbb{C}$ a function. Then
\begin{enumerate}
\item\label{prop:inv_identities1} $\Delta_x \, {^z \Delta^{-1}_x} = \id$ \, and \, ${^z \Delta^{-1}_x} \Delta_x p(x) = p(x) - p(z+1)$,
\item\label{prop:inv_identities2} $\delta_x  \, {^z \delta^{-1}_x} = \id$ \, and \, ${^z \delta^{-1}_x} \delta_x p(x) = p(x) - p(z-1)$,
\item\label{prop:inv_identities3} $\Delta_x = E_x \delta_x$ \, and \, ${^z \Delta^{-1}_x} = E_x^{-1} E_z \, {^z \delta^{-1}_x}$,
\item\label{prop:inv_identities4} $\Delta_y \, {^z \Delta^{-1}_x} = {^z \Delta^{-1}_x} \Delta_y$ and $\delta_y \, {^z \Delta^{-1}_x} = {^z \Delta^{-1}_x} \delta_y$ for $y \neq x,z$.  
\end{enumerate}
\end{proposition}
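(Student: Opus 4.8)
The statement to prove is Proposition~\ref{prop:inv_identities}, consisting of four routine-looking identities about the right inverse difference operators defined via the extended summation convention \eqref{sumExtDef}. Here is how I would organize the verification.

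\medskip

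\textbf{Overall strategy.} All four parts reduce to manipulations of telescoping sums, but the subtlety is that the sums may be ``empty'' or ``reversed'' in the sense of \eqref{sumExtDef}, so one cannot simply quote the classical telescoping identity; one must check that the extended convention is consistent with telescoping. The cleanest route is to first record a single lemma: for any $p$ and any $a,b \in \mathbb{Z}$ (with no constraint $a \le b+1$), one has $\sum_{i=a}^{b} \bigl(p(i+1)-p(i)\bigr) = p(b+1) - p(a)$ and likewise $\sum_{i=a}^{b}\bigl(p(i)-p(i-1)\bigr) = p(b) - p(a-1)$. This is immediate when $b \ge a-1$ from ordinary telescoping (including the empty case $b = a-1$, where both sides are $0$), and for $b+1 \le a-1$ it follows by applying the $b \ge a-1$ case to the interval $[b+1, a-1]$ and negating, exactly as \eqref{sumExtDef} prescribes. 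With this lemma in hand every part is a short computation.

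\medskip

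\textbf{Part-by-part.} For \eqref{prop:inv_identities1}: from the definition ${^z\Delta^{-1}_x}p(x) = -\sum_{x'=x}^{z}p(x')$, compute $\Delta_x\,{^z\Delta^{-1}_x}p(x) = -\sum_{x'=x+1}^{z}p(x') + \sum_{x'=x}^{z}p(x')$; using \eqref{sumExtDef} to write $\sum_{x'=x}^{z} = p(x) + \sum_{x'=x+1}^{z}$ (valid for all $x,z$ by the lemma applied to the constant-difference bookkeeping, or directly from the convention) gives $p(x)$, i.e.\ $\Delta_x\,{^z\Delta^{-1}_x} = \id$. For the second half, ${^z\Delta^{-1}_x}\Delta_x p(x) = -\sum_{x'=x}^{z}\bigl(p(x'+1)-p(x')\bigr) = -\bigl(p(z+1)-p(x)\bigr) = p(x) - p(z+1)$ by the telescoping lemma. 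Part \eqref{prop:inv_identities2} is the verbatim analogue with $\delta$ in place of $\Delta$ and the $\delta$-telescoping form of the lemma; here ${^z\delta^{-1}_x}p(x) = \sum_{x'=z}^{x}p(x')$, and $\delta_x\,{^z\delta^{-1}_x}p(x) = \sum_{x'=z}^{x}p(x') - \sum_{x'=z}^{x-1}p(x') = p(x)$, while ${^z\delta^{-1}_x}\delta_x p(x) = \sum_{x'=z}^{x}\bigl(p(x')-p(x'-1)\bigr) = p(x)-p(z-1)$.

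\medskip

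\textbf{Parts 3 and 4.} For \eqref{prop:inv_identities3}, $\Delta_x = E_x\delta_x$ is the defining relation $f(x+1)-f(x) = \bigl(f(x) - f(x-1)\bigr)$ shifted up, i.e.\ $(E_x\delta_x f)(x) = (\delta_x f)(x+1) = f(x+1)-f(x)$. The operator identity ${^z\Delta^{-1}_x} = E_x^{-1}E_z\,{^z\delta^{-1}_x}$ is checked by direct evaluation: the right side applied to $p$ gives $\bigl({^z\delta^{-1}_x}p\bigr)$ evaluated with $x \mapsto x-1$ and then with the $z$-dependence shifted by $+1$, namely $\sum_{x'=z+1}^{x-1+1}p(x') \cdot (\text{reindexed appropriately})$; carrying out the reindexing $x' \mapsto x'$ and comparing with $-\sum_{x'=x}^{z}p(x')$ via \eqref{sumExtDef} (note $-\sum_{x'=x}^{z} = \sum_{x'=z+1}^{x-1}$) yields equality. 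For \eqref{prop:inv_identities4}, since $\Delta_y$ and $\delta_y$ act only in the variable $y$ while ${^z\Delta^{-1}_x}$ is, for $y \ne x$ and $y \ne z$, a finite linear combination of evaluations in the variable $x$ with coefficients not involving $y$, the two operators act on disjoint sets of variables and therefore commute; one can make this fully rigorous by writing ${^z\Delta^{-1}_x}$ out as $-\sum_{x'=x}^{z}$, noting the summation bounds $x$ and $z$ do not involve $y$, and pulling $\Delta_y$ (resp.\ $\delta_y$) inside the finite sum.

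\medskip

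\textbf{Expected obstacle.} There is no deep difficulty here; the only thing that needs genuine care is making sure the extended summation convention \eqref{sumExtDef} is used consistently in the ``reversed'' regime $b + 1 \le a - 1$, so that the telescoping lemma and the index-splitting identity $\sum_{x'=a}^{b} = p(a) + \sum_{x'=a+1}^{b}$ hold unconditionally in $a,b \in \mathbb{Z}$. Once the telescoping lemma is established in that generality, all four parts are immediate, which is why the proposition is stated as ``easily checked''.
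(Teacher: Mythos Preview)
Your verification is correct and is precisely the kind of direct check the paper has in mind: the paper offers no proof beyond ``the following identities can be easily checked,'' and your telescoping lemma for the extended summation \eqref{sumExtDef} together with the termwise computations is exactly how one checks them. One cosmetic remark: in part \eqref{prop:inv_identities3} your intermediate expression ``$\sum_{x'=z+1}^{x-1+1}$ \dots reindexed appropriately'' is muddled, but the identity you actually use, namely $-\sum_{x'=x}^{z}p(x') = \sum_{x'=z+1}^{x-1}p(x')$ (which is immediate from \eqref{sumExtDef} in all three regimes $z\ge x$, $z=x-1$, $z\le x-2$), is correct and is all that is needed.
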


Now we are in the position to define higher negative powers of the difference operators: 
For $i < 0$ and $\mathbf{z}=(z_i,z_{i+1},\ldots,z_{-1}) \in \mz^{-i}$ we let
\begin{align*}
{^\mathbf{z} \Delta^{i}_x} &:= {^{z_i} \Delta^{-1}_x} \, {^{z_{i+1}} \Delta^{-1}_x} \dots {^{z_{-1}} \Delta^{-1}_x}, \\
{^\mathbf{z} \delta^{i}_x} &:= {^{z_i} \delta^{-1}_x} \, {^{z_{i+1}} \delta^{-1}_x} \dots {^{z_{-1}} \delta^{-1}_x}.
\end{align*}
After observing that ${^z} \delta^{-1}_x E^{-1}_x = E^{-1}_x E^{-1}_z {^z} \delta^{-1}_x$ we can deduce the following generalization of Proposition~\ref{prop:inv_identities} \eqref{prop:inv_identities3} inductively:
\begin{equation}
\label{eq:Deltatodelta}
{^\mathbf{z} \Delta^{i}_x} = 
E_x^{i} E_{z_i}^{i+2} E_{z_{i+1}}^{i+3} \dots E_{z_{-1}}^{1}  
{^\mathbf{z} \delta^{i}_x}.
\end{equation}

The right inverse difference operator allows us to naturally extend the definition of $\mn{i}$: First, let us fix a sequence of integers $\mathbf{x}=(x_j)_{j < 0}$ and set 
$\mathbf{x}_i=(x_i,x_{i+1},\ldots,x_{-1})$ for $i<0$. We define
\begin{equation}
\label{defc}
\mn{i} := 
\begin{cases}
\left. (-1)^{i}\Delta_{k_1}^{i} \alpha(n;k_1,2d,3d,\ldots,nd)\right|_{k_1=d+1}, & i=0,\ldots,n-1, \\
\left. (-1)^{i} \,\, {^{\mathbf{x}_i} \Delta_{k_1}^{i}} \alpha(n;k_1,2d,3d,\ldots,nd)\right|_{k_1=d+1}, & i=-n,\ldots,-1.
\end{cases}
\end{equation}
We detail on the choice of $\mathbf{x}$ in Section~\ref{seq}.

If $d \ge 1$, it is possible to give a rather natural combinatorial interpretation of $\mn{i}$ also for negative $i$ which is based on previous work of the authors. It is of no importance for the rest of the paper, however, it provides a nice intuition: One can show that for non-negative $i$, the quantity $\mn{i}$ counts partial MT where we cut off the bottom $i$ elements of the left-most NE-diagonal, prescribe the entry $d+1$ in position $i+1$ of the NE-diagonal and the entries $2d, 3d, \ldots, n d$ in the bottom row of the remaining array (see Figure \ref{fig:mni2Pos}); in fact, in the exceptional case of  $d=1$ we do not require that the bottom element $2$ of the truncated 
left-most NE-diagonal is strictly smaller than its right neighbor.

From \eqref{def:invDiffOperator} it follows that applying the inverse difference operator has the opposite effect of prolonging the left-most NE-diagonal:
if $i < 0$, the quantity $\mn{i}$ is the \emph{signed} enumeration of arrays of the shape as depicted in Figure~\ref{fig:mni2Neg} subject 
to the following conditions:
\begin{itemize}
\item For the elements in the prolonged NE-diagonal including the entry left of the entry $2d$, we require the following: Suppose $e$ is such 
an element and $l$ is its SW-neighbor and $r$ its SE-neighbor: if $l \le r$, then $l \le e \le r$; otherwise $r < e < l$. In the latter case, the element contributes a 
$-1$ sign.
\item Inside the triangle, we follow the rules of Generalized Monotone Triangles as presented in \cite{RieglerGMT}. The total sign is the product of the sign 
of the Generalized Monotone Triangle and the signs of the elements in the prolonged NE-diagonal.
\end{itemize}
\begin{figure}[ht]
\begin{minipage}[b]{0.45\linewidth}
\centering
\includegraphics[width=\textwidth]{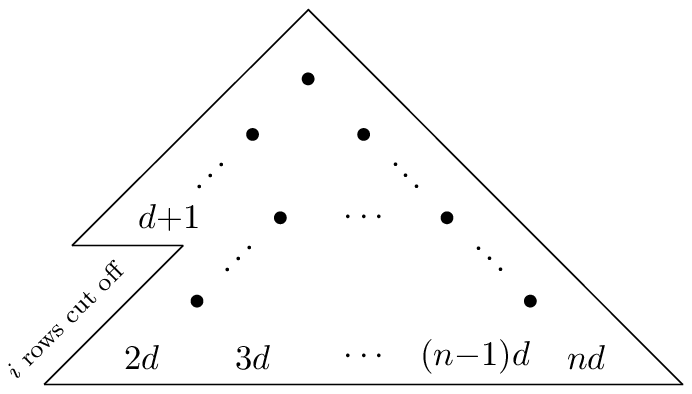}
\caption{$C_{n,i}^{(d)}$ for $i \geq 0$.\label{fig:mni2Pos}}
\end{minipage}
\hspace{0.5cm}
\begin{minipage}[b]{0.45\linewidth}
\centering
\includegraphics[width=\textwidth]{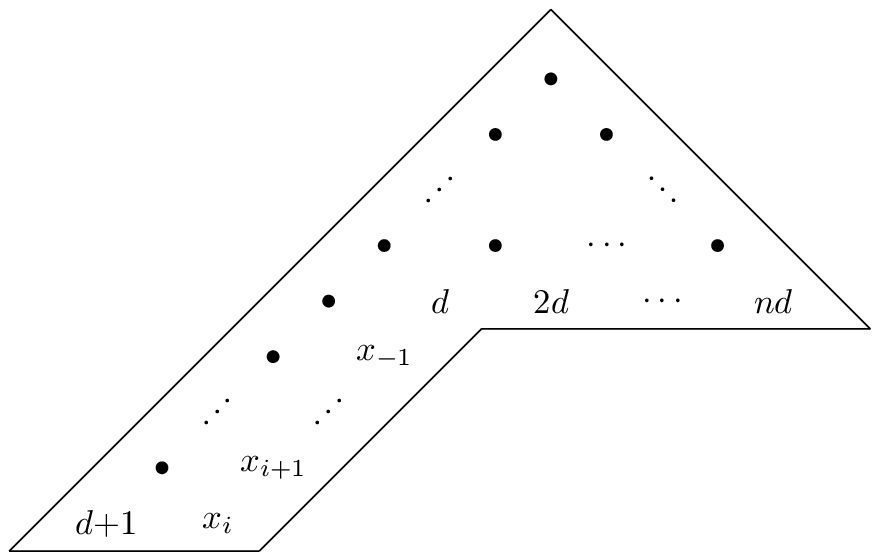}
\caption{$C_{n,i}^{(d)}$ for $i < 0$.\label{fig:mni2Neg}}
\end{minipage}
\end{figure}

\section{Extending the LES to negative \texorpdfstring{$i$}{i}}
\label{properties}

The purpose of this section is the extension of the LES in Proposition~\ref{lemma:mniLES} to negative $i$. This is accomplished with the help of the following lemma which shows that certain identities 
for $\Delta_{k_1}^i \alpha(n;k_1,\ldots,k_n)$, $i \geq 0$, carry over into the world of inverse difference operators.

\begin{lemma}
\label{lemma:reflectcyclic}
Let $n,d\geq 1$.
\begin{enumerate}
\item Suppose $i \ge 0$. Then
$$
\left. (-1)^{i} \,\, {\Delta_{k_1}^{i}} \alpha(n;k_1,2d,3d,\ldots,n d) \right|_{k_1=d+1} \\
= \left. {\delta_{k_n}^{i}} \alpha(n;d,2d,\ldots,(n-1) d,k_n) \right|_{k_n=nd-1}.
$$
\item Suppose $i < 0$, and let $\mathbf{x}_i=(x_i,\ldots,x_{-1})$ and $\mathbf{y}_i=(y_i,\ldots,y_{-1})$ satisfy the relation 
$y_j = (n+1)d - x_j$ for all $j$.
Then (see Figure \ref{fig:DeltaInvSymmetry})
$$
\left. (-1)^{i} \,\, {^{\mathbf{x}_i} \Delta_{k_1}^{i}} \alpha(n;k_1,2d,3d,\ldots,n d) \right|_{k_1=d+1} \\
= \left. {^{\mathbf{y}_i} \delta_{k_n}^{i}} \alpha(n;d,2d,\ldots,(n-1) d,k_n) \right|_{k_n=nd-1}.
$$
\item Suppose $i \ge 0$. Then
$$
\Delta^i_{k_1} \alpha(n;k_1,\ldots,k_n) = (-1)^{n-1} E^{i-n}_{k_1} \delta^i_{k_1} \alpha(n;k_2,\ldots,k_n,k_1).
$$
\item Suppose $i < 0$, and let $\mathbf{x}_i=(x_i,\ldots,x_{-1})$ and $\mathbf{y}_i=(y_i,\ldots,y_{-1})$ satisfy the relation 
$
y_j = x_j+j-n+2$ for all $j$. 
Then
$$
{{^{\mathbf{x}_i}}\Delta^i_{k_1}} \alpha(n;k_1,\ldots,k_n) = (-1)^{n-1} E^{i-n}_{k_1} \,\, {^{\mathbf{y}_i} \delta^i_{k_1}} \alpha(n;k_2,\ldots,k_n,k_1).
$$
\end{enumerate}
\end{lemma}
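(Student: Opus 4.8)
The plan is to handle the four parts as two pairs: I would prove (3) and (1) essentially as recorded facts and then obtain (4) from (3), and (2) from (1), by ``carrying over'' the same computation verbatim to the inverse difference operators — which is exactly the role the lemma is designed to play. The two elementary new ingredients needed are conjugation rules for the operators of \eqref{def:invDiffOperator}: for every $c\in\mz$,
\[
{}^{z}\Delta^{-1}_{x}\,E_x^{c}=E_x^{c}\,{}^{z+c}\Delta^{-1}_{x},\qquad
{}^{z}\delta^{-1}_{x}\,E_x^{c}=E_x^{c}\,{}^{z+c}\delta^{-1}_{x},
\]
both immediate from \eqref{def:invDiffOperator} by the reindexing $x'\mapsto x'+c$, together with a ``reflection'' analogue obtained from $x'\mapsto c-x'$; here the extended summation convention \eqref{sumExtDef} is precisely what makes the reindexing legitimate regardless of whether the summation limits are in increasing order.

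Part (3) is a one-line computation: \eqref{eq:alphaPropCyc} gives $\alpha(n;k_1,\dots,k_n)=(-1)^{n-1}\alpha(n;k_2,\dots,k_n,k_1-n)=(-1)^{n-1}E_{k_1}^{-n}\alpha(n;k_2,\dots,k_n,k_1)$, and $\Delta_{k_1}^{i}=E_{k_1}^{i}\delta_{k_1}^{i}$ (from $\Delta_x=E_x\delta_x$ and commutativity of $E_x,\delta_x$) then yields the claim. For part (4) I would run the same computation with ${}^{\mathbf{x}_i}\Delta_{k_1}^{i}$ in place of $\Delta_{k_1}^{i}$: the polynomial identity \eqref{eq:alphaPropCyc} is unaffected, iterating the conjugation rule above over the $|i|$ factors gives ${}^{\mathbf{x}_i}\Delta_{k_1}^{i}\,E_{k_1}^{-n}=E_{k_1}^{-n}\,{}^{\mathbf{x}_i-n}\Delta_{k_1}^{i}$ (the tuple with all base points lowered by $n$), and then \eqref{eq:Deltatodelta} rewrites ${}^{\mathbf{x}_i-n}\Delta_{k_1}^{i}$ as $E_{k_1}^{i}$ times ${}^{\mathbf{x}_i-n}\delta_{k_1}^{i}$ precomposed with the base-point shifts, which move the $j$-th base point $x_j-n$ to $x_j-n+(j+2)=y_j$ — precisely the relation prescribed in (4) — and thus turn ${}^{\mathbf{x}_i-n}\delta_{k_1}^{i}$ into ${}^{\mathbf{y}_i}\delta_{k_1}^{i}$. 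Collecting $E_{k_1}^{-n}E_{k_1}^{i}=E_{k_1}^{i-n}$ completes the proof.

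Part (1) is nothing but \eqref{eq:mniAltDesc}, already established in the proof of Proposition~\ref{lemma:mniLES}; for the record it also drops out of part (3) by specialising $(k_2,\dots,k_n)=(2d,\dots,nd)$, using the translation invariance \eqref{eq:alphaPropShift} with $c=-d$ to turn $\alpha(n;2d,\dots,nd,k_1)$ into $\alpha(n;d,2d,\dots,(n-1)d,k_1-d)$, evaluating at $k_1=d+1$, and invoking the reflection symmetry $g(v)=(-1)^{n-1}g(nd-n-v)$ of $g(v):=\alpha(n;d,2d,\dots,(n-1)d,v)$ — the ``replace each entry $x$ by $(n+1)d-x$ and reflect about the vertical axis'' involution on monotone triangles — which propagates to $\delta^{i}g$ as $(\delta^{i}g)(v)=(-1)^{i+n-1}(\delta^{i}g)(nd-n+i-v)$ and thereby identifies the two evaluation points $1+i-n$ and $nd-1$. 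For part (2) I would carry exactly this over: start from part (4) with $(k_2,\dots,k_n)=(2d,\dots,nd)$, absorb the translation by $-d$ into the base points using the $\delta$-conjugation rule, evaluate at $k_1=d+1$, and apply the reflection to the composed operator; via the reflection substitution in \eqref{def:invDiffOperator} and the conversions of Proposition~\ref{prop:inv_identities} \eqref{prop:inv_identities3} needed to pass between ${}^{z}\Delta^{-1}$ and ${}^{z}\delta^{-1}$, the running base point is transported to $(n+1)d-x_j=y_j$, and the signs $(-1)^{i}$, $(-1)^{n-1}$ (from (4)) and $(-1)^{n-1}$ (from the reflection) reconcile. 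Alternatively one may read part (2) off the combinatorial description of $\mn{i}$ for $i<0$ in Section~\ref{extension}: the same reflection bijection carries the signed arrays with prolonged left NE-diagonal and base points $\mathbf{x}_i$ to the signed arrays with prolonged right SE-diagonal and base points $(n+1)d-x_j$, so the identity holds for $d\ge 1$ and all integer $\mathbf{x}_i$, hence identically in $(d,\mathbf{x}_i)$.

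The conceptual content is slight — every step is an instance of \eqref{eq:alphaPropCyc} or \eqref{eq:alphaPropShift} or the reflection symmetry, a reindexing of a finite sum covered by \eqref{sumExtDef}, or an application of \eqref{eq:Deltatodelta} and Proposition~\ref{prop:inv_identities}. The main obstacle, and where I expect to spend the bulk of the effort and to be most exposed to sign or off-by-one slips, is the bookkeeping: faithfully tracking how the tuple of base points is transported through a composition of the non-commuting operators ${}^{z}\Delta^{-1}_{x}$, ${}^{z}\delta^{-1}_{x}$, $E_x$ and the argument substitutions forced by \eqref{eq:alphaPropCyc} and \eqref{eq:alphaPropShift}. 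I would organise the write-up so that parts (2) and (4) literally quote the scalar computations of (1) and (3) under the single replacement $\Delta_{k_1}^{i}\rightsquigarrow{}^{\mathbf{x}_i}\Delta_{k_1}^{i}$, all the base-point arithmetic being packaged into the conjugation rules stated at the outset; the collapse of the final base-point map in (2) to the clean formula $y_j=(n+1)d-x_j$ then serves as a built-in consistency check.
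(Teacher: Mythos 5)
Your proposal is correct and follows essentially the same route as the paper: parts (1) and (3) are read off from \eqref{eq:mniAltDesc} and from \eqref{eq:alphaPropCyc} together with $\Delta_x=E_x\delta_x$, and parts (2) and (4) amount to transporting the base points through the composite of inverse difference operators by reindexing the extended sums of \eqref{sumExtDef}. The only organizational difference is that the paper performs this transport by induction on $i$ --- for part (2) after first strengthening the claim to a free evaluation point $l$ with $r=(n+1)d-l$, see \eqref{generalize}, so that the inductive step can be applied under the outer sum --- whereas you package the same one-step reindexings into conjugation rules (and, for part (4), into \eqref{eq:Deltatodelta}) applied to the whole composite at once, which works equally well.
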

\begin{figure}[ht]
\begin{minipage}[b]{0.45\linewidth}
\centering
\includegraphics[width=\textwidth]{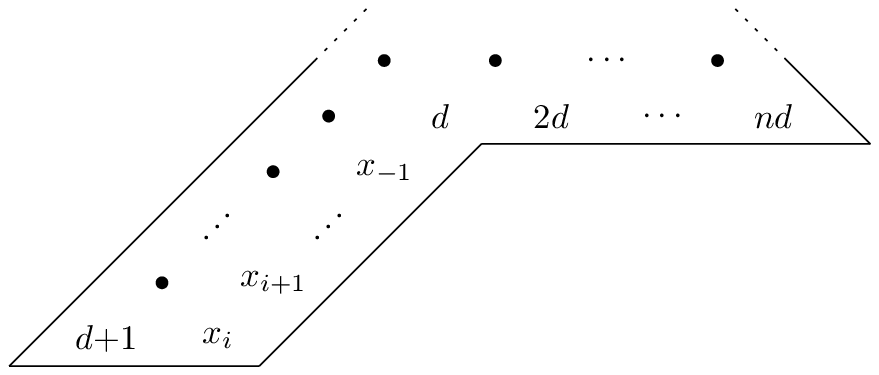}
\end{minipage}
\hspace{0.5cm}
\begin{minipage}[b]{0.45\linewidth}
\centering
\includegraphics[width=\textwidth]{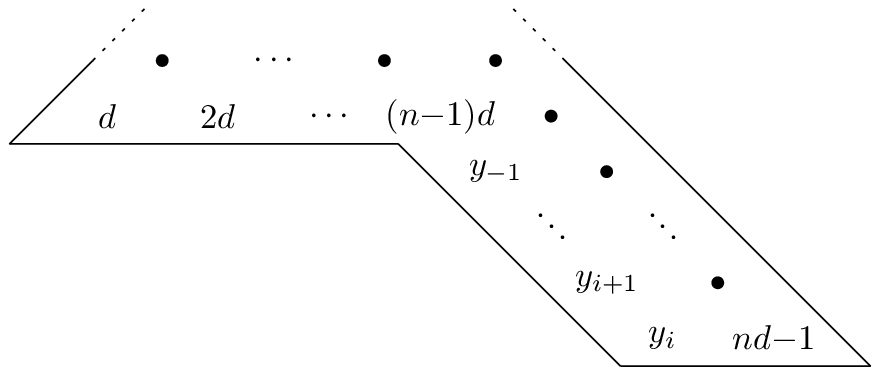}
\end{minipage}
\caption{Symmetry of inverse difference operators if $y_j = (n+1)d - x_j$.\label{fig:DeltaInvSymmetry}}
\end{figure}

\begin{proof}
For the first part we refer to \eqref{eq:mniAltDesc}. Concerning the second part, we actually show the following more general statement: if $r=(n+1)d-l$ and $i \le 0$, then
\begin{equation}
\label{generalize}
\left. (-1)^{i} \,\, {^{\mathbf{x}_i} \Delta_{k_1}^{i}} \alpha(n;k_1,2d,3d,\ldots,n d) \right|_{k_1=l} \\
= \left. {^{\mathbf{y}_i} \delta_{k_n}^{i}} \alpha(n;d,2d,\ldots,(n-1) d,k_n) \right|_{k_n=r}.
\end{equation}
We use induction with respect to $i$; the case $i=0$ is covered by the first part 
(${^{\mathbf{x}_0} \Delta_{k_1}^{0}}=\id={^{\mathbf{y}_0} \delta_{k_n}^{0}}$). If $i<0$, then, by the definitions of the right inverse operators and the induction hypothesis, we have 
\begin{align*}
\left. (-1)^{i} \,\, {^{\mathbf{x}_i} \Delta_{k_1}^{i}} \alpha(n;k_1,2d,3d,\ldots,n d) \right|_{k_1=l} 
&=  \sum_{k'_1=l}^{x_i}  (-1)^{i+1} \,\, {^{\mathbf{x}_{i+1}} \Delta_{k'_1}^{i+1}} \alpha(n;k'_1,2d,3d,\ldots,n d) \\
&=   \sum_{k'_1=l}^{x_i}  \left. {^{\mathbf{y}_{i+1}} \delta_{k'_n}^{i+1}} \alpha(n;d,2d,\ldots,(n-1) d,k'_n) \right|_{k'_n=(n+1)d-k'_1} \\
&= \sum_{k'_n=(n+1)d-x_i}^{(n+1)d-l}  {^{\mathbf{y}_{i+1}} \delta_{k'_n}^{i+1}} \alpha(n;d,2d,\ldots,(n-1) d,k'_n).
\end{align*}
The last expression is equal to the right-hand side of the claimed identity.

The third part follows from \eqref{eq:alphaPropCyc} and Proposition~\ref{prop:inv_identities} \eqref{prop:inv_identities3}. 
The last part is shown by induction with respect to $i$; in fact $i=0$ can be chosen to be the initial case of the induction. If 
$i<0$, then the induction hypothesis and \eqref{sumExtDef} imply
\begin{align*}
{{^{\mathbf{x}_i}}\Delta^i_{k_1}} \alpha(n;k_1,\ldots,k_n) &= - \sum_{l_1=k_1}^{x_i} 
{{^{\mathbf{x}_{i+1}}}\Delta^{i+1}_{l_1}} \alpha(n;l_1,k_2,\ldots,k_n) \\
& = - \sum_{l_1=k_1}^{x_i}  (-1)^{n-1} E^{i+1-n}_{l_1} \,\, {^{\mathbf{y}_{i+1}}\delta^{i+1}_{l_1}} \alpha(n;k_2,\ldots,k_n,l_1) \\
& = \sum_{l_1=x_i+i-n+2}^{k_1+i-n} (-1)^{n-1} \, {^{\mathbf{y}_{i+1}}\delta^{i+1}_{l_1}} \alpha(n;k_2,\ldots,k_n,l_1).
\end{align*}
The last expression is obviously equal to the right-hand side of the identity in the lemma.
\end{proof}

Now we are in the position to generalize Proposition~\ref{lemma:mniLES}.

\begin{proposition}
\label{prop:cniLES}
Let $n,d \geq 1$. For $i<0$, let  $\mathbf{x}_i, \mathbf{z}_i \in \mathbb{Z}^{-i}$ with $z_j =(n+2)(d+1)- x_j-j-4$ and define
\begin{equation}
\label{defd}
D^{(d)}_{n,i} := 
\begin{cases}
\left. (-1)^{i}\Delta_{k_1}^{i} \alpha(n;k_1,2d,3d,\ldots,nd)\right|_{k_1=d+1}, & i=0,\ldots,n-1, \\
\left. (-1)^{i} \,\, {^{\mathbf{z}_i} \Delta_{k_1}^{i}} \alpha(n;k_1,2d,3d,\ldots,nd)\right|_{k_1=d+1}, & i=-n,\ldots,-1.
\end{cases}
\end{equation}
Then  
 $$
\mn{i} = \sum_{j=i}^{n-1}\binom{n(d+1)-i-2}{j-i} (-1)^{j+n+1} D^{(d)}_{n,j} .
$$
holds for all $i=-n,\ldots,n-1$.
\end{proposition}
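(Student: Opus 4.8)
The plan is to mimic the computation in the proof of Proposition~\ref{lemma:mniLES}, but carried out with the right inverse difference operators instead of the ordinary ones, and to use Lemma~\ref{lemma:reflectcyclic} to control the bookkeeping of the lower limits $\mathbf{x}_i, \mathbf{z}_i$. For $i \ge 0$ there is nothing to prove beyond Proposition~\ref{lemma:mniLES}, since $D^{(d)}_{n,j} = \mn{j}$ in that range by definition; so fix $i<0$. First I would repeat the chain of manipulations from the proof of Proposition~\ref{lemma:mniLES}: starting from $(-1)^i\,{^{\mathbf{x}_i}\Delta^i_{k_1}}\alpha(n;k_1,2d,\ldots,nd)|_{k_1=d+1}$, apply part (4) of Lemma~\ref{lemma:reflectcyclic} (the cyclic-shift identity for inverse operators, with the shift $y_j = x_j+j-n+2$ in the limits), then use \eqref{eq:alphaPropShift} to translate the arguments, then the operator identity \eqref{eq:Deltatodelta} to convert the ${^\mathbf{z}\Delta}$ into a product of shift operators times ${^\mathbf{z}\delta}$, and finally expand $(\id - \delta_{k_1})^{n(d+1)-i-2}$ by the Binomial Theorem exactly as before. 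The binomial coefficient $\binom{n(d+1)-i-2}{j-i}$ and the sign $(-1)^{j+n+1}$ drop out of this expansion verbatim.

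The one genuinely new point is that in the inverse setting $(\id-\delta_{k_1})^{n(d+1)-i-2}$ is no longer a \emph{finite} sum of nonnegative powers of $\delta_{k_1}$ composed with $\delta^i_{k_1}$: when the exponent $n(d+1)-i-2$ is nonnegative (which it is here, since $i<0$) the Binomial Theorem does terminate, but we must check that the upper cutoff of the $j$-sum can be taken to be $n-1$ rather than running to $n(d+1)-i-2$. For $j\ge n$ the relevant term is $\delta^{j}_{k_1}$ applied — after the shift and reflection — to $\alpha(n;d,2d,\ldots,(n-1)d,k_1)$, evaluated at $k_1=nd-1$; since $\delta$ lowers degree and $\alpha$ has degree $n-1$ in its last argument, these terms vanish, just as in Proposition~\ref{lemma:mniLES}. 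Hence the sum truncates at $j=n-1$. Thus after the dust settles one obtains
$$
\mn{i} = \sum_{j=i}^{n-1}\binom{n(d+1)-i-2}{j-i}(-1)^{j+n+1}\left.{^{\mathbf{w}_j}\delta^{j}_{k_1}}\alpha(n;d,2d,\ldots,(n-1)d,k_1)\right|_{k_1=nd-1},
$$
where $\mathbf{w}_j$ is the composite shift of the lower limits accumulated along the way, and for $j\ge 0$ the inverse operator is just $\delta^j_{k_1}$ so that term equals $D^{(d)}_{n,j}=\mn{j}$ by \eqref{eq:mniAltDesc}. The remaining task is to identify the $j<0$ terms with $(-1)^{j+n+1}D^{(d)}_{n,j}$, i.e. to verify that the accumulated limit sequence $\mathbf{w}_j$ is exactly the reflection, under $y\mapsto(n+1)d-y$, of the sequence $\mathbf{z}_j$ prescribed in the statement. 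This is where the affine relation $z_j = (n+2)(d+1)-x_j-j-4$ in the hypothesis comes from: it is precisely the composition of the two limit-substitutions appearing in parts (2) and (4) of Lemma~\ref{lemma:reflectcyclic} (namely $y_j=(n+1)d-x_j$ and $y_j = x_j+j-n+2$), together with the translation by $c$ coming from \eqref{eq:alphaPropShift} and the shift in \eqref{eq:Deltatodelta}. So I would track the lower limits carefully through each of the four steps above and check that the net transformation on the index-$j$ limit is $x_j\mapsto(n+2)(d+1)-x_j-j-4$, matching the definition of $\mathbf{z}_j$ and hence giving $D^{(d)}_{n,j}$ via part (2) of the lemma.

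The main obstacle is this last bookkeeping of the lower limits: each of the identities \eqref{eq:alphaPropShift}, \eqref{eq:Deltatodelta} and the two parts of Lemma~\ref{lemma:reflectcyclic} shifts the limits by an index-dependent amount, and one has to be scrupulous about the order in which the four operations are composed (in particular that the limits attached to $k_1$ commute past shift operators in the variables they do not involve, which is Proposition~\ref{prop:inv_identities}\,\eqref{prop:inv_identities4}, and that ${^z\delta^{-1}_x}E^{-1}_x = E^{-1}_x E^{-1}_z\,{^z\delta^{-1}_x}$ used in deriving \eqref{eq:Deltatodelta}). Once the limit substitutions are composed correctly the identity $z_j=(n+2)(d+1)-x_j-j-4$ should fall out, and the rest is the same Chu–Vandermonde-free manipulation already used for $i\ge 0$. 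A clean way to organize the write-up is to prove the ``mixed'' statement in which $k_1$ is evaluated at a general $l$ (and correspondingly $k_n$ at $r=(n+1)d-l$), exactly as \eqref{generalize} does in the proof of Lemma~\ref{lemma:reflectcyclic}, so that the induction on $i$ carries the general evaluation point along and the limit-substitution relations compose transparently.
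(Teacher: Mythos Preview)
Your proposal is correct and follows essentially the same route as the paper: apply part~(4) of Lemma~\ref{lemma:reflectcyclic}, translate via \eqref{eq:alphaPropShift}, expand $(\id-\delta_{k_1})^{n(d+1)-i-2}$ binomially, truncate at $j=n-1$ by degree, and then convert back via part~(2) of the lemma, with the composite limit-shift producing exactly $z_j=(n+2)(d+1)-x_j-j-4$. One small point: your step invoking \eqref{eq:Deltatodelta} after part~(4) is redundant --- part~(4) already hands you $(-1)^{n-1}E_{k_1}^{i-n}\,{^{\mathbf{y}_i}\delta^i_{k_1}}$, so there is no remaining ${^{\mathbf{z}}\Delta}$ to convert; the paper instead commutes the factor $E_{k_1}^{i-n}$ past the inverse $\delta$'s using $E_x^{-1}\,{^{z}\delta^{-1}_x}={^{z+1}\delta^{-1}_x}\,E_x^{-1}$ (shifting the limits once more) before merging it into the binomial exponent.
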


\begin{proof}
To simplify notation let us define ${^{\mathbf{x}_i} \Delta_{k_1}^{i}} := \Delta_{k_1}^{i}$ for $i \geq 0$.
Since the definition of $C_{n,i}^{(d)}$ and $D_{n,i}^{(d)}$ only differ in the choice of constants, the fact that the system of linear equations is satisfied for $i=0,\ldots,n-1$ is Proposition~\ref{lemma:mniLES}. For $i=-n,\ldots,-1$ first note that, by Lemma~\ref{lemma:reflectcyclic}, \eqref{eq:alphaPropShift} and $E_x^{-1}\, {^{z}}\delta^{-1}_x = {^{z+1}} \delta^{-1}_x E_x^{-1}$, we have
$$
C^{(d)}_{n,i} = \left. (-1)^{n-1+i} E^{i-n}_{k_1} \,\,
{^{\mathbf{y}_i} \delta^i_{k_1}} \alpha(n;d,2d,\ldots,(n-1) d, k_1) \right|_{k_1=1}
$$
where $\mathbf{y}_i= (y_i,\ldots,y_{-1})$ with $y_j = x_j+j+2-n-d$. This is furthermore equal to
$$
 \left. (-1)^{n-1+i} E^{i-n (d+1) +2}_{k_1} \,\,
{^{\mathbf{y}_i} \delta^i_{k_1}} \alpha(n;d,2d,\ldots,(n-1) d, k_1) \right|_{k_1=n d -1}.
$$
Now we use
$$
E^{i-n(d+1)+2}_{k_1} = (\id - \delta_{k_1})^{n(d+1)-i-2} = \sum_{j=0}^{n(d+1)-i-2} \binom{n(d+1)-i-2}{j} (-1)^j \delta^{j}_{k_1}
$$
and Proposition \ref{prop:inv_identities} \eqref{prop:inv_identities2} to obtain
$$
\sum_{j=0}^{n(d+1)-i-2} \binom{n(d+1)-i-2}{j} (-1)^{n-1+i+j} \,\, \left. {^{\mathbf{y}_{i+j}} \delta^{i+j}_{k_1}} \alpha(n;d,2d,\ldots,(n-1) d, k_1) \right|_{k_1=n d -1}.
$$
Since the (ordinary) difference operator applied to a polynomial decreases the degree, the upper summation limit can be changed to $n-1-i$.
Together with Lemma~\ref{lemma:reflectcyclic} this transforms into
\begin{align*}
\sum_{j=i}^{n-1} & \binom{n(d+1)-i-2}{j-i} (-1)^{n-1+j} \,\, \left. {^{\mathbf{y}_{j}} \delta^{j}_{k_1}} \alpha(n;d,2d,\ldots,(n-1) d, k_1) \right|_{k_1=n d -1} \\
&= \sum_{j=i}^{n-1} \binom{n(d+1)-i-2}{j-i} (-1)^{n-1} \,\, \left. {^{\mathbf{z}_{j}} \Delta^{j}_{k_1}} \alpha(n;k_1,2d,3d,\ldots,n d) \right|_{k_1=d+1}.
\end{align*}
\end{proof}

Now it remains to find an integer sequence $(x_j)_{j<0}$ such that 
$C_{n,i}^{(2)} = C_{n,-i-1}^{(2)}$ and 
$C_{n,i}^{(2)} = D_{n,i}^{(2)}$ for negative $i$.

\section{How to choose the sequence \texorpdfstring{$\mathbf{x}=(x_j)_{j < 0}$}{x=(x\_j) for j < 0}}
\label{seq}

In the section, it is shown that $C_{n,i}^{(2)} = C_{n,-i-1}^{(2)}$ if we choose $\mathbf{x}=(x_j)_{j < 0}$ with $x_j=-2j+1$, $j<0$. This can be deduced from the following more general result.

\begin{proposition}  
\label{prop:mniSymmetry} Let $x_j=-2j+1$, $j <0$, and set $\mathbf{x}_i = (x_i,x_{i+1},\ldots,x_{-1})$ for all $i<0$. 
Suppose $p:\mathbb{Z} \to \mathbb{C}$ and let 
$$ c_i :=
\begin{cases}
\left. (-1)^{i} \Delta_y^{i} p(y) \right|_{y=3}, & i \geq 0, \\
\left. (-1)^{i} \,\, {^{\mathbf{x}_i} \Delta^{i}_y} p(y) \right|_{y=3}, & i < 0, \\
\end{cases}
$$
for $i \in \mathbb{Z}$. Then the numbers satisfy the symmetry $c_i = c_{-i-1}$. 
\end{proposition}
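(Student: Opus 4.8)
The plan is to prove the symmetry $c_i = c_{-i-1}$ by reducing it to a statement about the inverse difference operators applied to an arbitrary polynomial, exploiting the specific choice $x_j = -2j+1$. First I would observe that for $i \ge 0$ the claim $c_i = c_{-i-1}$ relates a forward difference $\Delta_y^i p(y)$ evaluated at $y=3$ to an inverse difference ${}^{\mathbf{x}_{-i-1}}\Delta_y^{-i-1} p(y)$ evaluated at $y=3$; since every integer is either $\ge 0$ or $< 0$ and $i \mapsto -i-1$ swaps the two ranges, it suffices to prove the identity for $i \ge 0$, i.e. to show
\begin{equation*}
\left. (-1)^i \Delta_y^i p(y)\right|_{y=3} = \left. (-1)^{-i-1}\, {}^{\mathbf{x}_{-i-1}}\Delta_y^{-i-1} p(y)\right|_{y=3}.
\end{equation*}
Equivalently, writing $j = -i-1 < 0$, I want ${}^{\mathbf{x}_j}\Delta_y^{j}$ composed with $\Delta_y^{-j-1}$ (or the reverse) to act, after evaluation at $y=3$, in a controlled way. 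Since ${}^{z}\Delta_x^{-1}$ is a right inverse of $\Delta_x$ by Proposition~\ref{prop:inv_identities}\eqref{prop:inv_identities1}, composing $i+1 = -j$ copies of inverse operators with $i$ copies of $\Delta_y$ telescopes down to a single inverse difference operator, so the real content is the $i=0$ base case together with an induction that keeps track of the evaluation point and the constants $x_j$.

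The cleanest route is probably an induction on $i \ge 0$ simultaneously with a slightly more general claim that tracks the evaluation point. Concretely, I expect the right generalization to read: for $i \ge 0$,
\begin{equation*}
\left. (-1)^i \Delta_y^i p(y)\right|_{y=a} = \left. (-1)^{i+1}\, {}^{\mathbf{x}_{-i-1}}\Delta_y^{-i-1} p(y)\right|_{y=b}
\end{equation*}
for an appropriate affine relation between $a$ and $b$ dictated by $x_j = -2j+1$ (the case at hand being $a=b=3$, which should be the fixed point of that affine map, explaining why the symmetric choice $x_j=-2j+1$ is forced). For the base case $i=0$ one unwinds a single ${}^{x_{-1}}\Delta_y^{-1} p(y) = -\sum_{y'=y}^{x_{-1}} p(y')$ with $x_{-1} = 3$, and using the extended summation convention \eqref{sumExtDef} this sum is empty at $y=3$ (since $x_{-1}=3$, the range $y=3$ to $3$ is a single term — so one has to be careful; more likely the base case to set up is $i=0$ with ${}^{\mathbf{x}_{-1}}\Delta_y^{-1}$ and the identity $\Delta_y\, {}^{\mathbf{x}_{-1}}\Delta_y^{-1} = \id$ reorganized so the boundary term vanishes at $y=3$). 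For the inductive step one peels off one operator on each side: $\Delta_y^{i+1} = \Delta_y \Delta_y^i$ and ${}^{\mathbf{x}_{-i-2}}\Delta_y^{-i-2} = {}^{x_{-i-2}}\Delta_y^{-1}\, {}^{\mathbf{x}_{-i-1}}\Delta_y^{-i-1}$, and then uses $\Delta_y\,{}^{z}\Delta_y^{-1} = \id$ together with the definition \eqref{def:invDiffOperator} of the inverse operator as a (possibly extended) sum to rewrite one side as a telescoping sum of the other; the shift by $x_{-i-2} = 2(i+2)+1$ is exactly what moves the evaluation point from $a$ to the next required value and forces the fixed point $3$.

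The main obstacle, as usual with these inverse-difference arguments, will be bookkeeping: getting the affine relation between the evaluation points right, handling the extended summation convention \eqref{sumExtDef} correctly when the nominal summation bounds are in the "wrong" order (which is precisely the regime one lands in here, since $x_j = -2j+1 > 3$ for $j \le -2$ but the evaluation point stays at $3$), and making sure signs $(-1)^i$ versus $(-1)^{i+1}$ track through each peeling step. A secondary subtlety is that the operators ${}^{z}\Delta_y^{-1}$ are only right inverses, not two-sided inverses, so the order in which one composes and cancels matters: one should peel the inverse operators from the \emph{outside} (the $z_{-1}$ end) so that $\Delta_y$ cancels against the innermost ${}^{z_{-1}}\Delta_y^{-1}$ cleanly via Proposition~\ref{prop:inv_identities}\eqref{prop:inv_identities1}, rather than from the inside. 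Once the correct generalized statement with its affine evaluation-point relation is pinned down, the induction itself should be short and mechanical, and the choice $x_j = -2j+1$ will emerge as the unique one making $y=3$ a fixed point, which is the conceptual payoff of the proposition.
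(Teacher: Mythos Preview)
Your inductive strategy has a genuine gap, not just bookkeeping trouble. The generalized identity you are hoping for --- relating evaluation at $y=a$ on one side to $y=b$ on the other via some affine map with fixed point $3$ --- does not exist. Already for $i=0$ the statement reads $p(a) = -\,{}^{x_{-1}}\Delta_y^{-1}p(y)\big|_{y=b} = \sum_{y'=b}^{3}p(y')$, and for this to hold for \emph{every} $p$ one must have $a=b=3$; there is no one-parameter family to induct along. Without a strengthened hypothesis the induction does not close: peeling the outermost inverse operator from $c_{-i-2}$ gives $(-1)^{i+1}\sum_{d=3}^{2i+5}{}^{\mathbf{x}_{-i-1}}\Delta_d^{-i-1}p(d)$, but the hypothesis only controls the summand at $d=3$, not at $d=4,\dots,2i+5$, and shifting $p$ does not help because the upper limits $x_j$ stay fixed while the argument moves. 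Peeling from the inside is no better, since removing ${}^{x_{-1}}\Delta_y^{-1}$ leaves an operator with constants $(x_{-i-1},\dots,x_{-2})=(2i+3,\dots,5)$ rather than $\mathbf{x}_{-i}=(2i+1,\dots,3)$. One can rephrase the inductive step as the identity ${}^{\mathbf{x}_{-i-1}}\Delta_y^{-i-1}\Delta_y p(y)\big|_{y=3} = {}^{x_{-i-2}}\Delta_y^{-1}\,{}^{\mathbf{x}_{-i-1}}\Delta_y^{-i-1} p(y)\big|_{y=3}$; this is true, but it is not a formal consequence of Proposition~\ref{prop:inv_identities} and is in fact just as hard as the proposition itself.

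The paper's proof is entirely different and non-inductive. Since the identity must hold for all $p$, both $c_i$ and $c_{-i-1}$ are linear functionals $p\mapsto\sum_d \alpha_d\,p(d)$, and one matches coefficients. The side $c_i$ gives $\alpha_{d_1}=(-1)^{d_1+1}\binom{i}{d_1-3}$ by expanding $(E_y-\id)^i$. For $c_{-i-1}$ one unfolds the $(i{+}1)$-fold iterated sum using the extended convention \eqref{sumExtDef} as a signed enumeration of tuples $(d_1,\ldots,d_{i+1})$; these tuples are shown to split as $d_{i+1}\le\cdots\le d_{l+1}=2l+3$ followed by $2l+2=d_l>d_{l-1}>\cdots>d_1$, the strictly decreasing part is recognized as a prefix of a Dyck path and counted by the reflection principle, and the resulting single sum over the splitting point $l$ is evaluated to $(-1)^{d_1+1}\binom{i}{d_1-3}$ via Chu--Vandermonde. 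None of this combinatorial structure is visible from the operator-peeling viewpoint you sketch.
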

\begin{proof}
We may assume $i \geq 0$. Then 
$$
c_i = \left. (-1)^{i} (E_y - \id)^{i} p(y) \right|_{y=3} = \sum_{d_1=3}^{i+3} \binom{i}{d_1-3} (-1)^{d_1+1} p(d_1),
$$
and
\begin{equation}
\label{iterated}
c_{-i-1} = (-1)^{i+1} \,\,{^{\mathbf{x}_{-i-1}} \Delta_{y}^{-i-1}} \left. p(y) \right|_{y=3} 
= \sum_{d_{i+1}=3}^{2i+3} \sum_{d_{i} = d_{i+1}}^{2i+1} \cdots \sum_{d_2 = d_3}^5 \sum_{d_1=d_2}^3 p(d_1).
\end{equation}

The situation is illustrated in Figure \ref{fig:mn-i2}. According to \eqref{sumExtDef}, the iterated sum is the signed summation of $(d_1,d_2,\ldots,d_{i+1}) \in 
\mathbb{Z}^{i+1}$ subject to the following restrictions: We have 
$3 \le d_{i+1} \le 2i+3$, and for $1 \le j \le i$ the restrictions are   
\begin{equation}
\label{cases}
\begin{aligned}
d_{j+1} \le d_j \le 2j+1 \qquad & \text{if } d_{j+1} \le 2j+1,  \\
d_{j+1} > d_j > 2j+1 \qquad & \text{if } d_{j+1} > 2j+1. 
\end{aligned}
\end{equation} 
Note that there is no admissible $(d_1,d_2,\ldots,d_{i+1})$ with $d_{j+1}=2j+2$. 
The sign of $(d_1,d_2,\ldots,d_{i+1})$ is computed as 
$(-1)^{\# \{1 \le j \le i :\, d_{j} > 2j +1\}}$.

\begin{figure}[ht]
\begin{center}
\scalebox{0.8}{
\includegraphics{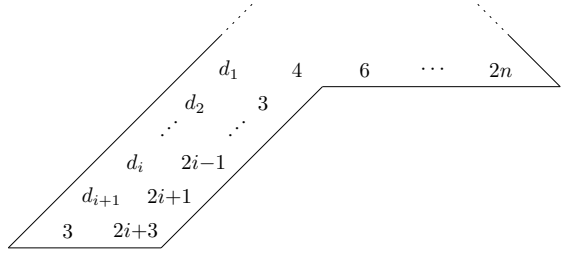}
}
\end{center}
\caption{Combinatorial interpretation of \eqref{iterated} if $p(y)=\alpha(n;y,4,6,\ldots,2n)$.\label{fig:mn-i2}}
\end{figure}

The proof now proceeds by showing that the signed enumeration of $(d_1,\ldots,d_{i+1})$ with fixed $d_1$ is just $\binom{i}{d_1-3} (-1)^{d_1+1}$. The reversed sequence $(d_{i+1},d_{i},\ldots,d_1)$ is weakly increasing as long as we are in the first case of \eqref{cases}. However, once we switch from Case~1 to Case~2, the sequence is strictly decreasing afterwards, because 
$d_{j+1} > 2j+1$ implies $d_{j} > 2j+1 > 2j-1$. Thus, the sequence splits into two parts: there exists an $l$, $0 \le l \le i$, with 
$$
3 \le d_{i+1} \le d_i \le \ldots \le d_{l+1} > d_l > \ldots > d_1.
$$
Moreover, it is not hard to see that \eqref{cases} implies $d_{l+1}=2l+3$ and $d_l=2l+2$. The 
sign of the sequence is $(-1)^l$. Thus it suffices to count the following two types of sequences.
\begin{enumerate}
  \item\label{enum:d1} 
$3 \leq d_{i+1} \leq d_{i} \leq \cdots \leq d_{l+2} \leq d_{l+1} = 2l+3$.
  \item\label{enum:d2} $d_{l} = 2l+2 > d_{l-1} > \cdots > d_2 > d_1 > 3$ and $d_k > 2k+1$ for $1 < k \le l-1$; $d_1$ fixed.
\end{enumerate}
For the first type, this is accomplished by the binomial coefficient $\binom{i+l}{i-l}$.
 
If $l \ge 1$, then the sequences in \eqref{enum:d2} are prefixes of Dyck paths in disguise: to see this, consider prefixes of Dyck paths starting in $(0,0)$ with $a$ steps of type $(1,1)$ and $b$ steps of type $(1,-1)$. Such a partial Dyck path is uniquely determined by the $x$-coordinates of its $(1,1)$-steps. If $p_i$ denotes the position of the $i$-th $(1,1)$-step, then the coordinates correspond to such a partial Dyck path if and only if
\[
0 = p_1 < p_2 < \cdots < p_a < a+b \quad \text{and} \quad p_k < 2k-1. 
\]
In order to obtain \eqref{enum:d2} set $a \mapsto l-1$, $b \mapsto l+3-d_1$ and $p_k \mapsto 2l+2-d_{l-k+1}$. By the reflection principle, the number of prefixes of Dyck paths is 
$$\binom{a+b}{b} \frac{a+1-b}{a+1}=  \binom{2l+2-d_1}{l+3-d_1} \frac{d_1-3}{l}.$$ 

If $l=0$, then $d_1=d_2=\ldots=d_{i+1}=3$ and this is the only case where $d_1=3$. Put together, we see that the coefficient of $p(d_1)$ in \eqref{iterated} is 
\begin{equation}
\label{eq:di}
\sum_{l=1}^{i} (-1)^l \binom{i+l}{i-l} \binom{2l+2-d_1}{l+3-d_1} \frac{d_1-3}{l}
\end{equation}
if $d_1 \ge 4$. 
 Using standard tools to prove hypergeometric identities, it is not hard to see that this is equal to $\binom{i}{d_1-3} (-1)^{d_1+1}$
if $d_1 \ge 4$ and $i \ge 0$.  For instance, C. Krattenthaler's mathematica package HYP \cite{KrattHyp} can be applied as follows: After converting the sum into hypergeometric notation, one applies contiguous relation {\tt C16}. Next we use transformation rule {\tt T4306}, before it is possible to apply summation rule {\tt S2101} which is the Chu-Vandermonde summation.  
\end{proof}

In the following, we let $\mathbf{x}=(x_j)_{j < 0}$ with $x_j=-2j+1$ and $\mathbf{z}=(z_j)_{j < 0}$ with 
$z_j =(n+2)(d+1)+j-5$. Recall that $\mathbf{x}$ is crucial in the definition of $\mn{i}$, see \eqref{defc}, while 
$\mathbf{z}$ appears in the definition of $D^{(d)}_{n,i} $, see \eqref{defd}. To complete the proof of \eqref{bniFormula}, it remains to show 
\begin{equation}
\label{id:openproblem1}
C^{(2)}_{n,i} = D^{(2)}_{n,i}
\end{equation}
for $i=-n,-n+1,\ldots,-1$, since Proposition~\ref{prop:cniLES} and Proposition~\ref{prop:mniSymmetry} then imply that the numbers 
$C^{(2)}_{n,i}$, $i=-n,-n+1,\ldots,n-1$, are a solution of the LES \eqref{LES}. The situation is depicted in Figure
\ref{fig:openProblem}.
\begin{figure}[ht]
\begin{minipage}[b]{0.49\linewidth}
\centering
\includegraphics[width=\textwidth]{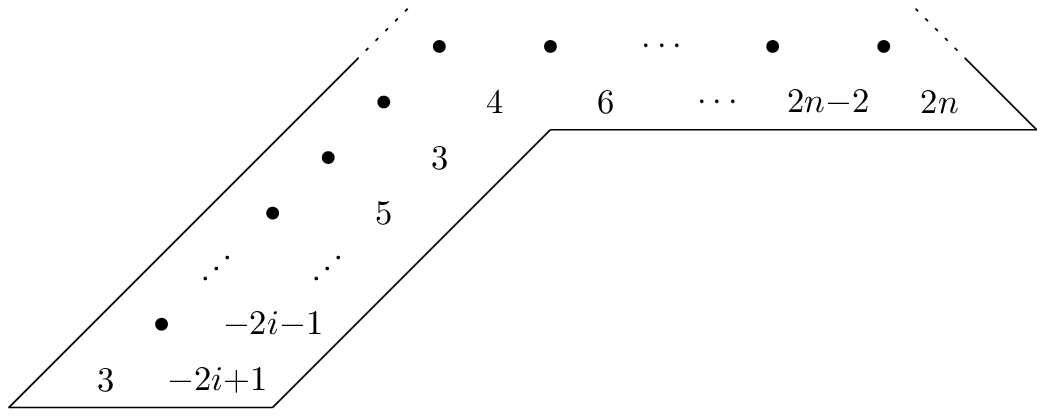}
\end{minipage}
\begin{minipage}[b]{0.49\linewidth}
\centering
\includegraphics[width=\textwidth]{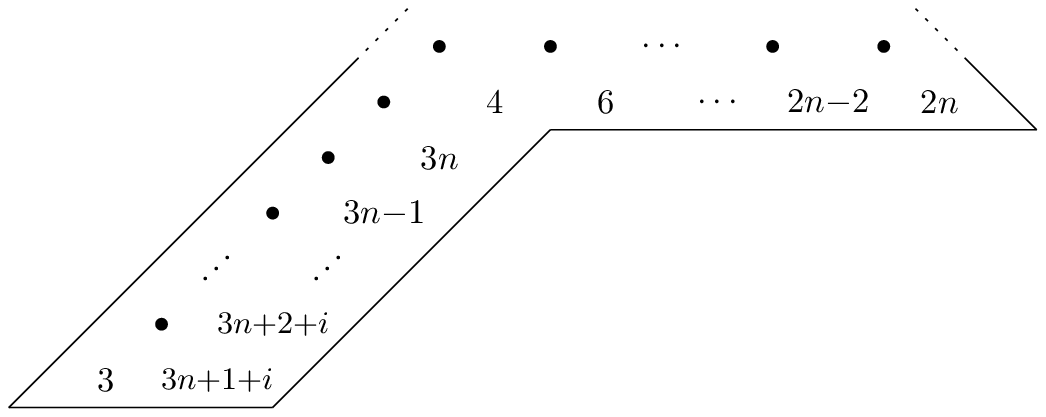}
\end{minipage}
\caption{Combinatorial interpretation of the open problem \eqref{id:openproblem1}.\label{fig:openProblem}}
\end{figure}
When trying to proceed as in the proof of Proposition~\ref{prop:mniSymmetry} one eventually ends up with having to show that the refined VSASM numbers $B_{n,i}$ satisfy a different system of linear equations:
\begin{equation}
\label{eq:newBniLES}
\sum_{j=0}^{n-1} \left( \binom{3n-i-2}{i+j+1}-\binom{3n-i-2}{i-j} \right) (-1)^j B_{n,n-j} = 0, \quad i=0,1,\ldots,n-1.
\end{equation}  
While computer experiments indicate that this LES uniquely determines $(B_{n,1},\ldots,B_{n,n})$ up to a multiplicative constant for all $n \geq 1$, it is not clear at all how to derive that the refined VSASM numbers satisfy \eqref{eq:newBniLES}. We therefore try a different approach in tackling \eqref{id:openproblem1}.

The task of the rest of the paper is to show that \eqref{id:openproblem1} follows from a more general multivariate Laurent polynomial identity and present partial results towards proving the latter.

\section{A first conjecture implying \texorpdfstring{\eqref{id:openproblem1}}{(6.4)}}
\label{firstconj}

We start this section by showing that the application of the right inverse difference operator ${^{z} \Delta}^{-1}_{k_1}$ to $\alpha(n;k_1,\ldots,k_n)$ can be replaced by the application of a bunch of ordinary difference operators to $\alpha(n+1;k_1,z, k_2, \ldots,k_{n})$. Some preparation that already appeared in \cite{FischerNumberOfMT} is needed: The definition of MTs implies (see Figure \ref{fig:mtrec}) that the polynomials $\alpha(n;k_1,\ldots,k_n)$ satisfy the recursion
\begin{equation}
\label{MTRec}
\alpha(n;k_1,\ldots,k_n)= \sum_{\substack{(l_1,\ldots,l_{n-1})\in
\mathbb{Z}^{n-1}, \\
k_1 \leq l_1 \leq k_2 \leq l_2 \leq \cdots \leq k_{n-1} \leq l_{n-1} \leq k_n,
\\ l_i < l_{i+1}}} \alpha(n-1; l_1,\ldots,l_{n-1}),
\end{equation}
whenever $k_1 < k_2 < \cdots < k_n$, $k_i \in \mathbb{Z}$.
\begin{figure}[ht]
\begin{center}
\scalebox{0.8}{
\includegraphics{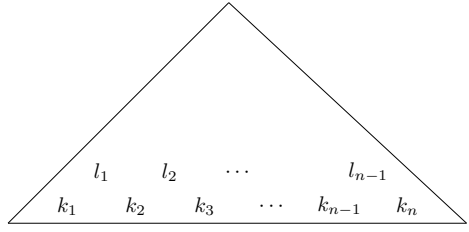}
}
\end{center}
\caption{Bottom and penultimate row of a Monotone Triangle.\label{fig:mtrec}}
\end{figure}
In fact, one can define a summation operator $\sum\limits_{(l_1,\ldots,l_{n-1})}^{(k_1,\ldots,k_n)}$ such that
\begin{equation}
\label{alphaRec}
\alpha(n;k_1,\ldots,k_n) = \sum_{(l_1,\ldots,l_{n-1})}^{\br} \alpha(n-1;l_1,\ldots,l_{n-1})
\end{equation}
for all $(k_1,\ldots,k_n) \in\mathbb{Z}^n$. The postulation that the summation operator should extend \eqref{MTRec} motivates the recursive definition
\begin{align}
\label{sumOpRec}
\sum_{(l_1,\ldots,l_{n-1})}^{\br} A(l_1,\ldots,l_{n-1}) := 
&\sum_{(l_1,\ldots,l_{n-2})}^{(k_1,\ldots,k_{n-1})}
\sum_{l_{n-1} = k_{n-1}+1}^{k_n} A(l_1,\ldots,l_{n-2},l_{n-1}) \\\notag &+
\sum_{(l_1,\ldots,l_{n-2})}^{(k_1,\ldots,k_{n-2},k_{n-1}-1)}
A(l_1,\ldots,l_{n-2},k_{n-1}), \quad n \geq 2
\end{align}
with $\sum\limits_{()}^{(k_1)}:=\id$. Recall the extended definition of the sum over intervals \eqref{sumExtDef} to make sense of this definition for all $(k_1,\ldots,k_n) \in \mathbb{Z}^n$. One can show that this definition ensures that the summation operator preserves
polynomiality, i.e. 
\[
(k_1,\ldots,k_n) \;\mapsto\; \sum_{(l_1,\ldots,l_{n-1})}^{\br} A(l_1,\ldots,l_{n-1})
\]
is a polynomial function on $\mathbb{Z}^n$ whenever $A(l_1,\ldots,l_{n-1})$ is a polynomial. Since a polynomial in $(k_1,\ldots,k_n)$ is uniquely determined by its evaluations at $k_1 < k_2 < \cdots < k_n$, we may also use any other recursive description of the summation operator as long as it is based on the extended definition of ordinary sums \eqref{sumExtDef} and specializes to \eqref{MTRec} whenever $k_1 < k_2 < \cdots < k_n$. So, we can also use the recursive definition
\begin{align}
\label{sumOpRec3}
\sum_{(l_1,\ldots,l_{n-1})}^{\br} A(l_1,\ldots,l_{n-1}) = 
& \sum_{(l_2,\ldots,l_{n-1})}^{(k_2,\ldots,k_{n})} \sum_{l_{1} =
k_1}^{k_2-1} A(l_1,l_2,\ldots,l_{n-1}) \\\notag &+
\sum_{(l_2,\ldots,l_{n-1})}^{(k_2+1,k_3,\ldots,k_n)}
A(k_2,l_2,\ldots,l_{n-1}), \quad n \geq 2.
\end{align}

\begin{lemma} 
\label{lemma:inv_alpha}
Let $i<0$ and $\mathbf{x}_i \in \mathbb{Z}^{-i}$. Then
\begin{multline*}
{^{\mathbf{x}_i}\Delta^{i}_{k_j}} \alpha(n;k_1,\ldots,k_n)= 
(-1)^{i j} \\ 
\times  \Delta_{k_1}^{-i} \ldots \Delta_{k_{j-1}}^{-i} \delta^{0}_{x_{i}} \delta^1_{x_{i+1}} \dots \delta_{x_{-1}}^{-i-1} \delta^{-i}_{k_{j+1}} \dots \delta^{-i}_{k_n} \alpha(n-i;k_1,\ldots,k_j,x_{i},x_{i+1},\ldots,x_{-1},k_{j+1},\ldots,k_n)
\end{multline*}
and 
\begin{multline*}
{^{\mathbf{x}_i} \delta_{k_j}^i} \alpha(n;k_1,\ldots,k_n)= (-1)^{(j-1) i + \binom{-i}{2}} \\
\times \Delta_{k_1}^{-i} \dots \Delta_{k_{j-1}}^{-i} 
\Delta_{x_{-1}}^{-i-1}  \Delta_{x_{-2}}^{-i-2} \dots \Delta_{x_{i}}^{0} 
\delta^{-i}_{k_{j+1}} \dots \delta^{-i}_{k_{n}} \alpha(n-i;k_1,\ldots,k_{j-1},x_{-1},x_{-2},\ldots,x_{i},k_{j},\ldots,k_n).
\end{multline*}
\end{lemma}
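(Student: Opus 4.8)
The plan is to reduce the statement to iterated applications of a single "base case" identity that converts one right inverse difference operator $^{z}\Delta^{-1}_{k_j}$ (resp. $^{z}\delta^{-1}_{k_j}$) applied to $\alpha(n;k_1,\ldots,k_n)$ into the application of ordinary difference operators to $\alpha(n+1;\ldots)$ with one extra argument inserted. Concretely, I would first establish the single-operator version: for $z\in\mathbb{Z}$,
\begin{equation*}
{^{z}\Delta^{-1}_{k_j}}\,\alpha(n;k_1,\ldots,k_n)
= (-1)^{j-1}\,\delta^{0}_{z}\,\delta_{k_{j+1}}\cdots\delta_{k_n}\,\alpha(n+1;k_1,\ldots,k_j,z,k_{j+1},\ldots,k_n),
\end{equation*}
and the analogous one with $^{z}\delta^{-1}_{k_j}$ producing $\Delta_{z}\,\delta_{k_{j+1}}\cdots\delta_{k_n}$ applied to $\alpha(n+1;k_1,\ldots,k_{j-1},z,k_j,\ldots,k_n)$ up to a sign. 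This base case is where the recursion \eqref{alphaRec}, in one of its two forms \eqref{sumOpRec} or \eqref{sumOpRec3}, enters: writing $\alpha(n+1;\ldots)$ as a sum over an interpolating row via the summation operator, applying the $\delta$'s to collapse the telescoping sums in the variables $k_{j+1},\ldots,k_n$ (and the inserted variable $z$), and comparing with the extended-summation definition \eqref{def:invDiffOperator}, \eqref{sumExtDef} of $^{z}\Delta^{-1}_{k_j}$. Since all objects involved are polynomials, it suffices to check the identity for $k_1<\cdots<k_n$ and $z$ in a suitable range, where everything is a genuine combinatorial count, and then invoke polynomiality.

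Next I would iterate. Recall $^{\mathbf{x}_i}\Delta^{i}_{k_j}={^{x_i}\Delta^{-1}_{k_j}}\cdots{^{x_{-1}}\Delta^{-1}_{k_j}}$, a product of $-i$ right inverse operators, all acting on the same variable $k_j$. Applying the base case once to the innermost operator $^{x_{-1}}\Delta^{-1}_{k_j}$ turns $\alpha(n;\ldots)$ into (difference operators in $k_{j+1},\ldots,k_n$ and in $x_{-1}$) applied to $\alpha(n+1;k_1,\ldots,k_j,x_{-1},k_{j+1},\ldots,k_n)$. Now the next operator $^{x_{-2}}\Delta^{-1}_{k_j}$ must be commuted past the already-produced difference operators (using Proposition~\ref{prop:inv_identities}\eqref{prop:inv_identities4}, since those act on variables other than $k_j$), and then the base case is applied again, inserting $x_{-2}$ directly after $k_j$ and producing a $\delta_{x_{-1}}$ together with $\delta_{x_{-2}}$, etc. Each step raises the arity by one and contributes a $\delta$-power to the previously inserted variable; after $-i$ steps one arrives at $\alpha(n-i;k_1,\ldots,k_j,x_i,x_{i+1},\ldots,x_{-1},k_{j+1},\ldots,k_n)$ with the operator string $\delta^0_{x_i}\delta^1_{x_{i+1}}\cdots\delta^{-i-1}_{x_{-1}}\delta^{-i}_{k_{j+1}}\cdots\delta^{-i}_{k_n}$, and with $\Delta^{-i}_{k_1},\ldots,\Delta^{-i}_{k_{j-1}}$ accumulated on the left from commuting. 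The sign is bookkept across the iterations, and should assemble into $(-1)^{ij}$ (resp.\ $(-1)^{(j-1)i+\binom{-i}{2}}$ for the $\delta$-version, where the extra $\binom{-i}{2}$ comes from the order reversal of the inserted arguments). The second identity of the lemma follows the same scheme with the $\delta$-flavored base case, the inserted arguments appearing in reversed order $x_{-1},x_{-2},\ldots,x_i$ and to the left of $k_j$.

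The main obstacle I expect is the base-case identity itself, both in getting the placement of the inserted argument and the attached difference operators exactly right, and in pinning down the sign $(-1)^{j-1}$. The subtlety is that \eqref{alphaRec} holds for \emph{all} integer arguments only because the summation operator is defined through the extended sums \eqref{sumExtDef}, so one cannot simply manipulate combinatorial counts; one must either argue purely on the polynomial level using the recursive characterizations \eqref{sumOpRec}/\eqref{sumOpRec3}, or verify the identity on a Zariski-dense set of arguments and extend by polynomiality. A secondary but purely mechanical difficulty is the sign and commutation bookkeeping in the iteration — making sure that commuting $^{x_k}\Delta^{-1}_{k_j}$ past a $\delta_{x_m}$ or $\Delta_{k_p}$ never produces an unexpected sign (it does not, by Proposition~\ref{prop:inv_identities}\eqref{prop:inv_identities4}), and that the exponents $0,1,\ldots,-i-1$ on the inserted variables emerge correctly as each successive inner operator "sees" one more previously inserted argument that needs a fresh $\delta$. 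An induction on $-i$, with the base case above as the inductive step applied to the innermost operator, is the clean way to organize this and to keep the sign tracked.
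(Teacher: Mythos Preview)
Your overall strategy is exactly the paper's: prove a single-operator ``base case'' identity converting one $^{z}\Delta^{-1}_{k_j}$ into ordinary difference operators applied to $\alpha(n+1;\ldots)$ with an extra argument inserted, then iterate by induction on $-i$ using Proposition~\ref{prop:inv_identities}\eqref{prop:inv_identities4} to commute. The paper carries this out formally, with the base case being its equation~\eqref{eq:invLemmaIng1} combined with~\eqref{alphaRec}.

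There is, however, a concrete error in your stated base case. The correct $i=-1$ identity is
\[
{^{z}\Delta^{-1}_{k_j}}\,\alpha(n;k_1,\ldots,k_n)
= (-1)^{j}\,\Delta_{k_1}\cdots\Delta_{k_{j-1}}\,\delta_{k_{j+1}}\cdots\delta_{k_n}\,\alpha(n+1;k_1,\ldots,k_j,z,k_{j+1},\ldots,k_n),
\]
i.e.\ the operators $\Delta_{k_1},\ldots,\Delta_{k_{j-1}}$ must already appear here (and the sign is $(-1)^j$, not $(-1)^{j-1}$). These $\Delta$'s do \emph{not} ``accumulate from commuting'' during the iteration, as you suggest; commuting via Proposition~\ref{prop:inv_identities}\eqref{prop:inv_identities4} never manufactures new operators. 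They arise because, when you write $\alpha(n+1;k_1,\ldots,k_j,z,k_{j+1},\ldots,k_n)$ via the summation operator~\eqref{alphaRec}, you must collapse the sums attached to $k_1,\ldots,k_{j-1}$ just as much as those attached to $k_{j+1},\ldots,k_n$; the former are collapsed by $\Delta$'s (use the recursion in the form~\eqref{sumOpRec3}), the latter by $\delta$'s (use~\eqref{sumOpRec}). With the base case corrected, your iteration goes through: each application contributes one more $\Delta$ to each of $k_1,\ldots,k_{j-1}$ and one more $\delta$ to each previously inserted variable and to $k_{j+1},\ldots,k_n$, yielding the claimed exponent pattern and the sign $(-1)^{ij}$.
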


\begin{proof}
Informally, the lemma follows from the following two facts:
\begin{itemize}
\item The quantity ${^{\mathbf{x}_i}\Delta^{i}_{k_j}} \alpha(n;k_1,\ldots,k_n)$ can be interpreted as the signed enumeration of Monotone Triangle structures of the shape as depicted in Figure~\ref{fig:DeltaInvIKj} where the $j$-th NE-diagonal has been prolonged. Similarly, for ${^{\mathbf{x}_i}\delta^{i}_{k_j}} \alpha(n;k_1,\ldots,k_n)$, where the shape is depicted in Figure~\ref{fig:sDeltaInvIKj} and the $j$-th SE-diagonal has been prolonged.
\item The application of the $(-\Delta)$-operator truncates left NE-diagonals, while the $\delta$-operator truncates right SE-diagonals. This idea first appeared in \cite{FischerTopBottom}. 
\end{itemize}

\begin{figure}[ht]
\begin{center}
\scalebox{0.8}{
\includegraphics{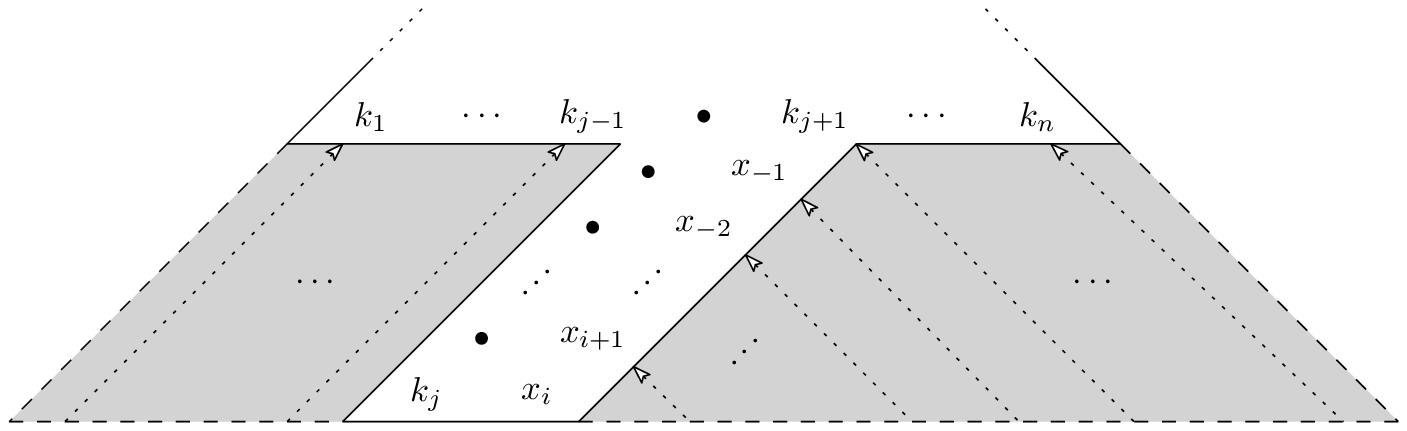}
}
\end{center}
\caption{${^{\mathbf{x}_i}\Delta^{i}_{k_j}} \alpha(n;k_1,\ldots,k_n)$\label{fig:DeltaInvIKj}}
\end{figure}
\begin{figure}[ht]
\begin{center}
\scalebox{0.8}{
\includegraphics{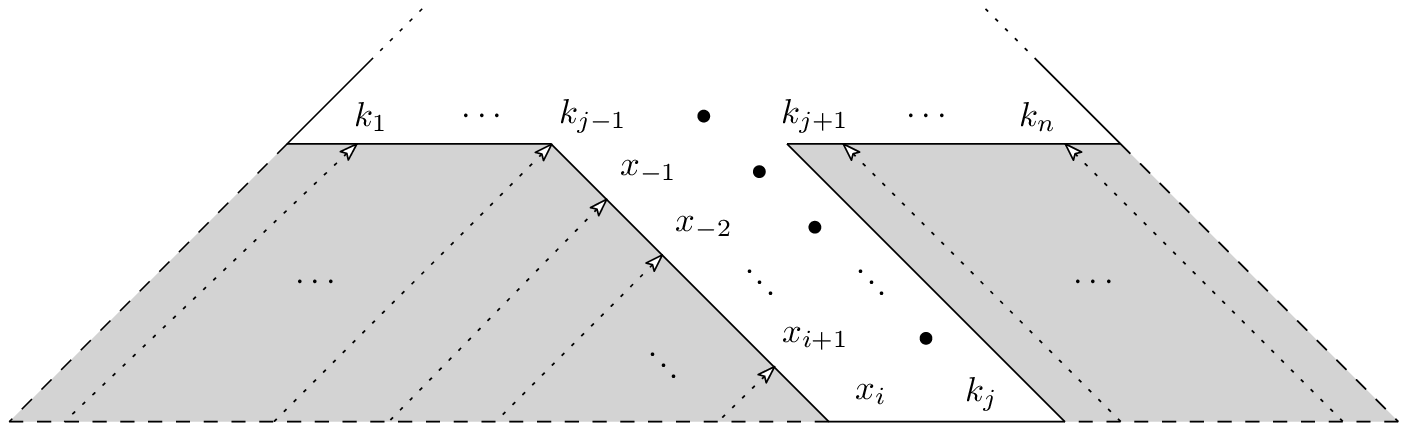}
}
\end{center}
\caption{${^{\mathbf{x}_i}\delta^{i}_{k_j}} \alpha(n;k_1,\ldots,k_n)$\label{fig:sDeltaInvIKj}}
\end{figure}

Formally, let us prove the first identity by induction with respect to $i$. 
First note that \eqref{sumOpRec} and \eqref{sumOpRec3} imply
\begin{align}
\label{eq:invLemmaIng1}
(-1)^j \Delta_{k_1} \dots \Delta_{k_{j-1}} & \delta_{k_{j+1}} \delta_{k_{j+2}} \dots \delta_{k_n} 
\sum_{(l_1,\ldots,l_n)}^{(k_1,\ldots,k_{j-1},k_j,x,k_{j+1},\ldots,k_n)} A(l_1,\ldots,l_n) \\
\notag &= - \sum_{(l_j)}^{(k_j,x)} A(k_1,\ldots,k_{j-1},l_j,k_{j+1},\ldots,k_n) = {^x}\Delta_{k_j}^{-1} A(k_1,k_2,\ldots,k_n).
\end{align}
Together with \eqref{alphaRec} the base case $i=-1$ follows. For the inductive step $i<-1$, apply the induction hypothesis,  \eqref{eq:invLemmaIng1}, \eqref{alphaRec} and Proposition \ref{prop:inv_identities} \eqref{prop:inv_identities4} to obtain
\begin{align*}
& {^{\mathbf{x}_i}\Delta_{k_j}^i} \alpha(n;k_1,\ldots,k_n) \\
&= {^{x_i}\Delta_{k_j}^{-1}} (-1)^{(i+1)j} \Delta_{k_1}^{-i-1} \ldots \Delta_{k_{j-1}}^{-i-1} \delta^{0}_{x_{i+1}} \delta^1_{x_{i+2}} \dots \delta_{x_{-1}}^{-i-2} \delta^{-i-1}_{k_{j+1}} \dots \delta^{-i-1}_{k_n}  \\
& \qquad \alpha(n-i-1;k_1,\ldots,k_j,x_{i+1},x_{i+2},\ldots,x_{-1},k_{j+1},\ldots,k_n) \\
&=  (-1)^{i j} \Delta_{k_1}^{-i} \ldots \Delta_{k_{j-1}}^{-i} \delta^{1}_{x_{i+1}} \delta^2_{x_{i+2}} \dots \delta_{x_{-1}}^{-i-1} \delta^{-i}_{k_{j+1}} \dots \delta^{-i}_{k_n} \\
&\qquad \sum_{(l_1,\ldots,l_{j},y_{i+1},\ldots,y_{-1},l_{j+1},\ldots, l_{n})}^{(k_1,\ldots,k_j,x_i,x_{i+1},\ldots,x_{-1},k_{j+1},\ldots,k_n)} \alpha(n-i-1;l_1,\ldots,l_j,y_{i+1},y_{i+2},\ldots,y_{-1},l_{j+1},\ldots,l_n) \\
&=(-1)^{i j} \Delta_{k_1}^{-i} \ldots \Delta_{k_{j-1}}^{-i} \delta^{1}_{x_{i+1}} \dots \delta_{x_{-1}}^{-i-1} \delta^{-i}_{k_{j+1}} \dots \delta^{-i}_{k_n}  \alpha(n-i;k_1,\ldots,k_j,x_{i},x_{i+1},\ldots,x_{-1},k_{j+1},\ldots,k_n).
\end{align*}

The second identity can be shown analogously. The sign is again obtained by taking the total number of applications of the $\Delta$-operator into account.
\end{proof}

In the following, we let $V_{x,y}:=E^{-1}_x + E_y - E^{-1}_x E_y$ and $S_{x,y}
f(x,y) := f(y,x)$. In \cite{FischerNumberOfMT} it was shown that 
\begin{equation}
\label{eq:antisymalpha}
(\id + E_{k_{i+1}} E^{-1}_{k_i} S_{k_i,k_{i+1}}) V_{k_i,k_{i+1}} \alpha(n;k_1,\ldots, k_n) = 0
\end{equation}
for $1 \le i \le n-1$. This property together with the fact that the degree of  
$\alpha(n;k_1,\ldots, k_n)$ in each $k_i$ is $n-1$ determines 
the polynomial up to a constant. Next we present a conjecture on general polynomials with property \eqref{eq:antisymalpha}; 
the goal of the current section is to show that this conjecture implies \eqref{id:openproblem1}.

\begin{conjecture}
\label{prop:somuchfun}
Let $1 \le s \le t$  and $a(k_1,\ldots,k_{s+t-1})$ be a polynomial in
$(k_1,\ldots,k_{s+t-1})$ with 
\begin{equation}
\label{eq:shiftAntisymmetry}
(\id + E_{k_{i+1}} E^{-1}_{k_i} S_{k_i,k_{i+1}}) V_{k_i,k_{i+1}} a(k_1,\ldots,k_{s+t-1}) = 0
\end{equation}
for $1 \le i \le s+t-2$. Then
\begin{multline*}
\prod_{i=1}^{s} E^{2s+3-2i}_{y_i} \delta^{i-1}_{y_i} \prod_{i=2}^{t} E_{k_i}^{2 i} \delta^s_{k_i}
a(y_1,\ldots,y_s,k_2,\ldots,k_t) \\ =
\prod_{i=2}^t E^{2i}_{k_i} (- \Delta_{k_i})^s \prod_{i=1}^s E^{2t+3-2i}_{y_i} (- \Delta_{y_i})^{s-i}
a(k_2,\ldots,k_t,y_1,\ldots,y_s)
\end{multline*}
if $y_1=y_2=\ldots=y_s=k_2=k_3=\ldots=k_t$. 
\end{conjecture}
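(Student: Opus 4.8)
The plan is to obtain the statement from Conjecture~\ref{conj}; in fact only the special case
\[
R_{s,t}(z_1,\dots,z_{s+t-1}) = R_{s,t}(z_1^{-1},\dots,z_{s+t-1}^{-1}),\qquad 1\le s\le t,
\]
is needed, and this reduces, by Theorem~\ref{inductionstep}, to the two families $t=s$ and $t=s+1$. The link is that both sides of the asserted identity are linear in $a$, and the polynomials $a$ satisfying the shift-antisymmetry \eqref{eq:shiftAntisymmetry} admit a clean description going back to \cite{FischerNumberOfMT}: since $V_{x,y}$, viewed as a polynomial in the commuting operators $\Delta_x,\Delta_y$, has constant term $1$ and is hence invertible on polynomials, every polynomial $a(k_1,\dots,k_m)$ with $m=s+t-1$ obeying \eqref{eq:shiftAntisymmetry} for $1\le i\le m-1$ is of the form $a=\bigl(\prod_{1\le p<q\le m}V_{k_p,k_q}\bigr)b$ with $b$ an honestly antisymmetric polynomial, and conversely. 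As antisymmetric polynomials are spanned by the alternants $\det(k_i^{e_j})_{1\le i,j\le m}$, $0\le e_1<\dots<e_m$, it is enough to prove the identity for $a=\bigl(\prod_{p<q}V_{k_p,k_q}\bigr)\det(k_i^{e_j})$.

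For such an $a$ I would pass to symbols. All the operators that occur -- the $\Delta$'s, $\delta$'s and $E$'s in the statement together with the factors $V_{k_p,k_q}$ -- are pure translation operators and hence commute. Thus the left-hand side equals $\bigl(\prod_{p<q}V_{v_p,v_q}\bigr)\mathcal{L}\,b$ evaluated on the diagonal $v_1=\dots=v_m$, where $(v_1,\dots,v_m)=(y_1,\dots,y_s,k_2,\dots,k_t)$ and $\mathcal{L}$ is the product of the single-variable operators $E_{y_i}^{2s+3-2i}\delta_{y_i}^{i-1}$ and $E_{k_i}^{2i}\delta_{k_i}^{s}$. Writing $\prod_{p<q}V_{v_p,v_q}\,\mathcal{L}=\sum_{\mathbf n}c_{\mathbf n}\prod_j E_{v_j}^{n_j}$ and using that the alternant $\det(v_i^{e_j})$ is itself an alternating sum over $\mathcal{S}_m$, the diagonal evaluation becomes $\sum_{\sigma\in\mathcal{S}_m}\sgn\sigma$ times a product of one-variable evaluations; that is, it equals $\asym$ applied to the symbol of $\prod_{p<q}V_{v_p,v_q}\,\mathcal{L}$, where $E_{v_j}$ is replaced by a formal variable $z_j$ and the alternant supplies the vector being antisymmetrized. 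Dividing out the Vandermonde $\prod_{p<q}(z_q-z_p)$ -- the very step used in the Laurent-polynomial remark after Conjecture~\ref{conj} -- converts $\asym$ into $\sym$.

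What remains is to match symbols. The symbol of $\prod_{p<q}V_{v_p,v_q}$ reproduces, up to the Vandermonde and an overall monomial common to both sides, the factor $\prod_{p<q}\frac{1-z_p+z_pz_q}{z_q-z_p}$ of $P_{s,t}$; the symbols of $E_{y_i}^{2s+3-2i}\delta_{y_i}^{i-1}=E_{y_i}^{2s+3-2i}(1-E_{y_i}^{-1})^{i-1}$ and $E_{k_i}^{2i}\delta_{k_i}^{s}$ reproduce the remaining monomial-times-$(1-z^{-1})$ factors of $P_{s,t}$; and on the right-hand side of the statement the symbols of $E_{k_i}^{2i}(-\Delta_{k_i})^{s}=E_{k_i}^{2i}(1-E_{k_i})^{s}$ and $E_{y_i}^{2t+3-2i}(-\Delta_{y_i})^{s-i}$ are the images of the left-hand ones under $z\mapsto z^{-1}$, with the signs and the common monomial prefactor cancelling in the end. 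Hence, after clearing the common monomial and the Vandermonde, the left-hand side of Conjecture~\ref{prop:somuchfun} equals $\sym P_{s,t}(z_1,\dots,z_{s+t-1})=R_{s,t}(z_1,\dots,z_{s+t-1})$ and the right-hand side equals $R_{s,t}(z_1^{-1},\dots,z_{s+t-1}^{-1})$ -- using that $\sym$ commutes with inverting all variables simultaneously -- so the statement is precisely the hypothesis of Theorem~\ref{implies}. (Specialising the resulting identity to $a=\alpha(n-i;\,\cdot\,)$, which satisfies \eqref{eq:shiftAntisymmetry} by \eqref{eq:antisymalpha}, and rewriting $C_{n,i}^{(2)}$ and $D_{n,i}^{(2)}$ via Lemma~\ref{lemma:inv_alpha}, is then exactly what yields \eqref{id:openproblem1}.)

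The real obstacle is therefore Conjecture~\ref{conj} itself: after Theorem~\ref{inductionstep} one is left with the two families $t=s$ and $t=s+1$, and both remain open. A secondary, purely technical point is the bookkeeping in the passage to symbols -- one must verify that the operator $S_{k_i,k_{i+1}}$ hidden in \eqref{eq:antisymalpha} does not disturb the description of the solution space of \eqref{eq:shiftAntisymmetry}, and that the monomial prefactors and signs accumulated along the way are genuinely common to the two sides -- but this looks routine rather than essential.
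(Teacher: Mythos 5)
Your proposal does not (and cannot) prove the statement: it is labelled a conjecture in the paper precisely because it remains open, and what you have written is, in structure, exactly the paper's own conditional treatment. Your reduction --- describe the solution space of \eqref{eq:shiftAntisymmetry} as translates of antisymmetric polynomials, convert the diagonal evaluation of a translation operator applied to an antisymmetric polynomial into the antisymmetrization of its symbol, divide by the Vandermonde to turn $\asym$ into $\sym$, and match the result with $R_{s,t}(z_1,\ldots,z_{s+t-1})-R_{s,t}(z_1^{-1},\ldots,z_{s+t-1}^{-1})$ --- is the content of Lemma~\ref{lemma:operator}, Lemma~\ref{lemma:laurentToOp} and the proof of Theorem~\ref{implies}, and the further reduction to the families $t=s$ and $t=s+1$ is Theorem~\ref{inductionstep}. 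You are right, and commendably explicit, that the irreducible gap is Conjecture~\ref{conj} itself; no amount of bookkeeping closes it.

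One inaccuracy in your bookkeeping is worth flagging, because it is not purely cosmetic. Solutions of \eqref{eq:shiftAntisymmetry} are \emph{not} of the form $\bigl(\prod_{p<q}V_{k_p,k_q}\bigr)b$ with $b$ antisymmetric: the hypothesis gives $S_{k_i,k_{i+1}}V_{k_i,k_{i+1}}a=-E_{k_i}E_{k_{i+1}}^{-1}V_{k_i,k_{i+1}}a$, so $V_{k_i,k_{i+1}}a$ is only antisymmetric up to a shift, and it is $W_{k_i,k_{i+1}}a=E_{k_i}V_{k_i,k_{i+1}}a$ that is genuinely antisymmetric under the transposition. The correct normal form is $a=\prod_{p<q}W_{k_q,k_p}\,b$ as in Lemma~\ref{lemma:operator}; the discrepancy between $V$ and $W=EV$ contributes a monomial factor that is \emph{not} symmetric in the $z_j$, so it cannot simply be declared ``common to both sides'' --- it has to be tracked explicitly, as the paper does when assembling $\op$ with the prefactor $\prod_i z_i^{-2-t}$ and the factors $1-z_p+z_pz_q$. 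With that correction your symbol-matching goes through and reproduces the paper's argument; without it the identification with $P_{s,t}$ is off by a nontrivial monomial.
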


\begin{proposition}
\label{cor:mni2cni2}
Let $\mathbf{x}=(-2j+1)_{j<0}$ and $\mathbf{z}=(3n+j+1)_{j<0}$.
Under the assumption that Conjecture~\ref{prop:somuchfun} is true, it follows for all $-n \leq i \leq -1$ that 
\begin{enumerate} 
\item  $\left. {^{\mathbf{x}_i} \Delta_{k_1}^{i}}
\alpha(n;k_1,4,6,\ldots,2n)\right|_{k_1=3n+2+i}=0$,
\item  ${^{\mathbf{x}_i} \Delta_{k_1}^{i}} \alpha(n;k_1,4,6,\ldots,2n) = {^{\mathbf{z}_i} \Delta_{k_1}^{i}} \alpha(n;k_1,4,6,\ldots,2n)$; in particular $C^{(2)}_{n,i} = D^{(2)}_{n,i}$. 
\end{enumerate}
\end{proposition}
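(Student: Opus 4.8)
The plan is to derive both parts of Proposition~\ref{cor:mni2cni2} from Conjecture~\ref{prop:somuchfun} by specializing the abstract polynomial $a$ to a suitable shift of $\alpha(n;\cdot)$ and then translating the operator identity in the conjecture into statements about ${^{\mathbf{x}_i}\Delta_{k_1}^i}\alpha$ and ${^{\mathbf{z}_i}\Delta_{k_1}^i}\alpha$. The first ingredient is Lemma~\ref{lemma:inv_alpha} with $j=1$: it rewrites ${^{\mathbf{x}_i}\Delta_{k_1}^i}\alpha(n;k_1,\ldots,k_n)$ as an explicit product of ordinary difference operators $\delta^0_{x_i}\delta^1_{x_{i+1}}\cdots\delta^{-i-1}_{x_{-1}}\delta^{-i}_{k_2}\cdots\delta^{-i}_{k_n}$ applied to $\alpha(n-i;k_1,x_i,\ldots,x_{-1},k_2,\ldots,k_n)$, and similarly for the $\delta$-version. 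Setting $s=-i$ and $t=n-1$ (so $s+t-1=n-i-1=n-i-1$, matching the number of arguments after removing $k_1$), and using $x_j=-2j+1$ so that the shift operators $E^{2s+3-2i'}_{y_{i'}}$ built into Conjecture~\ref{prop:somuchfun} correspond exactly to evaluating at $x_j$, the left-hand side of the conjecture's identity should become (a shift of) ${^{\mathbf{x}_i}\Delta_{k_1}^i}\alpha(n;k_1,4,6,\ldots,2n)$ evaluated at the appropriate point, while the right-hand side becomes the corresponding expression built from $-\Delta$ operators, i.e. the ${^{\mathbf{z}_i}\Delta}$-version with $\mathbf{z}=(3n+j+1)_{j<0}$.

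For part (1), I would first observe that ${^{\mathbf{x}_i}\Delta_{k_1}^i}\alpha(n;k_1,4,6,\ldots,2n)$ is a polynomial in $k_1$ of degree $n-1$, and that the claim is that it vanishes at $k_1=3n+2+i$. The strategy is to use part (3) and (4) of Lemma~\ref{lemma:reflectcyclic} (the cyclic/reflection symmetry of inverse difference operators) to re-express this quantity in the $\delta$-world, and then recognize the evaluation point $k_1=3n+2+i$ as precisely the point where the shifted $\delta$-expression coincides, via the conjecture's identity evaluated at a coincidence point, with an expression that manifestly vanishes because too many $\delta$-operators (total degree exceeding $n-1-i$ in the relevant variable) are applied to a polynomial of bounded degree. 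Concretely, the coincidence hypothesis $y_1=\cdots=y_s=k_2=\cdots=k_t$ in Conjecture~\ref{prop:somuchfun} forces all arguments equal, and after the Lemma~\ref{lemma:inv_alpha} substitution the vanishing should drop out from the degree bound on $\alpha$, exactly as in the proofs of Propositions~\ref{lemma:mniLES} and \ref{prop:cniLES}.

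For part (2), the key point is that once part (1) holds — i.e. the polynomial ${^{\mathbf{x}_i}\Delta_{k_1}^i}\alpha(n;k_1,4,6,\ldots,2n)$ has a known root at $k_1=3n+2+i$ — one can argue that changing the lower summation constants from $\mathbf{x}_i$ to $\mathbf{z}_i$ changes the value only by a telescoping correction that is itself a value of a lower inverse-difference expression at such a root, hence zero; this is the same mechanism by which Proposition~\ref{prop:inv_identities}\eqref{prop:inv_identities1} shows ${^z\Delta_x^{-1}}$ is independent of $z$ up to a constant determined by the integrand's values. More precisely, I expect to peel off one inverse operator at a time: ${^{\mathbf{x}_i}\Delta_{k_1}^i} - {^{\mathbf{z}_i}\Delta_{k_1}^i}$ telescopes into a sum of terms each of which, after applying the already-established vanishing from part (1) at the lower levels $i+1,\ldots,-1$ (induction on $-i$), collapses. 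The equality $C^{(2)}_{n,i}=D^{(2)}_{n,i}$ is then immediate from the definitions \eqref{defc}, \eqref{defd} together with the specific identifications $x_j=-2j+1$, $z_j=(n+2)\cdot 3 + j - 5 = 3n+j+1$ for $d=2$.

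The main obstacle I anticipate is the bookkeeping in matching Conjecture~\ref{prop:somuchfun} to the inverse-difference expressions: the conjecture is stated with a particular arrangement of $E$-shifts and $\delta$ versus $-\Delta$ operators and a particular coincidence locus, and one must check that the Lemma~\ref{lemma:inv_alpha} rewriting of ${^{\mathbf{x}_i}\Delta_{k_1}^i}\alpha$ — evaluated at $k_1=d+1=3$ with the bottom row $(4,6,\ldots,2n)$ — produces exactly the left-hand side of the conjecture after the substitutions $s=-i$, $t=n-1$, $a=\alpha(n-i;\cdot)$ up to an overall sign $(-1)^{ij}$ and an overall shift. Verifying that the shift operators $E^{2s+3-2i'}_{y_{i'}}$ and $E^{2i'}_{k_{i'}}$ in the conjecture reproduce the constants $x_j=-2j+1$ and $z_j=3n+j+1$ after evaluation is a finite but delicate computation; everything else (the degree-drop vanishing, the telescoping independence of constants) follows the patterns already established in Sections~\ref{properties} and \ref{firstconj}.
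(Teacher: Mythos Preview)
Your overall architecture is right --- rewrite ${^{\mathbf{x}_i}\Delta_{k_1}^i}\alpha$ via Lemma~\ref{lemma:inv_alpha}, invoke Conjecture~\ref{prop:somuchfun}, and then prove part~(2) by induction on $-i$ using part~(1) to kill the telescoping correction terms. Part~(2) of your sketch matches the paper's argument essentially verbatim.

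However, there is a genuine gap in your plan for part~(1). First a bookkeeping slip: with $s=-i$, the correct value is $t=n$ (not $t=n-1$), since $a$ has $s+t-1$ arguments and after removing $k_1$ from $\alpha(n-i;k_1,\ldots)$ there remain $n-i-1=s+t-1$ of them, forcing $t=n$. More importantly, your proposed vanishing mechanism --- ``too many $\delta$-operators \ldots\ degree bound on $\alpha$'' --- does not work here. The expression ${^{\mathbf{x}_i}\Delta_{k_1}^i}\alpha(n;k_1,4,\ldots,2n)$ is a polynomial in $k_1$ of degree $n-1-i>0$; a degree argument cannot force it to vanish at the single point $k_1=3n+2+i$. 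Nor does the right-hand side of Conjecture~\ref{prop:somuchfun} directly produce the ${^{\mathbf{z}_i}\Delta}$-expression; that only enters later, in part~(2).

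What the paper actually does after applying the conjecture is an extra round-trip you are missing: it applies the cyclic identity \eqref{eq:alphaPropCyc} and shift invariance \eqref{eq:alphaPropShift} to rotate $k_1$ into the last slot of $\alpha(n+s;\ldots)$, and then runs Lemma~\ref{lemma:inv_alpha} \emph{backwards} (the second identity) to recognize the resulting product of $\Delta$-operators as $(-1)^{n-1}\,{^{\mathbf{w}_i}\delta_{k_1}^i}\alpha(n;4+n-i,\ldots,3n-i,k_1)$ for a specific $\mathbf{w}_i=(3n+3+i,3n+5+i,\ldots,3n+1-i)$. The vanishing at $k_1=3n+2+i$ then comes from the trivial fact ${^{z}\delta_{x}^{-1}}p(x)\big|_{x=z-1}=0$ (empty sum), since the outermost constant is $w_i=3n+3+i=k_1+1$. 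So the missing idea is: conjecture $\to$ cyclic rotation $\to$ Lemma~\ref{lemma:inv_alpha} in reverse $\to$ empty sum; no degree argument is involved.
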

\begin{proof}
According to Lemma~\ref{lemma:inv_alpha} we have
\begin{multline*}
{^{\mathbf{x}_i} \Delta_{k_1}^{i}} \alpha(n;k_1,4,6,\ldots,2n) \\
= \left. (-1)^i \prod_{j=i}^{-1} E^{-2j+1}_{y_j} \delta^{j-i}_{y_j} \prod_{j=2}^{n} E^{2 j}_{k_j} \delta^{-i}_{k_j} 
 \alpha(n-i;k_1,y_i,y_{i+1},\ldots,y_{-1},k_2,\ldots,k_n)
 \right|_{\subfl{(y_i,y_{i+1},\ldots,y_{-1})=0,}{(k_2,\ldots,k_n)=0}}.
\end{multline*}
We set $\overline{y}_j=y_{i+j-1}$ and $s=-i$ to obtain
$$
\left. (-1)^s \prod_{j=1}^{s} E^{2s+3-2j}_{\overline{y}_j} \delta^{j-1}_{\overline{y}_j}
\prod_{j=2}^{n} E^{2 j}_{k_j} \delta^{s}_{k_j}
\alpha(n+s;k_1,\overline{y}_1,\overline{y}_2,\ldots,\overline{y}_s,k_2,\ldots,k_n) \right|_{\subfl{(\overline{y}_1,\ldots,\overline{y}_s)=0,}{(k_2,\ldots,k_n)=0}}.
$$
By our assumption that Conjecture~\ref{prop:somuchfun} is true, this is equal to
$$
\left. (-1)^s \prod_{j=2}^n E^{2j}_{k_j} (- \Delta_{k_j})^s \prod_{j=1}^s E^{2n+3-2j}_{\overline{y}_j} (- \Delta_{\overline{y}_j})^{s-j}
 \alpha(n+s;k_1,k_2,\ldots,k_n,\overline{y}_1,\ldots,\overline{y}_s) \right|_{\subfl{(\overline{y}_1,\ldots,\overline{y}_s)=0,}{(k_2,\ldots,k_n)=0}}.
$$
Now we use \eqref{eq:alphaPropCyc} and \eqref{eq:alphaPropShift} to obtain 
\begin{multline*}
(-1)^{n+1} \prod_{j=2}^n E^{2j+n+s}_{k_j} (- \Delta_{k_j})^s \prod_{j=1}^s
E^{3n+3-2j+s}_{\overline{y}_j} (- \Delta_{\overline{y}_j})^{s-j} \\
\left.  \alpha(n+s;k_2,\ldots,k_n,\overline{y}_1,\ldots,\overline{y}_s,k_1)
\vphantom{\prod_{j=2}^n}\right|_{\subfl{(\overline{y}_1,\ldots,\overline{y}_s)=0,}{(k_2,\ldots,k_n)=0}}.
\end{multline*}
According to Lemma~\ref{lemma:inv_alpha}, this is 
$$
(-1)^{n+1} \;{^{\mathbf{w}_i} \delta_{k_1}^i}
\alpha(n;4+n-i,6+n-i,\ldots,3n-i,k_1) $$
where $\mathbf{w}_i=(3n+3+i,3n+5+i,\ldots,3n+1-i)$. Setting $k_1=3n+2+i$, the first assertion
now follows since ${^{x+1} \delta_{x}^{-1}} p(x)=0$.

For the second assertion we use induction with respect to $i$. In the base case
$i=-1$ note that the two sides differ by $\left.  {^{3n}\Delta_{k_1}^{-1}}\alpha(n;k_1,4,6,\ldots,2n)\right|_{k_1=4}$. By \eqref{sumExtDef} this is equal to 
\[
-\left.  {^{3}\Delta_{k_1}^{-1}}\alpha(n;k_1,4,6,\ldots,2n)\right|_{k_1=3n+1},
\]
which vanishes due to the first assertion. For $i < -1$ observe that
\begin{align*}
{^{\mathbf{x}_i} \Delta^i_{k_1}} & \alpha(n;k_1,4,6,\ldots,2n) \\
&= {^{-2i+1} \Delta_{k_1}^{-1}} \;\; {^{\mathbf{x}_{i+1}} \Delta^{i+1}_{k_1}}
\alpha(n;k_1,4,6,\ldots,2n) \\
&= - \sum_{l_1=k_1}^{-2i+1}  {^{\mathbf{x}_{i+1}} \Delta^{i+1}_{l_1}}
\alpha(n;l_1,4,6,\ldots,2n) \\
&=- \sum_{l_1=k_1}^{3n+1+i} {^{\mathbf{z}_{i+1}} \Delta^{i+1}_{l_1}}
\alpha(n;l_1,4,6,\ldots,2n)+\sum_{l_1=-2i+2}^{3n+1+i} {^{\mathbf{x}_{i+1}}
\Delta^{i+1}_{l_1}} \alpha(n;l_1,4,6,\ldots,2n),
\end{align*}
where we have used the induction hypothesis in the first sum. Now the first sum
is equal to the right-hand side in the second assertion, while the second sum is
by \eqref{sumExtDef} just the expression in the first assertion and thus
vanishes.
\end{proof} 

\section{Proof of Theorem \texorpdfstring{\ref{implies}}{1.2}}
\label{Theorem1}

Let $p(x_1,\ldots,x_n)$ be a function in $(x_1,\ldots,x_n)$ and $T \subseteq {\mathcal S}_n$ a subset of the 
symmetric group. We define
$$
(T p)(x_1,\ldots,x_n) := 
\sum_{\sigma \in T} \sgn \sigma 
\, p(x_{\sigma(1)},\ldots,x_{\sigma(n)}).
$$
If $T=\{ \sigma\}$, then we write $( T p)(x_1,\ldots,x_n)=(\sigma p)(x_1,\ldots,x_n)$. Observe that $\asym$ as defined in the introduction satisfies $\asym p(x_1,\ldots,x_n) = ({\mathcal S}_n p)(x_1,\ldots,x_n)$.
A function is said to be \emph{antisymmetric} if 
$
(\sigma p)(x_1,\ldots,x_n) = \sgn\sigma  \cdot p(x_1,\ldots,x_n)
$
for all $\sigma \in {\mathcal S}_n$. We need a couple of auxiliary results.

\begin{lemma}
\label{lemma:operator}
Let $a(z_1,\ldots,z_n)$ be a polynomial in $(z_1,\ldots,z_n)$ with 
$$
(\id + E_{z_{i+1}} E^{-1}_{z_i} S_{z_i,z_{i+1}}) V_{z_i,z_{i+1}} a(z_1,\ldots,z_{n}) = 0
$$
for $1 \le i \le n-1$.  Then there exists an antisymmetric polynomial  
$b(z_1,\ldots,z_n)$ with 
$$
a(z_1,\ldots,z_{n}) = \prod_{1 \le p < q \le n} W_{z_q,z_p} b(z_1,\ldots,z_n)
$$
where $W_{x,y} := E_x V_{x,y} = \id - E_y + E_x E_y$.
\end{lemma}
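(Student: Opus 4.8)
The plan is to define $b$ by inverting, as an operator on the polynomial ring, the product $\mathcal{W} := \prod_{1 \le p < q \le n} W_{z_q,z_p}$ occurring in the statement, and then to prove that $b := \mathcal{W}^{-1} a$ is antisymmetric. As a first step I would recast the hypothesis in a symmetric form. Since $W_{x,y} = E_x V_{x,y}$, we have $V_{z_i,z_{i+1}} = E_{z_i}^{-1} W_{z_i,z_{i+1}}$, and using the elementary relations $S_{z_i,z_{i+1}} E_{z_i} = E_{z_{i+1}} S_{z_i,z_{i+1}}$, $S_{z_i,z_{i+1}} E_{z_i}^{-1} = E_{z_{i+1}}^{-1} S_{z_i,z_{i+1}}$ and $S_{z_i,z_{i+1}} W_{z_i,z_{i+1}} = W_{z_{i+1},z_i} S_{z_i,z_{i+1}}$, a short computation gives $(\id + E_{z_{i+1}} E_{z_i}^{-1} S_{z_i,z_{i+1}}) V_{z_i,z_{i+1}} a = E_{z_i}^{-1}\bigl(W_{z_i,z_{i+1}} a + W_{z_{i+1},z_i} S_{z_i,z_{i+1}} a\bigr)$. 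Hence the hypothesis is equivalent to
\[
W_{z_i,z_{i+1}} a = - W_{z_{i+1},z_i}\, S_{z_i,z_{i+1}} a \qquad (1 \le i \le n-1).
\]

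Next I would record that each operator $W_{x,y} = \id + E_y \Delta_x$ is invertible on $\mathbb{C}[z_1,\ldots,z_n]$: since $E_y \Delta_x$ strictly lowers the degree in $x$, the Neumann series $\sum_{k \ge 0} (-1)^k (E_y \Delta_x)^k$ terminates on any polynomial and provides a two-sided inverse which again preserves polynomiality. As the factors of $\mathcal{W}$ are pairwise commuting shift operators, $\mathcal{W}$ itself is invertible on $\mathbb{C}[z_1,\ldots,z_n]$ with inverse $\prod_{p<q} W_{z_q,z_p}^{-1}$, so $b := \mathcal{W}^{-1} a$ is a well-defined polynomial and $a = \mathcal{W} b$.

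The core of the argument is the conjugation identity $S_{z_i,z_{i+1}}\, \mathcal{W} = W_{z_i,z_{i+1}}\, W_{z_{i+1},z_i}^{-1}\, \mathcal{W}\, S_{z_i,z_{i+1}}$ for each $i$. To prove it I would split the factors of $\mathcal{W}$ into three groups: (a) the single factor $W_{z_{i+1},z_i}$; (b) the pairs $W_{z_i,z_p} W_{z_{i+1},z_p}$ for $p < i$ together with $W_{z_q,z_i} W_{z_q,z_{i+1}}$ for $q > i+1$; and (c) all remaining factors, which involve neither $z_i$ nor $z_{i+1}$. Then $S_{z_i,z_{i+1}}$ commutes with group (c) (it touches only $z_i,z_{i+1}$), commutes with group (b) (it merely interchanges the two commuting factors inside each pair), and sends $W_{z_{i+1},z_i}$ to $W_{z_i,z_{i+1}}$; multiplying these through yields the identity. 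Feeding this and $a = \mathcal{W} b$ into the reformulated hypothesis $W_{z_i,z_{i+1}} a = -W_{z_{i+1},z_i} S_{z_i,z_{i+1}} a$, the invertible operator $W_{z_i,z_{i+1}}\mathcal{W}$ cancels from both sides and leaves $S_{z_i,z_{i+1}} b = -b$ for every $i$. Since the adjacent transpositions generate $\mathcal{S}_n$, $b$ is antisymmetric, as required.

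I expect the main obstacle to be the bookkeeping in the conjugation identity — in particular, verifying carefully that every factor of $\mathcal{W}$ other than $W_{z_{i+1},z_i}$ really commutes with $S_{z_i,z_{i+1}}$ — together with the routine but indispensable observation that $\mathcal{W}$ is invertible on the polynomial ring, which is what makes the final cancellation legitimate.
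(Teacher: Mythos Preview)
Your proof is correct and follows essentially the same route as the paper's: both reformulate the hypothesis as $W_{z_i,z_{i+1}}a = -W_{z_{i+1},z_i}S_{z_i,z_{i+1}}a$, both invoke the Neumann series $\sum_{k\ge 0}(-1)^k(E_y\Delta_x)^k$ to invert $W_{x,y}$ on polynomials, and both end up with the same $b=\prod_{p<q}W_{z_q,z_p}^{-1}a$. The only cosmetic difference is that the paper first shows the auxiliary polynomial $c:=\prod_{p<q}W_{z_p,z_q}\,a$ is antisymmetric and then passes to $b=\prod_{p\neq q}W_{z_p,z_q}^{-1}c$ via the manifestly symmetric operator $\prod_{p\neq q}W_{z_p,z_q}^{-1}$, whereas you argue directly with the conjugation identity for $\mathcal{W}$; these are two packagings of the same commutation bookkeeping.
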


\begin{proof}
By assumption, we have 
\begin{align*}
S_{z_i,z_{i+1}} W_{z_i,z_{i+1}} a(\mathbf{z}) = E_{z_{i+1}} S_{z_i,z_{i+1}} 
V_{z_i,z_{i+1}} a(\mathbf{z}) = -E_{z_i} V_{z_i,z_{i+1}}a(\mathbf{z}) =
-W_{z_i,z_{i+1}}a(\mathbf{z}).
\end{align*}
This implies that 
$$
c(z_1,\ldots,z_{n}) := \prod_{1 \le p < q \le n} W_{z_p,z_q} a(z_1,\ldots,z_{n})
$$ is an antisymmetric polynomial. Now observe that $W_{x,y}=\id + E_y \Delta_x$ is
invertible on $\mathbb{C}[x,y]$, to be more concrete
$W^{-1}_{x,y} = \sum\limits_{i=0}^{\infty} (-1)^i E_y^{i} \Delta^{i}_x$.
Hence, 
$b(z_1,\ldots,z_{n}) := \prod\limits_{1 \le p \not= q \le n} W^{-1}_{z_p,z_q}
c(z_1,\ldots,z_{n})$
is an antisymmetric polynomial with
$a(z_1,\ldots,z_{n}) = \prod\limits_{1 \le p < q \le n} W_{z_q,z_p} b(z_1,\ldots,z_{n})$.
\end{proof}

\begin{lemma}
\label{lemma:laurentToOp}
Suppose
$
\op(x_1,\ldots,x_n)$ is a Laurent polynomial and $a(z_1,\ldots,z_n)$ is an antisymmetric function. If there exists a non-empty subset $T$ 
of ${\mathcal S}_{n}$ with $(T \op)(x_1,\ldots,x_n)=0$, then
$$
\left. \left( \op(E_{z_1},\ldots,E_{z_n}) a(z_1,\ldots,z_n) \right) \right|_{z_1=z_2=\ldots=z_n}=0.
$$
\end{lemma}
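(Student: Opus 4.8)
The plan is to reduce the statement to a statement about the antisymmetrizer applied to a single product of the form $a(z_1,\dots,z_n)\cdot m(z_1,\dots,z_n)$, where $m$ is a Laurent monomial, and then to exploit the fact that $a$ is antisymmetric together with the hypothesis $(T\op)=0$. First I would write $\op(x_1,\dots,x_n)=\sum_{\mathbf{k}} c_{\mathbf{k}}\, x_1^{k_1}\cdots x_n^{k_n}$ as a finite sum of monomials, so that $\op(E_{z_1},\dots,E_{z_n})a(z_1,\dots,z_n)=\sum_{\mathbf{k}} c_{\mathbf{k}}\, a(z_1+k_1,\dots,z_n+k_n)$, since $E_{z_j}$ acts by shifting the $j$-th argument. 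Evaluating at $z_1=\cdots=z_n=:z$ gives $\sum_{\mathbf{k}} c_{\mathbf{k}}\, a(z+k_1,\dots,z+k_n)$, which I want to show vanishes identically in $z$.

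The key device is to bring the symmetric group into play. Because $a$ is antisymmetric, for any $\si\in\mathcal S_n$ we have $a(z+k_1,\dots,z+k_n)=\sgn\si\cdot a(z+k_{\si^{-1}(1)},\dots,z+k_{\si^{-1}(n)})$. Hence, for a fixed $\si\in T$, I can rewrite $\sum_{\mathbf{k}} c_{\mathbf{k}}\, a(z+k_1,\dots,z+k_n)$ by reindexing the monomials of $\op$ via $\si$; averaging appropriately over $T$, the hypothesis $(T\op)(x_1,\dots,x_n)=\sum_{\si\in T}\sgn\si\, \op(x_{\si(1)},\dots,x_{\si(n)})=0$ translates (after substituting $x_j\mapsto E_{z_j}$ and using antisymmetry of $a$ to absorb each $\sgn\si$) into the statement that $|T|$ copies of $\op(E_{z_1},\dots,E_{z_n})a$ sum to the operator $(T\op)(E_{z_1},\dots,E_{z_n})$ applied to $a$, which is $0$. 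More precisely: for each $\si\in T$, applying the operator identity $\op(E_{z_{\si(1)}},\dots,E_{z_{\si(n)}})a(z_1,\dots,z_n)$ and then specializing $z_1=\cdots=z_n$ gives the same value as $\op(E_{z_1},\dots,E_{z_n})a$ specialized, up to the sign $\sgn\si$ coming from antisymmetry of $a$ under the permutation of arguments; summing $\sgn\si$ times these equal specialized values over $\si\in T$ yields $|T|$ times the desired specialization on one side and $\big((T\op)(E_{z_1},\dots,E_{z_n})a\big)\big|_{z_1=\cdots=z_n}=0$ on the other. Since $|T|\neq 0$, the specialization of $\op(E_{z_1},\dots,E_{z_n})a$ vanishes.

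The step I expect to require the most care is the bookkeeping of how the substitution $x_j\mapsto E_{z_j}$ interacts with the permutation of \emph{variables} versus the permutation of \emph{arguments} of $a$: one must check that $\op(x_{\si(1)},\dots,x_{\si(n)})\big|_{x_j=E_{z_j}}$ applied to $a(z_1,\dots,z_n)$, after setting all $z_j$ equal, really does equal $\sgn\si$ times $\op(E_{z_1},\dots,E_{z_n})a$ evaluated at the diagonal — the point being that relabeling the commuting operators $E_{z_1},\dots,E_{z_n}$ and then diagonalizing is the same as relabeling the arguments of $a$, whence the antisymmetry of $a$ contributes exactly $\sgn\si$. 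Once this identification is in place, the vanishing is immediate from $(T\op)=0$ and $|T|\ne 0$, and no genuine computation is needed.
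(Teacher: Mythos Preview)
Your proposal is correct and follows essentially the same argument as the paper: expand $\op$ into monomials, specialize to the diagonal, use antisymmetry of $a$ to insert a factor $\frac{1}{|T|}\sum_{\sigma\in T}\sgn\sigma$, and then reinterpret the resulting expression as $\frac{1}{|T|}\big((T\op)(E_{z_1},\ldots,E_{z_n})\,a\big)\big|_{z_1=\cdots=z_n}=0$. The bookkeeping step you flag---that permuting the shift operators and then restricting to the diagonal is the same as permuting the arguments of $a$, contributing $\sgn\sigma$---is precisely the chain of equalities the paper writes out explicitly.
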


\begin{proof}
First observe that the antisymmetry of $a(z_1,\ldots,z_n)$ implies
$$
(T' a)(z_1,\ldots,z_n) = \sum_{\sigma \in T'} \sgn \sigma a(z_{\sigma(1)},\ldots,z_{\sigma(n)}) = |T'| a(z_1,\ldots,z_n).
$$
for any subset $T' \subseteq {\mathcal S}_n$. Letting
$$
\op(x_1,\ldots,x_n)= \sum_{(i_1,\ldots,i_n) \in \mathbb{Z}^n} c_{i_1,\ldots,i_n} x_1^{i_1} x_2^{i_2} \cdots x_n^{i_n},
$$
we observe that 
\begin{align*}
& \left. \left( \op(E_{z_1},\ldots,E_{z_n}) a(z_1,\ldots,z_n) \right)
\right|_{(z_1,\ldots,z_n)=(d,\ldots,d)} \\
&\qquad=   \sum_{(i_1,\ldots,i_n) \in\mz^n}
c_{i_1,\ldots,i_n} a(i_1+d,\ldots,i_n+d)
=  \frac{1}{|T|} \sum_{(i_1,\ldots,i_n)\in\mz^n}
c_{i_1,\ldots,i_n} (T^{-1} a)(i_1+d,\ldots,i_n+d)
\end{align*}
with $T^{-1} = \{\sigma^{-1} | \sigma \in T\}$, 
since $(i_1,\ldots,i_n) \mapsto a(i_1+d,\ldots,i_n+d)$ is also an antisymmetric function.
This is equal to
\begin{align*}
& \frac{1}{|T|} \sum_{(i_1,\ldots,i_n)\in\mz^n} c_{i_1,\ldots,i_n}
\sum_{\sigma \in T} \sgn\sigma \left. E^{i_{\sigma^{-1}(1)}}_{z_1} \dots
E^{i_{\sigma^{-1}(n)}}_{z_n} a(z_1,\ldots,z_n) \right|_{(z_1,\ldots,z_n)=(d,\ldots,d)} \\
&\qquad= \frac{1}{|T|} \sum_{(i_1,\ldots,i_n)\in\mz^n} c_{i_1,\ldots,i_n}
\sum_{\sigma \in T} \sgn\sigma \left. E^{i_{1}}_{z_{\sigma(1)}} \dots E^{i_{n}}_{z_{\sigma(n)}}
a(z_1,\ldots,z_n) \right|_{(z_1,\ldots,z_n)=(d,\ldots,d)} \\
&\qquad= \left. \frac{1}{|T|} \left[(T \op)(E_{z_1},\ldots,E_{z_n}) \right]
a(z_1,\ldots,z_n) \right|_{(z_1,\ldots,z_n)=(d,\ldots,d)} = 0.
\end{align*}
\end{proof}

Now we are in the position to prove Theorem~\ref{implies}.

\begin{proof}[Proof of Theorem~\ref{implies}]  In order to prove \eqref{bniFormula}, it suffices to show that Conjecture~\ref{prop:somuchfun} holds under the theorem's assumptions.
We set
\begin{align*}
\overline{\op}(z_1,\ldots,z_{s+t-1}) &:= \prod_{i=1}^{s} z_i^{2s+3-2i} (1-z_i^{-1})^{i-1} 
\prod_{i=s+1}^{s+t-1} z_i^{2i-2s+2} (1-z_i^{-1})^s \\
&\qquad - \prod_{i=1}^{t-1} z_i^{2i+2} (1-z_i)^{s} \prod_{i=t}^{s+t-1} z_i^{4t+1-2i} 
(1-z_i)^{s+t-1-i}
\end{align*}
and observe that the claim of Conjecture~\ref{prop:somuchfun} is that $\overline{\op}(E_{z_1},\ldots,E_{z_{s+t-1}}) a(z_1,\ldots,z_{s+t-1})$
vanishes if $z_1=\ldots=z_{s+t-1}$. According to Lemma~\ref{lemma:operator}, there exists an antisymmetric polynomial 
$b(z_1,\ldots,z_{s+t-1})$ with 
$$
a(z_1,\ldots,z_{s+t-1}) = \prod_{1 \le p < q \le s+t-1} W_{z_q,z_p} b(z_1,\ldots,z_{s+t-1}).
$$
Thus, let us deduce that $\op(E_{z_1},\ldots,E_{z_{s+t-1}}) b(z_1,\ldots,z_{s+t-1})=0$ if $z_1=\ldots=z_{s+t-1}$ where 
$$
\op(z_1,\ldots,z_{s+t-1}):= \overline{\op}(z_1,\ldots,z_{s+t-1}) \prod_{1 \leq p < q \leq s+t-1} (1-z_p + z_p z_q) \prod_{i=1}^{s+t-1} z_i^{-2-t}.
$$
Now, Lemma~\ref{lemma:laurentToOp} implies that it suffices to show $\asym \op(z_1,\ldots,z_{s+t-1}) = 0$.
Observe that 
$$
\op(z_1,\ldots,z_{s+t-1}) = \overline{P}_{s,t}(z_1,\ldots,z_{s+t-1}) - \overline{P}_{s,t}(z_{s+t-1}^{-1},\ldots,z_1^{-1})  \prod_{i=1}^{s+t-1} z_i^{s+t-2}
$$
where $\overline{P}_{s,t}(z_1,\ldots,z_{s+t-1}) = P_{s,t}(z_1,\ldots,z_{s+t-1}) \prod\limits_{1 \le i < j \le s+t-1} (z_j - z_i)$ and 
$P_{s,t}(z_1,\ldots,z_{s+t-1})$ is as defined in Conjecture~\ref{conj}.
Furthermore, 
\begin{align*}
\asym \op(z_1,\ldots,z_{s+t-1}) &= R_{s,t}(z_1,\ldots,z_{s+t-1}) \prod_{1 \le i < j \le s+t-1} (z_j-z_i) \\
& \quad - 
R_{s,t}(z_{s+t-1}^{-1},\ldots,z_1^{-1}) \prod_{1 \le i < j \le s+t-1} (z_{s+t-j}^{-1}-z_{s+t-i}^{-1}) \prod_{i=1}^{s+t-1} z_i^{s+t-2}
\end{align*}
where $R_{s,t}(z_1,\ldots,z_{s+t-1})$ is also defined in Conjecture~\ref{conj}. Since $R_{s,t}(z_1,\ldots,z_{s+t-1})$ is symmetric we have that
$\asym \op(z_1,\ldots,z_{s+t-1}) = 0$ follows once it is shown that 
$R_{s,t}(z_1,\ldots,z_{s+t-1}) = R_{s,t}(z_1^{-1},\ldots,z_{s+t-1}^{-1})$.
\end{proof}

\section{Proof of Theorem \texorpdfstring{\ref{inductionstep}}{1.3}}
\label{Theorem2}

For integers $s,t \ge 1$, we define the following two rational functions:
\begin{align*}
S_{s,t}(z;z_1,\ldots,z_{s+t-2})&:= z^{2s-t-1} \prod_{i=1}^{s+t-2} \frac{(1-z+ z_i z)(1-z_i^{-1})}{(z_i-z)}, \\
T_{s,t}(z;z_1,\ldots,z_{s+t-2}) &:= (1-z^{-1})^s z^{t-2} \prod_{i=1}^{s+t-2} \frac{1-z_i + z_i z}{(z-z_i) z_i}.
\end{align*}
Based on these two functions, we define two operators on functions $f$ in $s+t-2$ variables that transform them into functions in $(z_1,\ldots,z_{s+t-1})$:
\begin{align*}
\ps_{s,t}[f]&:= S_{s,t}(z_{1};z_2,\ldots,z_{s+t-1}) \cdot f(z_2,\ldots,z_{s+t-1}), \\
\pt_{s,t}[f]&:=T_{s,t}(z_{s+t-1};z_1,\ldots,z_{s+t-2}) \cdot f(z_1,\ldots,z_{s+t-2}).
\end{align*}
The definitions are motivated by the fact that $P_{s,t}(z_1,\ldots,z_{s+t-1})$ as defined in Conjecture~\ref{conj} satisfies the two recursions
$$
P_{s,t}=\ps_{s,t}[P_{s-1,t}]  \quad \text{ and } \quad P_{s,t} = \pt_{s,t}[P_{s,t-1}].
$$
We also need the following two related operators, which are again defined on functions $f$ in $s+t-2$ variables:
\begin{align*}
\qs_{s,t}[f]&:= S_{s,t}(z_{s+t-1}^{-1};z_{s+t-2}^{-1},z_{s+t-3}^{-1},\ldots,z_1^{-1}) \cdot f(z_1,\ldots,z_{s+t-2}), \\
\qt_{s,t}[f]&:=T_{s,t}(z_{1}^{-1};z_{s+t-1}^{-1},z_{s+t-2}^{-1},\ldots,z_2^{-1}) \cdot f(z_2,\ldots,z_{s+t-1}).
\end{align*}
Note that if we set $Q_{s,t}(z_1,\ldots,z_{s+t-1}):=P_{s,t}(z_{s+t-1}^{-1},\ldots,z_1^{-1})$, then 
$$
Q_{s,t}=\qs_{s,t}[Q_{s-1,t}]  \quad \text{ and } \quad Q_{s,t} = \qt_{s,t}[Q_{s,t-1}].
$$
From the definitions, one can deduce the following commutation properties; the proof is straightforward and left to the reader.

\begin{lemma}  
\label{commute}
Let $s,t$ be positive integers.
\begin{enumerate}
\item If $s, t \ge 1$, then $\ps_{s,t} \circ \pt_{s-1,t} = \pt_{s,t} \circ \ps_{s,t-1}$ and $\qs_{s,t} \circ \qt_{s-1,t} = \qt_{s,t} \circ \qs_{s,t-1}$.
\item If $t \ge 2$, then $\pt_{s,t} \circ \qt_{s,t-1} = \qt_{s,t} \circ \pt_{s,t-1}$.
\end{enumerate}
\end{lemma}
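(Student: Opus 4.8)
The plan is to exploit that each of $\ps_{s,t}$, $\pt_{s,t}$, $\qs_{s,t}$, $\qt_{s,t}$ acts on a function $f$ simply by multiplying by a fixed rational function of $z_1,\ldots,z_{s+t-1}$ and relabelling the variables fed into $f$. Hence the composite of any two of them is again of this shape: its multiplier is the product of the two individual multipliers (each evaluated at the appropriate $z_i$) and its argument-relabelling is the composite of the two. For each of the three asserted identities, both sides applied to an arbitrary $f$ yield a rational prefactor times the \emph{same} evaluation of $f$, so the statement collapses to three identities between rational functions, each of which I would verify by checking that the quotient of the two sides telescopes to $1$.

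Concretely, write $n=s+t-1$. For the first identity of part~(1), unwinding the definitions shows that both $\ps_{s,t}[\pt_{s-1,t}[f]]$ and $\pt_{s,t}[\ps_{s,t-1}[f]]$ equal a rational function times $f(z_2,\ldots,z_{n-1})$, and the claim reduces to
\begin{multline*}
S_{s,t}(z_1;z_2,\ldots,z_n)\,T_{s-1,t}(z_n;z_2,\ldots,z_{n-1}) \\
= T_{s,t}(z_n;z_1,\ldots,z_{n-1})\,S_{s,t-1}(z_1;z_2,\ldots,z_{n-1}).
\end{multline*}
In the quotient of the two sides the powers of $z_1$ contribute $z_1^{2s-t-1}/z_1^{2s-t}$, the powers of $(1-z_n^{-1})$ contribute $(1-z_n^{-1})^{s-1}/(1-z_n^{-1})^{s}$, the factors $z_n^{t-2}$ cancel, and the extra boundary factor of the $S$-product on the left together with that of the $T$-product on the right combine with these two imbalances to give $1$, all other factors coinciding termwise.

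For the second identity of part~(1) I would avoid a second computation by recording the conjugacy relations
\[
\qs_{s,t}=\iota_n\circ\ps_{s,t}\circ\iota_{n-1},\qquad
\qt_{s,t}=\iota_n\circ\pt_{s,t}\circ\iota_{n-1},
\]
where $\iota_m$ denotes the involution $(\iota_m g)(z_1,\ldots,z_m):=g(z_m^{-1},\ldots,z_1^{-1})$; both follow at once from the definitions of $S_{s,t}$, $T_{s,t}$ and of the four operators. Since $\iota$ is an involution, conjugating the first identity of part~(1) gives
\begin{align*}
\qs_{s,t}\circ\qt_{s-1,t}
&= \iota_n\circ(\ps_{s,t}\circ\pt_{s-1,t})\circ\iota_{n-2} \\
&= \iota_n\circ(\pt_{s,t}\circ\ps_{s,t-1})\circ\iota_{n-2}
= \qt_{s,t}\circ\qs_{s,t-1}.
\end{align*}

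Finally, part~(2) is handled exactly like the first identity of part~(1): with $f$ a function of $n-2$ variables, both $\pt_{s,t}[\qt_{s,t-1}[f]]$ and $\qt_{s,t}[\pt_{s,t-1}[f]]$ equal a rational prefactor times $f(z_2,\ldots,z_{n-1})$, and the claim reduces to
\begin{multline*}
T_{s,t}(z_n;z_1,\ldots,z_{n-1})\,T_{s,t-1}(z_1^{-1};z_{n-1}^{-1},\ldots,z_2^{-1}) \\
= T_{s,t}(z_1^{-1};z_n^{-1},\ldots,z_2^{-1})\,T_{s,t-1}(z_n;z_2,\ldots,z_{n-1}),
\end{multline*}
again a routine cancellation once the inverses in the arguments of the second factor on each side are cleared. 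None of this is deep; the only delicate point --- and the place where an error is most likely to slip in --- is the bookkeeping of the shifts $s\mapsto s-1$, $t\mapsto t-1$ and the accompanying renumbering of the $z_i$ when unwinding the operator definitions.
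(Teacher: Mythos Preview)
Your proposal is correct and is precisely the direct verification the paper has in mind when it says the proof ``is straightforward and left to the reader.'' The unwinding of the operator compositions, the reduction to the displayed identities between the $S_{s,t}$- and $T_{s,t}$-factors, and the termwise cancellations all check out. The conjugacy shortcut via $\iota_m$ for the second identity of part~(1) is a clean way to avoid repeating the bookkeeping, and is consistent with the paper's observation that $Q_{s,t}(z_1,\ldots,z_{s+t-1})=P_{s,t}(z_{s+t-1}^{-1},\ldots,z_1^{-1})$.
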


Moreover, we need the following identities, which follow from the fact that $S_{s,t}(z;z_1,\ldots,z_{s+t-2})$  and $T_{s,t}(z;z_1,\ldots,z_{s+t-2})$ are symmetric in $z_1,\ldots,z_{s+t-2}$ (the symbol $\widehat{z_i}$ indicates that $z_i$ is missing from the argument):
\begin{equation}
\label{step}
\begin{aligned}
\sym \ps_{s,t}[f]&= \sum_{i=1}^{s+t-1} S_{s,t}(z_{i};z_1,\ldots,\widehat{z_i},\ldots,z_{s+t-1}) \sym f(z_1,\ldots,\widehat{z_i},\ldots,z_{s+t-1}), \\
\sym \pt_{s,t}[f] &=\sum_{i=1}^{s+t-1} T_{s,t}(z_{i};z_1,\ldots,\widehat{z_i},\ldots,z_{s+t-1}) \sym f(z_1,\ldots,\widehat{z_i},\ldots,z_{s+t-1}), \\
\sym \qs_{s,t}[f]&= \sum_{i=1}^{s+t-1} S_{s,t}(z_{i}^{-1};z_1^{-1},\ldots,\widehat{z_i^{-1}},\ldots, z_{s+t-1}^{-1}) \sym f(z_1,\ldots,\widehat{z_i},\ldots, z_{s+t-1}), \\
\sym \qt_{s,t}[f]&=\sum_{i=1}^{s+t-1} T_{s,t}(z_{i}^{-1};z_1^{-1},\ldots,\widehat{z_i^{-1}},\ldots, z_{s+t-1}^{-1}) \sym f(z_1,\ldots,\widehat{z_i},\ldots, z_{s+t-1}).
\end{aligned}
\end{equation}

We consider words $w$ over the alphabet ${\mathcal A}:=\{\ps, \pt, \qs, \qt\}$ and let $|w|_S$ denote the number of occurrences of $\ps$ and $\qs$ in the word and $|w|_T$ denote the number of occurrences of $\pt$ and $\qt$. It is instructive to interpret these words as labelled lattice paths with starting point in the origin, step set $\{(1,0),(0,1)\}$ and labels $P, Q$. The letters $\ps$ and $\qs$ correspond to $(1,0)$-steps labelled with $P$ and $Q$, respectively, while the letters 
$\pt$ and $\qt$ correspond to $(0,1)$-steps. With this interpretation, $(|w|_S,|w|_T)$ is the endpoint of the path (see Figure \ref{fig:pq-path1}).

\begin{figure}[ht]
\begin{center}
\scalebox{1.5}{
\includegraphics{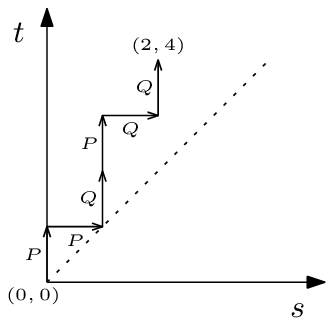}
}
\end{center}
\caption{Labelled lattice path corresponding to $w=(\pt,\ps,\qt,\pt,\qs,\qt)$.\label{fig:pq-path1}}
\end{figure} 
To every word $w$ of length $n$, we assign a rational function $F_{w}(z_1,\ldots,z_{n+1})$ as follows:  If $w$ is the empty word, then $F_{w}(z_1):=1$. Otherwise, if $L \in {\mathcal A}$ and $w$ is a word over ${\mathcal A}$, we set 
$$F_{w L}:= L_{|w L |_S+1,|w L|_T+1} [F_{w}].$$
For example, the rational function assigned to $w$ in Figure \ref{fig:pq-path1} is
\[
F_{w}(z_1,\ldots,z_7) = \qt_{3,5} \circ \qs_{3,4} \circ \pt_{2,4} \circ \qt_{2,3} \circ \ps_{2,2} \circ \pt_{1,2}[1]. 
\]
In this context, Lemma~\ref{commute} has the following meaning: on the one hand, we may swap two consecutive steps with the same label, and, one the other hand, we may swap two consecutive $(0,1)$-steps without changing the corresponding rational functions. For example, the rational functions corresponding to the words in Figure \ref{fig:pq-path1} and Figure \ref{fig:pq-path2} coincide. 
\begin{figure}[ht]
\begin{center}
\scalebox{1.5}{
\includegraphics{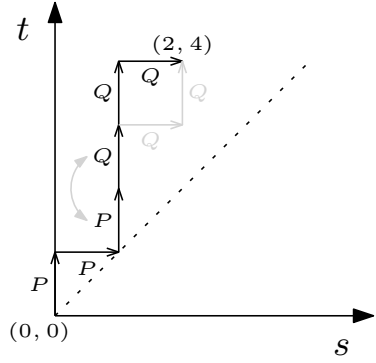}
}
\end{center}
\caption{Labelled lattice path corresponding to $\tilde{w}=(\pt,\ps,\pt,\qt,\qt,\qs)$.\label{fig:pq-path2}}
\end{figure}

\begin{proof}[Proof of Theorem~\ref{inductionstep}]
We assume 
\begin{equation}
\label{assumption}
R_{s,t}(z_1,\ldots,z_{s+t-1}) = R_{s,t}(z_1^{-1},\ldots,z_{s+t-1}^{-1})
\end{equation}
if $t=s$ and $t=s+1$. We show the following more general statement: Suppose 
$w_1, w_2$ are two words over ${\mathcal A}$ with $|w_1|_S=|w_2|_S$  and $ |w_1|_T=|w_2|_T$, and every prefix 
$w'_i$ of $w_i$ fulfills $|w'_i|_S \le  |w'_i|_T$, $i=1,2$.  (In the lattice paths language this means that $w_1$ and $w_2$ are both prefixes of
Dyck paths sharing the same endpoint; there is no restriction on the labels $P$ and $Q$.) Then
\begin{equation}
\label{id:general}
\sym F_{w_1} = \sym F_{w_2}.
\end{equation}
The assertion of the theorem then follows since $F_w = P_{|w|_S+1,|w|_T+1}$ if $w$ is a word over $\{\ps, \pt\}$ and   
$F_w = Q_{|w|_S+1,|w|_T+1}$ if $w$ is a word over $\{\qs,\qt\}$, and therefore
\begin{align*}
R_{s,t}(z_1,\ldots,z_{s+t-1}) &= \sym P_{s,t}(z_1,\ldots,z_{s+t-1}) = \sym Q_{s,t}(z_1,\ldots,z_{s+t-1})\\
 &= \sym P_{s,t}(z_{s+t-1}^{-1},\ldots,z_1^{-1})= R_{s,t}(z_1^{-1},\ldots,z_{s+t-1}^{-1}).
\end{align*}
The proof is by induction with respect to the length of the words; there is nothing to prove if the words are empty. Otherwise let 
$w_1, w_2$ be two words over ${\mathcal A}$ 
with $|w_1|_S=|w_2|_S=:s-1$  and $ |w_1|_T=|w_2|_T=:t-1$, and every prefix 
$w'_i$ of $w_i$ fulfills $|w'_i|_S \le  |w'_i|_T$, $i=1,2$. 
Note that the induction hypothesis and \eqref{step} imply that $\sym F_{w_i}$ only depends on the last letter of $w_i$ (and on $s$ and $t$ of course).
Thus the assertion follows if the last letters of 
$w_1$ and $w_2$ coincide; we assume that they differ in the following.

If $s=t$, then the assumption on the prefixes implies that the last letters 
of $w_1$ and $w_2$ are in $\{\ps,\qs\}$. W.l.o.g.\ we assume $w_1= w_1' \ps$ and $w_2 = w_2' \qs$. By the induction hypothesis and \eqref{step}, we have 
$\sym F_{w_1} = \sym P_{s,s}$ and $\sym F_{w_2} = \sym Q_{s,s}$. The assertion now follows from \eqref{assumption}, since $\sym P_{s,s}(z_1,\ldots,z_{2s-1})=R_{s,s}(z_1,\ldots,z_{2s-1})$ and 
$\sym Q_{s,s}(z_1,\ldots,z_{2s-1})=R_{s,s}(z_1^{-1},\ldots,z_{2s-1}^{-1})$.

If $s < t$, we show that we may assume that the last letters  of $w_1$ and $w_2$ are in $\{\pt, \qt\}$: if this is not true for the last letter $L_1$ of $w_i$, we may at least assume by the induction hypothesis and 
\eqref{step} that the penultimate letter $L_2$ is in $\{\pt,\qt\}$; to be more precise, we require $L_2=\pt$ if $L_1=\ps$ and $L_2=\qt$ if 
$L_1=\qs$; now, according to Lemma~\ref{commute}, we can interchange the last and the penultimate letter in this case. 

If $t=s+1$, then \eqref{id:general}
now follows from \eqref{assumption} in a similar fashion as in the case when $s=t$.

If $s+1 < t$, we may assume w.l.o.g.\ that the last letter of $w_1$ is $\pt$ and the last letter of $w_2$ is $\qt$. By the induction hypothesis 
and \eqref{step}, we may assume that the penultimate letter of $w_1$ is $\qt$. According to Lemma~\ref{commute}, we can interchange the last
and the penultimate letter of $w_1$ and the assertion follows also in this case.
\end{proof}

\section{Some remarks on the case \texorpdfstring{$s=0$}{s = 0} in Conjecture \texorpdfstring{\ref{conj}}{1.1}}

If $s=0$ in Conjecture~\ref{conj}, then the rational function simplifies to 
\begin{equation}
\label{original}
\prod_{1 \le i < j \le n} \frac{z_i^{-1} + z_j -1}{1-z_i z_j^{-1}}
\end{equation}
where $n=t-1$. This raises the question of whether there are also other 
rational functions $T(x,y)$ such that symmetrizing $\prod\limits_{1 \le i < j \le n} T(z_i,z_j)$ leads to a Laurent polynomial that
is invariant under replacing $z_i$ by $z_i^{-1}$. Computer experiments suggest that this is the case for 
\begin{equation}
\label{R}
T(x,y) = \frac{[a(x^{-1}+y)+c][b(x+y^{-1})+c]}{1-x y^{-1}} + a b x^{-1} y +d
\end{equation}
where $a,b,c,d \in \mathbb{C}$. (Since $T(x,y)=T(y^{-1},x^{-1})$ it is obvious that the symmetrized function is invariant under replacing all $z_i$ simultaneously by 
$z_i^{-1}$.) 

In case $a=0$ it can be shown with a degree argument that symmetrizing leads to a function that does not depend on $z_1,z_2,\ldots,z_n$. (In fact, this is also true for $\prod\limits_{1 \le i < j \le n} \frac{A z_i z_j 
+ B z_i + C z_j + D}{z_j - z_i}$, and our case is obtained by specializing $A=b c,B=-d,C=c^2+d,D=bc$.)

In case $T(x,y)=\frac{x^{-1}+y}{1-x y^{-1}}$ (which is obtained from the above function by setting $b=d=0$ then dividing by $c$ and setting $a=1, c=0$ afterwards) this is also easy to see, since the symmetrized function can be computed explicitly as follows:
\begin{align*}
\sym & \prod_{1 \le i < j \le n} \frac{z_i^{-1} + z_j}{1-z_i z_j^{-1}} 
= \sym \prod_{1 \le i < j \le n} \frac{z_i^{-1}  z_j (1+z_i z_j)}{z_j - z_i} \\
&= \prod_{1 \le i < j \le n} (1 + z_i z_j) \prod_{i=1}^{n} z_i^{-n+1} 
\sym  \frac{\prod_{i=1}^{n} z_i^{2i-2}}{\prod_{1 \le i < j \le n} (z_j - z_i)} \\
&= \prod_{1 \le i < j \le n} (1 + z_i z_j) \prod_{i=1}^{n} z_i^{-n+1} \frac{\det_{1 \le i, j \le n} ((z_i^{2})^{j-1})}{\prod_{1 \le i < j \le n} (z_j - z_i)} \\
&= \prod_{1 \le i < j \le n} (1 + z_i z_j) \prod_{i=1}^{n} z_i^{-n+1} \prod_{1 \le i < j \le n} \frac{z_j^{2} - z_i^{2}}{z_j-z_i} =  \prod_{1 \le i < j \le n} (1 + z_i z_j) (z_i + z_j) \prod_{i=1}^{n} z_i^{-n+1}.
\end{align*}

We come back to \eqref{original}. In our computer experiments we observed that if we specialize $z_1=z_2=\ldots=z_n=1$ in the symmetrized function then 
we obtain the number of $(2n+1) \times (2n+1)$ Vertically Symmetric Alternating Sign Matrices. Next we aim to prove a generalization of this.

For this purpose we consider the following slight generalization of 
$\alpha(n;k_1,\ldots,k_n)$ for non-negative integers $m$:
$$
\alpha_m(n;k_1,\ldots,k_n)= \prod_{1 \le p < q \le n} (\id + E_{k_p} E_{k_q} + (X-2) E_{k_p}) \det_{1 \le i, j \le n} \left( \binom{k_i}{j-1 + m \, \delta_{j,n} } \right)
$$
In \cite{FischerNumberOfMT} it was shown that $\alpha_0(n;k_1,\ldots,k_n)=\alpha(n;k_1,\ldots,k_n)$
if $X=1$. For $k_1 \le t \le k_n$, choose $c_m \in \mathbb{C}$, almost all of them zero, 
such that the polynomial $\sum_{m=0}^{\infty} c_m \binom{x}{m}$ is $1$ if $x=t$ and $0$ if $x \in \{k_1,k_1+1,\ldots,k_n\} \setminus \{t\}$. In \cite{FischerSimplifiedProof} it was shown that 
\begin{equation}
\label{top}
\sum_{m=0}^{\infty} c_m \alpha_{m}(n;k_1,k_2,\ldots,k_n)
\end{equation}
is the generating function ($X$ is the variable) of Monotone Triangles $(a_{i,j})_{1 \le j \le i \le n}$ 
with bottom row $(k_1,k_2,\ldots,k_n)$ and  top entry $t$ with respect to the occurrences of the ``local pattern'' $a_{i+1,j} < a_{i,j} < a_{i+1,j+1}$. 
In fact, these patterns correspond to the $-1$s in the corresponding Alternating Sign Matrix if $(k_1,\ldots,k_n)=(1,2,\ldots,n)$.

\begin{proposition}
\label{eval1}
Fix integers $k_1,k_2,\ldots,k_n$ and a non-negative integer $m$, and define
\[
Q(z_1,\ldots,z_n) := \sym \left( \prod\limits_{i=1}^{n} z_i^{k_i} \prod\limits_{1 \le i < j \le n} \frac{1 + z_i z_j + (X-2) z_i}{z_j-z_i} \right).
\]
Then 
$$
\alpha_m(n;k_1,\ldots,k_n) = \left. Q(1,1,\ldots,1,E_{l}) \binom{l}{m}\right|_{l=0}.
$$
\end{proposition}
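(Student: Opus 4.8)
The plan is to transport both sides to the same coefficient‑extraction problem, using that $Q$ is a Laurent polynomial. Concretely, as recalled in the paragraph after Conjecture~\ref{conj}, $Q(z_1,\dots,z_n)=\prod_{1\le i<j\le n}(z_j-z_i)^{-1}\,\asym_z\!\big(\prod_i z_i^{k_i}\prod_{i<j}(1+z_iz_j+(X-2)z_i)\big)$ is a Laurent polynomial, so $Q(1,\dots,1,z_n)$ is a Laurent polynomial in $z_n$. Since $(1+y)^{a}=\sum_{m\ge0}\binom am y^m$ for $a\in\mathbb Z$ and $E_l^{a}\binom lm|_{l=0}=\binom am$, one gets
$$Q(1,\dots,1,E_l)\binom lm\Big|_{l=0}=\big[(z_n-1)^m\big]\,Q(1,\dots,1,z_n),$$
so it suffices to identify this coefficient with $\alpha_m(n;k_1,\dots,k_n)$.

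Next I would bring $\alpha_m$ into the same shape. Writing $\mu_j=j-1+m\,\delta_{j,n}$, the identity $\binom k\mu=[(z-1)^\mu]z^{k}$ (expand $z^k=(1+(z-1))^k$) together with multilinearity of the determinant in its columns gives $\det_{1\le i,j\le n}\binom{k_i}{\mu_j}=\big[\prod_j(z_j-1)^{\mu_j}\big]\det_{i,j}(z_j^{k_i})$. Now apply $\mathcal O:=\prod_{1\le p<q\le n}(\id+E_{k_p}E_{k_q}+(X-2)E_{k_p})$: it commutes with coefficient extraction in the $z$'s, and on a monomial $\prod_i z_{\sigma(i)}^{k_i}$ the shift $E_{k_p}$ acts as multiplication by $z_{\sigma(p)}$, so $\mathcal O$ acts as multiplication by $\prod_{p<q}(1+z_{\sigma(p)}z_{\sigma(q)}+(X-2)z_{\sigma(p)})$. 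Hence $\mathcal O\,\det_{i,j}(z_j^{k_i})=\asym_z\!\big(\prod_i z_i^{k_i}\prod_{i<j}(1+z_iz_j+(X-2)z_i)\big)=\prod_{1\le i<j\le n}(z_j-z_i)\cdot Q(z)$, and therefore
$$\alpha_m(n;k_1,\dots,k_n)=\Big[\prod_{j=1}^n(z_j-1)^{\mu_j}\Big]\Big(\prod_{1\le i<j\le n}(z_j-z_i)\cdot Q(z_1,\dots,z_n)\Big).$$

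Finally I would collapse the right-hand side of this. Put $v_j=z_j-1$; then $\prod_{i<j}(z_j-z_i)=\det_{i,j}(v_i^{j-1})=\sum_{\pi\in\mathcal{S}_n}\sgn\pi\prod_i v_i^{\pi(i)-1}$, while $\widetilde Q(v):=Q(1+v_1,\dots,1+v_n)$ is symmetric in the $v_i$ and — crucially — has no negative powers of any $v_i$ (each $(1+v_i)^{a}$ expands into nonnegative powers of $v_i$). The coefficient of $\prod_j v_j^{\mu_j}$ in $\big(\sum_\pi\sgn\pi\prod_i v_i^{\pi(i)-1}\big)\widetilde Q(v)$ equals $\sum_\pi\sgn\pi\,\big[\prod_i v_i^{\mu_i-\pi(i)+1}\big]\widetilde Q(v)$, and since $\mu_i=i-1$ for $i<n$ a nonzero term forces $\pi(i)\le i$ for $1\le i\le n-1$, hence $\pi=\id$. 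So everything collapses to the single term $\big[v_1^{0}\cdots v_{n-1}^{0}v_n^{m}\big]\widetilde Q(v)=[(z_n-1)^m]Q(1,\dots,1,z_n)$, which is exactly the expression obtained in the first step.

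The argument is essentially formal, and the one point that needs care is the collapse to $\pi=\id$ in the last step: it rests precisely on $\widetilde Q$ having no negative $v_i$-powers, i.e. on $Q$ being a genuine Laurent polynomial (the fact recalled after Conjecture~\ref{conj}), and on keeping the coefficient-extraction identities of the first two steps legitimate as formal power series in the $v_i$ even when some $k_i<0$. Everything else is bookkeeping with antisymmetrizers, so I expect no further obstacle.
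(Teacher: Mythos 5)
Your proof is correct, and it is essentially the paper's own argument translated from shift/difference-operator language into coefficient-extraction language: the dictionary $E_l^a\binom{l}{m}\big|_{l=0}\leftrightarrow[v^m](1+v)^a$ and $\Delta_l\leftrightarrow(v\cdot)$ turns your three steps into the paper's identities $\alpha_m=\asym P(E_{l_1},\dots,E_{l_n})\binom{l_1}{0}\cdots\binom{l_n}{n-1+m}\big|_{l=0}$, $\asym P=Q\cdot\prod(z_j-z_i)$, and the collapse of $\det\big(\Delta_{l_i}^{j-1}\big)$ acting on the staircase of binomials to the identity permutation. Both the transfer of the antisymmetrization and the final degree argument forcing $\pi=\id$ coincide with the paper's steps, so there is nothing further to check.
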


\begin{proof}
We set $P(z_1,\ldots,z_n)= \prod\limits_{i=1}^{n} z_i^{k_i} \prod\limits_{1 \le i < j \le n} ( 1 + z_i z_j + (X-2) z_i)$. Then 
\begin{align*}
\alpha_m(n;k_1,\ldots,k_n)&= \left. E_{l_1}^{k_1} \cdots E_{l_n}^{k_n} \alpha_{m}(n;l_1,\ldots,l_n) \right|_{l_1=\ldots=l_n=0} \\ &=
\left. P(E_{l_1},\ldots,E_{l_n}) \sum_{\sigma \in {\mathcal S}_n} \sgn \sigma \binom{l_{\sigma(1)}}{0} \ldots \binom{l_{\sigma(n-1)}}{n-2} 
\binom{l_{\sigma(n)}}{n-1+m} \right|_{l_1=\ldots=l_n=0}.
\end{align*}
With $P(z_1,\ldots,z_n) = \sum_{i_1,i_2,\ldots,i_n} p_{i_1,i_2,\ldots,i_n} z_1^{i_1} z_2^{i_2} \cdots z_n^{i_n}$, this is equal to 
\begin{multline*}
\sum_{\sigma \in \mathcal{S}_n, i_1,i_2,\ldots,i_n} \sgn \sigma \, p_{i_1,i_2,\ldots,i_n} \binom{i_{\sigma(1)}}{0} 
\ldots \binom{i_{\sigma(n-1)}}{n-2} 
\binom{i_{\sigma(n)}}{n-1+m} \\
= \left. \sum_{\sigma \in \mathcal{S}_n, i_1,i_2,\ldots,i_n} \sgn \sigma \,
p_{i_1,i_2,\ldots,i_n} E_{l_1}^{i_{\sigma(1)}} \ldots E_{l_n}^{i_{\sigma(n)}}
\binom{l_1}{0} \ldots \binom{l_{n-1}}{n-2} \binom{l_n}{n-1+m} \right|_{l_1=\ldots=l_n=0}
\\
= \left. \sum_{\sigma \in \mathcal{S}_n, i_1,i_2,\ldots,i_n} \sgn \sigma \,
p_{i_1,i_2,\ldots,i_n} E_{l_{\sigma^{-1}(1)}}^{i_1} \ldots E_{l_{\sigma^{-1}(n)}}^{i_n}
\binom{l_1}{0} \ldots \binom{l_{n-1}}{n-2} \binom{l_n}{n-1+m} \right|_{l_1=\ldots=l_n=0}
\\
= \left. \asym P(E_{l_1},\ldots,E_{l_n}) \binom{l_1}{0} \ldots \binom{l_{n-1}}{n-2} \binom{l_n}{n-1+m} \right|_{l_1=\ldots=l_n=0}.
\end{multline*}
By definition, 
$$
\asym P(z_1,\ldots,z_n) = Q(z_1,\ldots,z_n) \prod_{1 \le i < j \le n} (z_j-z_i).
$$
Now we can conclude that
\begin{align*}
&\alpha_m(k_1,\ldots,k_n) = \left. Q(E_{l_1},E_{l_2},\ldots,E_{l_n}) \prod_{1 \le i < j \le n} (E_{l_j} - E_{l_i}) \binom{l_1}{0} \ldots \binom{l_{n-1}}{n-2}  \binom{l_n}{n-1+m} \right|_{l_1=\ldots=l_n=0} \\
&\quad=
 \left. Q(E_{l_1},E_{l_2},\ldots,E_{l_n}) \prod_{1 \le i < j \le n} (\Delta_{l_j} - \Delta_{l_i}) \binom{l_1}{0} \ldots \binom{l_{n-1}}{n-2} \ldots \binom{l_n}{n-1+m} \right|_{l_1=\ldots=l_n=0} \\
 &\quad=
 \left. Q(E_{l_1},E_{l_2},\ldots,E_{l_n}) \det_{1 \leq i,j \leq n} \left( \Delta_{l_i}^{j-1} \right) \binom{l_1}{0} \ldots \binom{l_{n-1}}{n-2} \ldots \binom{l_n}{n-1+m} \right|_{l_1=\ldots=l_n=0} \\
&\quad=
 \left. Q(E_{l_1},E_{l_2},\ldots,E_{l_n}) \sum_{\sigma \in \mathcal{S}_n} \sgn \sigma \Delta_{l_1}^{\sigma(1)-1} \ldots \Delta_{l_n}^{\sigma(n)-1} \binom{l_1}{0} \ldots  \binom{l_{n-1}}{n-2} \binom{l_n}{n-1+m} \right|_{l_1=\ldots=l_n=0} \\
&\quad=\left. Q(1,1,\ldots,1,E_{l_n}) \binom{l_n}{m} \right|_{l_n=0}, 
\end{align*} 
since $\Delta_{l_1}^{\sigma(1)-1} \ldots \Delta_{l_n}^{\sigma(n)-1} \binom{l_1}{0} \ldots  \binom{l_{n-1}}{n-2} \binom{l_n}{n-1+m}=0$ except when $\sigma=\id$.
\end{proof}

\begin{corollary} 
\label{cor_specialised}
Let $k_1,k_2,\ldots,k_n,m$ and $Q(z_1,\ldots,z_n)$ 
be as in Proposition~\ref{eval1}. Then the coefficient of $z^t X^k$ in 
$Q(1,1,\ldots,1,z)$ is the number of Monotone Triangles $(a_{i,j})_{1 \le j \le i \le n}$ with bottom row $k_1,k_2,\ldots,k_n$, top entry $t$ and $k$  occurrences of the local pattern   $a_{i+1,j} < a_{i,j} < a_{i+1,j+1}$. 
\end{corollary}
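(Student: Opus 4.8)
The plan is to combine Proposition~\ref{eval1} with the generating function interpretation \eqref{top} and a short degree argument. As the combinatorial statement tacitly requires, assume $k_1 < k_2 < \cdots < k_n$, and write $Q(1,1,\ldots,1,z) = \sum_{t} q_t(X)\,z^t$, a Laurent polynomial in $z$ whose coefficients $q_t(X)$ are polynomials in $X$. Since $Q(1,\ldots,1,E_l)\binom{l}{m}\big|_{l=0} = \sum_t q_t(X)\binom{t}{m}$, Proposition~\ref{eval1} yields
$$
\alpha_m(n;k_1,\ldots,k_n) = \sum_{t} q_t(X)\,\binom{t}{m} \qquad \text{for every } m \ge 0 .
$$
Hence it suffices to prove that $q_t(X)$ is the generating function, in the variable $X$, of Monotone Triangles with bottom row $(k_1,\ldots,k_n)$ and top entry $t$, counted by occurrences of the local pattern $a_{i+1,j} < a_{i,j} < a_{i+1,j+1}$.

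The first step is a support lemma: $q_t(X) = 0$ whenever $t \notin [k_1,k_n]$. For this I would return to the identity $\asym P(z_1,\ldots,z_n) = Q(z_1,\ldots,z_n)\prod_{1 \le i < j \le n}(z_j - z_i)$ established in the proof of Proposition~\ref{eval1}, where $P(z_1,\ldots,z_n) = \prod_{i=1}^{n} z_i^{k_i}\prod_{1 \le i < j \le n}(1 + z_iz_j + (X-2)z_i)$, and bound the exponents of one variable, say $z_n$. In each permuted summand $P(z_{\sigma(1)},\ldots,z_{\sigma(n)})$ of $\asym P$, the variable $z_n$ carries the exponent $k_{\sigma^{-1}(n)}$ from the monomial prefactor, together with a contribution from the $n-1$ factors $1 + z_iz_j + (X-2)z_i$ that involve $z_n$, each of which is a polynomial of degree at most $1$ in $z_n$ with a nonzero constant term; so every $z_n$-exponent occurring in $\asym P$ lies in $[k_1,\,k_n+n-1]$. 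On the other hand $\prod_{1\le i<j\le n}(z_j - z_i) = \prod_{i=1}^{n-1}(z_n - z_i)\cdot\prod_{1\le i<j\le n-1}(z_j - z_i)$ is, as a polynomial in $z_n$, of degree exactly $n-1$ with a nonzero, $z_n$-free constant term. Over the integral domain of Laurent polynomials in $z_n$ with coefficients in $\mathbb{C}(z_1,\ldots,z_{n-1})(X)$, the extreme degrees add under the exact division $Q = \asym P / \prod_{i<j}(z_j-z_i)$, so the top $z_n$-exponent of $Q$ is at most $(k_n+n-1)-(n-1) = k_n$ and its bottom $z_n$-exponent is at least $k_1$. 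Since $Q$ is symmetric, the same bounds hold for every variable; setting $z_1 = \cdots = z_{n-1} = 1$ then kills every monomial $z^t$ with $t \notin [k_1,k_n]$, which is the lemma.

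With the support lemma the rest is immediate. Fix $t$ with $k_1 \le t \le k_n$ and choose constants $c_m$, almost all zero, with $\sum_{m} c_m\binom{x}{m}$ equal to $1$ at $x = t$ and to $0$ at the remaining integers of $[k_1,k_n]$, exactly as in the paragraph preceding \eqref{top}. Then
$$
\sum_{m} c_m\,\alpha_m(n;k_1,\ldots,k_n) = \sum_{t'} q_{t'}(X)\sum_{m} c_m\binom{t'}{m} = q_t(X),
$$
because the inner sum is $\delta_{t',t}$ for $t' \in [k_1,k_n]$ while $q_{t'}(X) = 0$ for $t' \notin [k_1,k_n]$. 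By \eqref{top} the left-hand side is the generating function, in $X$, of Monotone Triangles with bottom row $(k_1,\ldots,k_n)$ and top entry $t$, counted by occurrences of the local pattern, so $q_t(X)$ equals that generating function; extracting the coefficient of $X^k$ gives Corollary~\ref{cor_specialised} for $k_1 \le t \le k_n$, and for $t$ outside this range both $q_t(X)$ and the corresponding count of Monotone Triangles vanish.

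The binomial bookkeeping and the choice of the interpolation constants $c_m$ are routine; the one point deserving care — and therefore the main obstacle — is the support lemma, precisely the assertion that dividing $\asym P$ by the Vandermonde product does not push $z_n$-exponents outside $[k_1,k_n]$. That works cleanly here only because $\prod_{i=1}^{n-1}(z_n - z_i)$ is monic of degree $n-1$ in $z_n$ with invertible (over $\mathbb{C}(z_1,\ldots,z_{n-1})(X)$) constant term $(-1)^{n-1}z_1\cdots z_{n-1}$, so in the exact division the extreme $z_n$-degrees subtract exactly; for a bottom row that is not strictly increasing one would additionally have to make sense of what ``bottom row $(k_1,\ldots,k_n)$'' should even mean.
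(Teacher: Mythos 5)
Your argument is correct and follows the same skeleton as the paper's proof: apply Proposition~\ref{eval1}, sum against the interpolation coefficients $c_m$, and identify the result with the generating function \eqref{top}. The one substantive difference is your support lemma. The paper's proof passes from $\sum_{s,k} b_{s,k}X^k\sum_m c_m\binom{s}{m}$ to $\sum_k b_{t,k}X^k$ by invoking ``the choice of the coefficients $c_m$,'' but those coefficients only guarantee $\sum_m c_m\binom{s}{m}=\delta_{s,t}$ for integers $s$ in $\{k_1,k_1+1,\ldots,k_n\}$ (a nonzero polynomial cannot vanish at all integers outside that range), so the step tacitly uses that $Q(1,\ldots,1,z)$ has no $z$-exponents outside $[k_1,k_n]$. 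You make this explicit and prove it by bounding the $z_n$-exponents of $\asym P$ by $[k_1,k_n+n-1]$ and observing that exact division by $\prod_{i<j}(z_j-z_i)$, which has $z_n$-degree exactly $n-1$ and nonzero $z_n$-constant term over $\mathbb{C}(z_1,\ldots,z_{n-1})(X)$, subtracts the extreme degrees; this argument is sound. Your lemma is also what makes the corollary meaningful for $t\notin[k_1,k_n]$, where both sides must vanish. So: same route, but you supply a degree argument that the paper's proof needs and omits.
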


\begin{proof} We fix $t$ and observe that the combination of \eqref{top} and Proposition~\ref{eval1} implies that 
$$
\sum_{m \ge 0} c_m \left. Q(1,1,\ldots,1,E_l) \binom{l}{m} \right|_{l=0}
$$ 
is the generating function described after \eqref{top}.
Now, if we suppose 
$$
Q(1,1,\ldots,1,z) = \sum_{s,k} b_{s,k} z^s X^k, 
$$
then we see that this generating function is equal to 
\begin{multline*}
\sum_{m \ge 0} c_m  \left. \sum_{s,k} b_{s,k} E_l^s X^k \binom{l}{m} \right|_{l=0} 
= \sum_{m \ge 0} c_m  \sum_{s,k} b_{s,k} X^k \binom{s}{m} 
= \sum_{s,k} b_{s,k} X^k \sum_{m \ge 0} c_m \binom{s}{m}  \\
= \sum_{s,k} b_{s,k} X^k \delta_{s,t}   = \sum_{k} b_{t,k} X^k,
\end{multline*}
where the third equality follows from the choice of the coefficients $c_m$.
\end{proof}

A short calculation shows that $\sym$ applied to \eqref{original} is equal to $\prod\limits_{i=1}^{n} z_i^{-n+1} Q(z_1,\ldots,z_n)$ if we set $k_i=2(i-1)$ and $X=1$ in $Q(z_1,\ldots,z_n)$. If we also specialize $z_1=\dots=z_{n-1}=1$, then Conjecture~\ref{conj} implies $Q(1,\ldots,1,z)= z^{2n-2} Q(1,\ldots,1,z^{-1})$. However, by Corollary~\ref{cor_specialised}, this is just the trivial fact that the number of Monotone Triangles with bottom row 
$(0,2,4,\ldots,2n-2)$ and top entry $t$ is equal to the number of Monotone Triangles with bottom row $(0,2,4,\ldots,2n-2)$ and top entry $2n-2-t$, or, equivalently, that the number of $(2n+1) \times (2n+1)$ Vertically Symmetric Alternating Sign Matrices with a $1$ in position $(t,1)$ equals the number of 
$(2n+1) \times (2n+1)$ Vertically Symmetric Alternating Sign Matrices with a $1$ in position $(2n+1-t,1)$. So in the special case $s=0$, Conjecture~\ref{conj} is a generalization of this obvious symmetry.  

Finally, we want to remark that the symmetrized functions under consideration in Proposition~\ref{eval1} can easily be computed recursively. For instance, considering the case of Vertically Symmetric Alternating Sign Matrices, let 
$$
\vs(X;z_1,\ldots,z_n) = \sym \prod_{i=1}^{n} z_i^{2i-2} 
\prod_{1 \le i < j \le n} \frac{1+z_i (X-2) + z_i z_j}{z_j-z_i}.
$$
Then 
$$
\vs(X;z_1,\ldots,z_n)=\sum_{j=1}^{n} z_j^{2n-2} \prod_{1 \le i \le n, i \not= j}
\frac{1+ z_i (X-2) + z_i z_j}{z_j - z_i} \vs(X;z_1,\ldots,\widehat{z_j},\ldots,z_n).
$$
Similarly, in the case of ordinary Alternating Sign Matrices, let 
$$
\asm(X;z_1,\ldots,z_n) = \sym \prod_{i=1}^{n} z_i^{i-1} 
\prod_{1 \le i < j \le n} \frac{1+z_i (X-2) + z_i z_j}{z_j-z_i}.
$$
Then 
$$
\asm(X;z_1,\ldots,z_n)=\sum_{j=1}^{n} z_j^{n-1} \prod_{1 \le i \le n, i \not= j}
\frac{1+ z_i (X-2) + z_i z_j}{z_j - z_i} \asm(X;z_1,\ldots,\widehat{z_j},\ldots,z_n).
$$
Let us conclude by mentioning that in order to reprove the formula for the number of Vertically Symmetric Alternating Sign Matrices of given size,
it would suffice to compute 
$\vs(1;1,1,\ldots,1)$, while the ordinary Alternating Sign Matrix Theorem is equivalent to computing 
$\asm(1;1,1,\ldots,1)$.

\bibliographystyle{alpha}
\bibliography{bib130314}
\end{document}